\definecolor{darkgreen}{rgb}{0.0, 0.5, 0.0}
\newtheorem{thm}{Theorem}[section]
\newtheorem{lem}[thm]{Lemma}
\newtheorem{rem}[thm]{Remark}
\newtheorem{Exp}{Example}[section]
\newcommand{\bd}[1]{\boldsymbol{\bold{#1}}}
\newcommand{\bdbar}[1]{\bd{\overline #1}}
\newcommand{\bdtilde}[1]{\bd{\widetilde #1}}
\newcommand{\bdhat}[1]{\bd{\widehat #1}}
\newcommand{\abs}[1]{\left\vert#1\right\vert}
\def\iph{{i+\frac{1}{2}}} % i plus half
\def\ipf{{i+\frac{1}{4}}} % i plus one fourth
\def\imf{{i-\frac{1}{4}}} % i minus one fourth
\def\imh{{i-\frac{1}{2}}} % i minus half
\def\ipmh{{i\pm\frac{1}{2}}} % i minus half
\def\jpmh{{j\pm\frac{1}{2}}} % i minus half
\def\jph{{j+\frac{1}{2}}}
\def\jmh{{j-\frac{1}{2}}}
\def\Hat{\widehat}
\def\Tilde{\widetilde}
\def\Bar{\overline}
\def\ij{{i,j}}
\def\ijph{{\iph,\jph}}
\def\ijmh{{\imh,\jmh}}
\def\dx{{\rm d}x}
\def\dy{{\rm d}y}
\def\DT{\Delta t}
\def\DX{\Delta x}
\def\DY{\Delta y}
\begin{document}
	
	\large

%=============  title  ==============
\headers{}{}

\title{Provably positivity-preserving, globally divergence-free central DG methods for ideal MHD system
\thanks{	\normalsize
			The authors were partially supported by Shenzhen Science and Technology Program (Grant No.~RCJC20221008092757098) and National Natural Science Foundation of China (Grant No.~12171227).
R.~Yan was also partially supported by Postdoctoral Fellowship Programs of CPSF (Grant Nos.~2025T180848, 2024M761274, and GZB20240293).}}

\author{	\normalsize Ruifang Yan\thanks{	\normalsize
		Department of Mathematics and Shenzhen International Center for Mathematics, Southern University of Science and Technology, Shenzhen 518055, China (\email{yanrf@sustech.edu.cn}).}
	\and Huihui Cao\thanks{	\normalsize
       Shenzhen International Center for Mathematics and Department of Mathematics, Southern University
of Science and Technology, Shenzhen 518055, China (\email{caohh@sustech.edu.cn}).}
	\and Kailiang Wu\thanks{\normalsize Corresponding author. Department of Mathematics, Shenzhen International Center for Mathematics, and Guangdong Provincial Key Laboratory of Computational Science and Material Design, Southern University of Science and Technology, Shenzhen 518055, China (\email{wukl@sustech.edu.cn}).}}

\maketitle
\begin{abstract}
	\normalsize
The compressible MHD equations possess two essential structural properties: (i) an algebraic structure ensuring the positivity of density and pressure; and (ii) a differential structure maintaining the divergence-free (DF) constraint on the magnetic field. A deep connection between these two properties has been revealed in both non-central [K.~Wu, {\em SIAM J.~Numer.~Anal.}, 56 (2018), pp.~2124--2147] and central discontinuous Galerkin (CDG) frameworks [K.~Wu, H.~Jiang \& C.-W.~Shu, {\em SIAM J.~Numer.~Anal.}, 61 (2023), pp.~250--285]. However, existing methods could provably preserve positivity \textbf{\textit{only}} in conjunction with a \textbf{\textit{locally}} DF property.
Constructing a uniformly high-order method that is both {\em provably positive} and \textbf{\textit{globally DF}} has remained a challenging problem.

This paper proposes a numerical method, termed {\em PosDiv-CDG}, that provably preserves both positivity and the global DF condition at arbitrarily high order in multiple dimensions. It resolves the fundamental structural incompatibility between standard positivity-preserving limiters and global DF enforcement in the CDG framework. The method integrates a novel positivity-limiting strategy, a modified dissipation mechanism guided by convex decomposition, and an auxiliary evolution equation for the magnetic field, which are designed based on rigorous theoretical analysis.
Notably, we provide a rigorous proof of positivity preservation for the updated cell averages under an explicit CFL-type condition. The proof leverages the geometric quasi-linearization (GQL) technique, which reformulates the nonlinear positivity constraint into an equivalent linear form. This enables the derivation of flux-based inequalities and technical estimates under the global DF constraint. To suppress nonphysical oscillations near shocks, we develop a compact, non-intrusive convex-oscillation-suppressing (COS) procedure based on the entropy function. The COS process acts only on non-magnetic variables, avoids costly characteristic decomposition, and maintains both the globally DF property and high-order accuracy. Several challenging experiments---including low plasma-beta MHD jets with Mach numbers up to {\bf 1,000,000}---demonstrate the proposed method robustness, high-order accuracy, non-oscillatory behavior, and its ability to preserve both positivity and globally DF structures under extreme conditions.

\end{abstract}

\begin{keywords}
\normalsize
Compressible Magnetohydrodynamics;
Central Discontinuous Galerkin; 
Globally Divergence-Free;
Positivity-Preserving;
Hyperbolic Conservation Laws

\end{keywords}

\begin{AMS}
	\normalsize
	35L65, 65M12, 65M60, 76W05
\end{AMS}

\section{Introduction}\label{sec:introduction}

More than 99\% of the visible matter in the universe exists in the plasma state. From stars like the Sun to the vast interstellar medium, plasma---an ionized, highly conductive fluid---dominates the cosmos. Its motion induces electric currents that generate magnetic fields, which in turn act back on the plasma through Lorentz forces, leading to rich and highly nonlinear interactions.
The dynamics of an inviscid, perfectly conducting plasma are governed by the equations of ideal compressible  magnetohydrodynamics (MHD). In a $d$-dimensional spatial domain ($d = 1, 2, 3$), the ideal MHD system can be written in conservative form as
\begin{align}\label{eqn:HCL}
	\frac{\partial \mathbf{U}}{\partial t} + \sum_{\ell=1}^d \frac{\partial \mathbf{F}_\ell(\mathbf{U})}{\partial x_\ell} = \mathbf{0}.
\end{align}
Here, $\mathbf{U} = (\rho, \bm{m}, \bm{B}, E)^\top \in \mathbb{R}^8$ is the vector of conservative variables: $\rho$ is the mass density, $\bm{m} = (m_1, m_2, m_3)$ the momentum, $\bm{B} = (B_1, B_2, B_3)$ the magnetic field, and
$
E = \rho e + \frac{1}{2} \rho |\bm{u}|^2 + \frac{1}{2} |\bm{B}|^2
$
the total energy, with $e$ denoting the specific internal energy. The flux in the $x_\ell$-th spatial direction is
\begin{align*}
	\mathbf{F}_\ell(\mathbf{U}) = \big(
	m_\ell,\,
	m_\ell \bm{u} - B_\ell \bm{B} + p_{\text{tot}} \bm{e}_\ell,\,
	u_\ell \bm{B} - B_\ell \bm{u},\,
	u_\ell(E + p_{\text{tot}}) - B_\ell (\bm{u} \cdot \bm{B})
	\big)^\top,
\end{align*}
where $\bm{u} = \bm{m} / \rho$ is the fluid velocity, and $\bm{e}_\ell$ the $\ell$-th Cartesian unit vector. The total pressure is $p_{\text{tot}} = p + p_m$, with $p$ the thermal pressure and $p_m = \frac{1}{2} |\bm{B}|^2$ the magnetic pressure. To close the system, an equation of state (EOS) $p = p(\rho, e)$ is required. We assume that for any $\rho > 0$, $e > 0$ if and only if $p(\rho, e) > 0$.

The ideal MHD system \eqref{eqn:HCL} possesses two fundamental physical structures that are essential for both its mathematical well-posedness and physical validity:
\begin{itemize}[leftmargin=1.5em]
	\item \textbf{Algebraic Structure.} The density and internal energy (or pressure) must remain positive:
	\begin{align}\label{PP}
		\rho > 0, \qquad \rho e = E - \frac{|{\bm m}|^2}{2\rho} - \frac{ |{\bm B}|^2 }{2} > 0,
	\end{align}
	which is essential for the system to be hyperbolic and well-posed.
	\item \textbf{Differential Structure.} The magnetic field should satisfy the divergence-free (DF) constraint:
	\begin{align}\label{DF}
		\nabla \cdot \bm{B} := \sum_{\ell=1}^d \frac{\partial B_\ell}{\partial x_\ell} = 0,
	\end{align}
	which reflects the physical principle of the absence of magnetic monopoles.
\end{itemize}
These properties are preserved by the exact solution of \eqref{eqn:HCL}, provided they hold initially. However, maintaining them in numerical simulations---especially in multiple dimensions ($d \geq 2$)---remains a major challenge, as violations can cause numerical instability, nonphysical results, or even simulation failure \cite{evans1988simulation,balsara1999staggered,toth2000b,dedner2002hyperbolic,li2005locally}.

Over the years, many methods have been proposed to handle the DF condition \eqref{DF}, including but not limited to the projection method \cite{brackbill1980effect}, constrained transport~\cite{evans1988simulation, balsara2004second, arminjon2005central}, eight-wave approach \cite{powell1994approximate, powell1995upwind}, hyperbolic divergence cleaning \cite{dedner2002hyperbolic}, and locally or globally DF methods~\cite{balsara2001divergence, li2005locally, balsara2009divergence, li2011central, li2012arbitrary, xu2016new, fu2018globally, balsara2021globally,liu2025globally}. In contrast, designing positivity-preserving (PP) schemes that respect the algebraic structure has long been a major difficulty. Early efforts focused on building robust one-dimensional Riemann solver~\cite{janhunen2000positive,bouchut2007multiwave,bouchut2010multiwave,klingenberg2010relaxation}, followed by the second-order PP MUSCL--Hancock scheme \cite{waagan2009positive,waagan2011robust}.
While notable efforts have been made to extend high-order PP limiters from compressible Euler equations \cite{zhang2010positivity,xu2014parametrized,xiong2016parametrized} to the ideal MHD setting \cite{balsara2012self,cheng2013positivity,christlieb2015positivity,christlieb2016high}, the positivity of the resulting high-order schemes was mainly justified through numerical experiments with limited theoretical evidence, and a rigorous and complete proof of positivity for high-order MHD schemes was unavailable, particularly in multidimensional cases. In fact, seeking {\em provably}  PP schemes for MHD remained an open problem for years, even for first-order schemes. As explicitly noted in \cite{christlieb2015positivity}, ``there is still no rigorous proof that the Lax--Friedrichs scheme or any other first-order scheme is PP in the multi-D case...''.

An interesting and somewhat surprising discovery is that two seemingly distinct structures---the positivity constraint (a pointwise algebraic condition) and the DF condition (a differential constraint)---are intrinsically connected \cite{WuSINUM2018,WuTangM3AS,wu2023provably}. A central challenge in designing structure-preserving MHD schemes lies in the limited understanding of this algebraic-differential coupling. Bridging this gap was highly nontrivial. However, theoretical advances in \cite{WuSINUM2018} have begun to reveal a deep and consequential interplay between the two, thanks to the geometric quasi-linearization (GQL) approach \cite{WuTangM3AS,WuSINUM2018,wu2023geometric}, which equivalently transforms nonlinear algebraic constraints into linear ones. In particular, it was rigorously proven in \cite{WuSINUM2018} that for conservative MHD schemes, the PP property depends critically on satisfying a discrete DF condition \cite{WuSINUM2018}. Even minor violations of this condition can lead to a loss of pressure positivity---not only in numerical solutions \cite{WuSINUM2018}, but even in smooth exact solutions of the continuous system \cite{wu2018provably}. This highlights a strong and nonlinear connection between these two structural properties and suggests that the PP property can only be achieved in a DF framework.

While several existing methods~\cite{balsara2001divergence,balsara2009divergence,li2011central,li2012arbitrary,xu2016new,fu2018globally,balsara2021globally,liu2025globally} aim to enforce a globally DF magnetic field,\footnote{A numerical magnetic field is said to be globally DF if it is locally DF within each cell and its normal component is continuous across cell interfaces.} they often conflict with traditional PP limiters. The local scaling nature of such limiters can disrupt the globally DF property. This incompatibility has, to date, prevented the construction of a scheme that simultaneously satisfies both properties. To our knowledge, \emph{no existing high-order numerical method is both provably PP and globally DF for ideal MHD}.
Recent research has instead focused on developing high-order schemes that preserve PP while enforcing only a \textbf{locally or discretely} DF condition~\cite{wu2018provably,wu2019provably,wu2023provably,ding2023new,liu2025structure}. A key observation in~\cite{wu2019provably} is that the symmetrizable MHD formulation, which incorporates the Godunov--Powell source terms, admits exact solutions that remain inherently PP, even when the DF condition is not strictly satisfied. Building on this insight, it has been shown that carefully designed discretizations of the Godunov--Powell formulation can yield arbitrarily high-order, provably PP schemes that rely only on a locally DF condition---naturally compatible with local PP limiters~\cite{wu2018provably,wu2019provably,ding2023new}. This opens a promising path toward the construction of provably PP and locally DF MHD schemes.
Yet, the key open question remains:
\begin{quote}
	\emph{Can one design a uniformly high-order scheme that is both provably PP and \textbf{globally} DF?}
\end{quote}

This work makes the first rigorous attempt to resolve this fundamental problem within the central discontinuous Galerkin (CDG) framework, by developing an arbitrarily high-order structure-preserving CDG method, referred to as the {\em PosDiv-CDG method}. The CDG framework~\cite{liu2007central}, a variant of the classical DG method~\cite{cockburn1989tvb2,cockburn1998runge}, retains essential features such as high-order accuracy, compactness, and parallel efficiency. By utilizing overlapping primal-dual meshes, CDG methods eliminate the need for Riemann solvers and allow for larger time steps~\cite{liu20082,reyna2015operator}. These features have facilitated the development of structure-preserving CDG methods for scalar conservation laws~\cite{li2016maximum}, relativistic hydrodynamics~\cite{wu2016physical}, shallow water equations~\cite{li2017positivity}, and Euler equations with gravitation~\cite{jiang2022positivity}. Recent advances~\cite{wu2023provably,ding2024gql} have uncovered a fundamental connection between the PP property of CDG methods and a discrete DF condition on the dual mesh, enabling the construction of high-order CDG schemes that are provably PP and locally DF. Another important development is due to Li and Xu~\cite{li2011central,li2012arbitrary}, who systematically constructed globally DF CDG methods of arbitrary order by discretizing the normal components of the magnetic field on cell interfaces and reconstructing globally DF fields within each cell. However, ensuring positivity preservation in such a globally DF framework has remained a major unresolved challenge. This work overcomes the challenge through a series of algorithmic and theoretical innovations, summarized as follows:

\begin{itemize}[leftmargin=*]

	\item \textbf{Provably PP and globally DF scheme:} We present the first numerical method that simultaneously ensures provably PP and globally DF properties for multidimensional compressible MHD at arbitrarily high order. This resolves the intrinsic incompatibility between traditional PP limiters and globally DF magnetic fields. The PosDiv-CDG method integrates a divergence-compatible PP limiter, an entropy-based convex-oscillation-suppressing (COS) procedure, modified dissipation guided by cell average decomposition, and auxiliary magnetic field evolution, which are developed through rigorous PP analysis.
	
	\item \textbf{Divergence-compatible PP limiter:} A new PP limiting strategy is introduced, which modifies conservative variables while preserving the globally DF structure of the magnetic field. This is achieved by transforming the limited point values into primitive variables and subsequently restoring the globally DF magnetic components---thus resolving the structural conflict between positivity and globally DF preservation.

	\item \textbf{Entropy-based oscillation elimination:} To suppress spurious oscillations while preserving the globally DF structure, we develop a compact, non-intrusive COS procedure based on the entropy function. The COS technique acts only on non-magnetic variables, avoids expensive characteristic decomposition, and preserves the globally DF property, high-order accuracy, and high resolution.
	
	\item \textbf{Modified dissipation for cell average positivity:} Since PP-limited point values generally do not correspond to a single polynomial, we introduce a careful modification of the CDG dissipation terms associated with the zeroth moment. By decomposing cell averages and utilizing limited point values, we preserve high-order accuracy while ensuring provable positivity of the updated cell averages.
	
	\item \textbf{Auxiliary magnetic field averages:} To address the difficulty that the globally DF reconstructed magnetic field may not ensure admissible cell average of the full conservative state, we evolve auxiliary cell averages of the magnetic field using modified CDG updates. These auxiliary quantities allow rigorous control over the full state vector and guarantee that the evolved averages remain within the admissible set.
	
	\item \textbf{Rigorous positivity proof:} We provide a highly nontrivial, rigorous proof that the method preserves positivity of updated cell averages under a CFL condition. The proof employs the geometric quasi-linearization (GQL) framework~\cite{WuSINUM2018,wu2023geometric}, which reformulates the nonlinear positivity constraint into an equivalent linear form, enabling the use of flux inequalities and technical estimates under the global DF condition.
	
	\item \textbf{Robust validation under extreme conditions:} Several challenging examples, including MHD jets with Mach numbers reaching up to \textbf{1,000,000},  demonstrate the method’s high-order accuracy, non-oscillatory behavior, robustness, and ability to preserve both positivity and DF structures under extreme, shock-dominated, highly magnetized conditions.
\end{itemize}

The remainder of this paper is organized as follows. In \Cref{sec:GDF-CDG}, we briefly review the globally DF CDG method for the 2D ideal MHD equations~\eqref{eqn:HCL}, originally proposed by Li and Xu~\cite{li2011central,li2012arbitrary} . \Cref{sec:PPGDF-CDG} details the challenges of achieving provable positivity within the globally DF CDG framework, and presents the proposed PosDiv-CDG scheme, which incorporates the entropy-induced COS procedure, the PP limiting procedure compatible with the global DF conditions, the cell average decomposition with modified dissipation terms, the evolution of auxiliary magnetic field averages, and the rigorous PP analysis. Numerical experiments demonstrating the accuracy and robustness of the proposed scheme are presented in \Cref{sec:exp}, followed by concluding remarks in \Cref{sec:con}.

\section{Brief review of globally DF CDG method}\label{sec:GDF-CDG}

This section reviews the globally DF CDG method in \cite{li2011central,li2012arbitrary} for the two-dimensional (2D) ideal MHD equations.

The conservative vector $\bd U$ is split into two parts:
$
	\bd R = (\rho,m_1,m_2,m_3,B_3,E)^\top$ and
	$\bd Q = (B_1, B_2)^\top$.
Accordingly, the flux vector $\bd F_\ell$ is decomposed into $\bd F_\ell^{\bd R}(\bd U)$ and $\bd F_\ell^{\bd Q}(\bd U)$. Consequently, the 2D MHD equations \eqref{eqn:HCL} can be rewritten as
\begin{align}
	& \frac{\partial\bd R}{\partial t} + \frac{\partial\bd F_1^{\bd R}(\bd U)}{\partial x}
	+ \frac{\partial\bd F_2^{\bd R}(\bd U)}{\partial y} = {\bd 0}, \label{eq:R} \\
	& \frac{\partial\bd Q}{\partial t} + \frac{\partial\bd F_1^{\bd Q}(\bd U)}{\partial x}
	+ \frac{\partial\bd F_2^{\bd Q}(\bd U)}{\partial y} = {\bd 0}. \label{eq:Q}
\end{align}

Assume that the computational domain $\Omega = [x_{\text{min}},x_{\text{max}}] \times [y_{\text{min}},y_{\text{max}}]$ is covered by two overlapping Cartesian meshes: the primal mesh $\mathcal{K}_h^C = \{ C_{i,j} \}$ and the dual mesh $\mathcal{K}_h^D = \{ D_{i+\frac{1}{2},j+\frac{1}{2}} \}$. The primal and dual cells are defined by
$
C_{i,j} = [x_{i-\frac{1}{2}},x_{i+\frac{1}{2}}] \times [y_{j-\frac{1}{2}},y_{j+\frac{1}{2}}]$ and
$D_{i+\frac{1}{2},j+\frac{1}{2}} = [x_i,x_{i+1}] \times [y_j,y_{j+1}].
$
The mesh sizes $\Delta x$ and $\Delta y$ are assumed to be uniform in the $x$- and $y$-directions, respectively.
The CDG method seeks two approximate solutions, denoted by $\bd U_{h}^\sharp(x,y,t)$, where $\sharp$ stands for either $C$ or $D$, defined on the primal mesh $\mathcal{K}_h^C$ and the dual mesh $\mathcal{K}_h^D$, respectively.
Accordingly, $\bd R_{h}^{\sharp}(x,y,t)$ and $\bd Q_{h}^{\sharp}(x,y,t)$ denote the CDG approximations to $\bd R$ and $\bd Q$.
For simplicity, we focus on the forward Euler method for time discretization, although high-order strong-stability-preserving (SSP) methods can also be used.

\subsection{Standard CDG discretization for $\bold R$}

Since equations \eqref{eq:R} do not involve the DF constraint \eqref{DF}, they can be discretized using
the standard CDG method \cite{liu2007central,li2011central}.
The finite element space for approximating $\bd R$ is defined by
$
	\mathbb{V}_h^{\sharp,k}
	:= \left\{ {\bm w} \,:\, {\bm w}|_K \in [\mathbb{P}^k(K)]^6 \quad \forall K \in \mathcal{K}_h^\sharp \right\},
$
where $\mathbb{P}^k(K)$ denotes the space of polynomials of total degree at most $k$ on cell $K$.

Given the CDG solutions $\bd U_{h}^{C,n}$ and $\bd U_{h}^{D,n}$ at time $t = t^n$, the updated solutions $\bd R_{h}^{C,n+1}$ and $\bd R_{h}^{D,n+1}$ at $t^{n+1} = t^n + \Delta t$ are computed as follows:
\begin{equation}\label{eqn:cdg-c}
		\resizebox{0.91\hsize}{!}{$
	 \begin{aligned}
		& \int_{C_\ij} \bd R_{h}^{C,n+1} {\phi} {\rm d}x{\rm d}y
		= \int_{C_\ij} \bd R_{h}^{C,n}  {\phi} {\rm d}x{\rm d}y
		+ \theta    {\bm d}_{ij} (\phi )
		%+
		%\int_{C_\ij} (\theta\bd R_{h}^{D,n}+(1-\theta)\bd R_{h}^{C,n})  {\phi} {\rm d}x{\rm d}y
		+ \DT\int_{C_\ij} \bd F^{\bd R}(\bd U_{h}^{D,n})\cdot\nabla{\phi} {\rm d}x{\rm d}y  \\
		&  - \DT\int_{y_{\jmh}}^{y_{\jph}}
		\left( \bd F_1^{\bd R}\big(\bd U_{h}^{D,n}(x_\iph,y)\big)  {\phi}(x_\iph^-,y)
		- \bd F_1^{\bd R}\big(\bd U_{h}^{D,n}(x_\imh,y)\big) {\phi}(x_\imh^+,y)\right){\rm d}y  \\
		& - \DT\int_{x_{\imh}}^{x_{\iph}}
		\left( \bd F_2^{\bd R}\big(\bd U_{h}^{D,n}(x,y_\jph)\big)  {\phi}(x,y_\jph^-)
		- \bd F_2^{\bd R}\big(\bd U_{h}^{D,n}(x,y_\jmh)\big)  {\phi}(x,y_\jmh^+)\right){\rm d}x,
	\end{aligned}
$}
\end{equation}
	\begin{equation}\label{eqn:cdg-d}
		\begin{aligned}
     &\int_{D_\ijph} \bd R_{h}^{D,n+1}  {\phi} {\rm d}x{\rm d}y = \int_{D_\ijph} \bd R_{h}^{D,n}  {\phi} {\rm d}x{\rm d}y
     = \int_{D_\ijph} (\theta\bd R_{h}^{C,n}+(1-\theta)\bd R_{h}^{D,n})  {\phi} {\rm d}x{\rm d}y
    \\
     & \quad\quad + \theta    {\bm d}_{i+\frac12,j+\frac12} (\phi ) + \DT\int_{D_\ijph} \bd F^{\bd R}(\bd U_{h}^{C,n})\cdot\nabla{\phi} {\rm d}x{\rm d}y  \\
    & \quad\quad - \DT\int_{y_{j}}^{y_{j+1}}
    \left( \bd F_1^{\bd R}\big(\bd U_{h}^{C,n}(x_{i+1},y)\big)  {\phi}(x_{i+1}^-,y)
         - \bd F_1^{\bd R}\big(\bd U_{h}^{C,n}(x_i,y)\big)  {\phi}(x_i^+,y)\right){\rm d}y  \\
    & \quad\quad - \DT\int_{x_{i}}^{x_{i+1}}
    \left( \bd F_2^{\bd R}\big(\bd U_{h}^{C,n}(x,y_{j+1})\big)  {\phi}(x,y_{j+1}^-)
         - \bd F_2^{\bd R}\big(\bd U_{h}^{C,n}(x,y_j)\big)  {\phi}(x,y_j^+)\right){\rm d}x
\end{aligned}
\end{equation}
for all $\phi \in \mathbb P^k$.
Here, $\bd F^{\bd R} = ( \bd F^{\bd R}_1, \bd F^{\bd R}_2 )$,
$\theta = \frac{\DT}{\tau_{\text{max}}}$ with $\tau_{\text{max}}$ being the maximum time step allowed by the CFL condition,
and the CDG dissipation terms are given by
\begin{align}\label{eq:dissipationC}
	{\bm d}_{i,j} (\phi) &:=  \int_{C_\ij} (\bd R_{h}^{D,n}-\bd R_{h}^{C,n})  {\phi} {\rm d}x{\rm d}y,
	\\
	\label{eq:dissipationD}
	{\bm d}_{i+\frac12,j+\frac12} (\phi )  &:=  \int_{D_\ijph} (\bd R_{h}^{C,n}-\bd R_{h}^{D,n})  {\phi} {\rm d}x{\rm d}y.
\end{align}

\subsection{Globally DF CDG discretization for ${\bf Q}$}\label{subsec:GDF-Q}

The induction equations \eqref{eq:Q} for ${\bf Q}$ can be written as
\begin{align}\label{eqn:B1}
	\frac{\partial B_1}{\partial t} + \frac{\partial G(\bd U)}{\partial y} = 0, \qquad
	\frac{\partial B_2}{\partial t} - \frac{\partial G(\bd U)}{\partial x} = 0,
\end{align}
where $G(\bd U) = B_1 u_2 - B_2 u_1$ is the third component of the electric field $- {\bm u} \times {\bm B}$.
To exactly preserve the DF constraint \eqref{DF}, Li and Xu \cite{li2011central,li2012arbitrary} proposed a globally DF discretization of \eqref{eqn:B1} within the CDG framework. The key idea is to first approximate the normal component of the magnetic field by solving \eqref{eq:Q} on cell interfaces, and then reconstruct globally DF magnetic fields ${\bf Q}_h^C$ and ${\bf Q}_h^D$ by matching those normal components across interfaces.

Given the CDG solutions $\bd U_{h}^{C,n}$ and $\bd U_{h}^{D,n}$ at time $t = t^n$, the globally DF field ${\bf Q}_h^{C,n+1}$ at $t^{n+1} = t^n + \Delta t$ is constructed as follows. First, the normal component $b_{\imh,j}^{x}(y)$ on the vertical edge $x = x_\imh$ of cell $C_{i,j}$ is approximated by
\begin{align}\label{eqn:bx-ave}
	 \int_{y_\jmh}^{y_\jph} b_{\imh,j}^x(y)\, \varphi(y)\, {\rm d}y
	= \int_{y_\jmh}^{y_\jph} \left( \theta (B_1)_h^{D,n} + (1 - \theta)(B_1)_h^{C,n} \right)(x_\imh, y) \varphi(y)\, {\rm d}y & \notag \\
	\qquad + \DT \left[ -G(\bd U_h^{D,n}(x_\imh, y_\jph)) \varphi(y_\jph) + G(\bd U_h^{D,n}(x_\imh, y_\jmh)) \varphi(y_\jmh) \right] & \notag \\
	\qquad + \DT \int_{y_\jmh}^{y_\jph} G(\bd U_h^{D,n}(x_\imh, y))\, \frac{\partial \varphi(y)}{\partial y}\, {\rm d}y,  \qquad \varphi \in \mathbb{P}^k((y_\jmh, y_\jph)).&
\end{align}
Similarly, the normal component $b_{i,\jmh}^y(x)$ on the horizontal edge $y = y_\jmh$ of $C_{i,j}$ is approximated by
\begin{align}\label{eqn:by-ave}
	\int_{x_\imh}^{x_\iph} b_{i,\jmh}^y(x)\, \varphi(x)\, {\rm d}x
	= \int_{x_\imh}^{x_\iph} \left( \theta (B_2)_h^{D,n} + (1 - \theta)(B_2)_h^{C,n} \right)(x, y_\jmh) \varphi(x)\, {\rm d}x &  \notag \\
	\qquad  + \DT \left[ G(\bd U_h^{D,n}(x_\iph, y_\jmh)) \varphi(x_\iph) - G(\bd U_h^{D,n}(x_\imh, y_\jmh)) \varphi(x_\imh) \right] & \notag \\
	\qquad - \DT \int_{x_\imh}^{x_\iph} G(\bd U_h^{D,n}(x, y_\jmh))\, \frac{\partial \varphi(x)}{\partial x}\, {\rm d}x, \qquad \varphi \in \mathbb{P}^k((x_\imh, x_\iph)).&
\end{align}
As shown in \cite{li2011central}, the following compatibility condition is automatically satisfied:
\begin{align}\label{eqn:B-integra}
	\int_{x_\imh}^{x_\iph} \Big(b_{i,\jph}^y(x) - b_{i,\jmh}^y(x)\Big) {\rm d}x
	+ \int_{y_\jmh}^{y_\jph} \Big(b_{\iph,j}^x(y) - b_{\imh,j}^x(y)\Big) {\rm d}y = 0.
\end{align}
Thanks to this condition,
the globally DF field ${\bf Q}_{i,j}^{C,n+1}=\big((B_{1})_{i,j}^{C,n+1},(B_{2})_{i,j}^{C,n+1}\big)^\top$ can then be reconstructed by matching the normal components   $b_{\ipmh,j}^x(y)$ and $b_{i,\jpmh}^y(x)$ along the cell interfaces and enforcing the locally DF property within $C_{i,j}$.
The reconstruction techniques have been well studied in the literature; see, e.g., \cite{balsara2001divergence, balsara2004second, balsara2009divergence,li2011central, li2012arbitrary}. The reconstruction of ${\bf Q}_{i+\frac{1}{2},j+\frac{1}{2}}^{D,n+1}$ proceeds analogously. For further details, refer to \cite{li2011central, li2012arbitrary}.

\section{PosDiv-CDG: Provably PP and globally DF CDG method}\label{sec:PPGDF-CDG}

This section presents the PosDiv-CDG method for 2D ideal MHD equations. The extension to three dimensions is straightforward.
Without loss of generality, we begin with the forward Euler method for time discretization, noting that all discussions naturally extend to high-order SSP Runge--Kutta (RK) or multi-step methods, as discussed in \Cref{subsec:RK}.

\subsection{Challenges of achieving provable positivity in the globally DF CDG framework}\label{subsec:challenges}

A scheme is said to be PP if it maintains the numerical solution within the admissible state set
\begin{align}\label{ASS}
	\mathcal{G} = \left\{\bd U=(\rho,{\bm m},{\bm B},E)^\top:~~\rho>0,\quad \mathcal{E}(\bd U):=E-\frac12\bigg(\frac{\abs{{\bm m}}^2}{\rho}+\abs{{\bm B}}^2\bigg)>0\right\},
\end{align}
where $\mathcal{E}(\bd U) = \rho e$ denotes the internal energy.

Following the Zhang--Shu framework \cite{zhang2010positivity,zhang2010maximum,zhang2017positivity}, ensuring the provable PP property in a DG-type scheme (either central or non-central) involves two fundamental tasks:
\begin{itemize}
	\item[(i)] ensuring that point values at certain quadrature nodes lie in $\mathcal{G}$;
	\item[(ii)] ensuring that the updated cell averages remain in $\mathcal{G}$ at the next time level.
\end{itemize}
Achieving these within the globally DF framework presents several notable challenges:

\vspace{0.3em}
\noindent
\textbf{Challenge 1: Incompatibility between PP limiter and globally DF condition.}
In the Zhang--Shu framework \cite{zhang2010positivity,zhang2010maximum,zhang2017positivity}, task (i) is typically handled using a local scaling PP limiter, assuming that the cell averages are admissible at the current time step. However, due to its local nature, such a limiter may disrupt the continuity of the normal component of the magnetic field across cell interfaces. As a result, the globally DF condition may be violated, even if it initially holds. This reveals a fundamental incompatibility between standard PP limiters and the globally DF requirement.

\vspace{0.3em}
\noindent
\textbf{Challenge 2: Discrete DF condition required for positive cell averages.}
	Task (ii)---preserving the admissibility of updated cell averages---is more subtle for compressible MHD. As shown in \cite{WuSINUM2018,wu2023provably}, for conservative schemes, the positivity of pointwise values at time $t^n$ alone is insufficient to ensure that the updated cell averages at $t^{n+1}$ remain in $\mathcal{G}$. In particular, a discrete DF  condition is also necessary \cite{wu2023provably}.
This condition is globally coupled and automatically satisfied if the magnetic field is globally DF. However, as noted above, the global or discrete DF condition may be destroyed by the local PP limiter, making it difficult to simultaneously enforce pointwise positivity and the DF constraint.

\vspace{0.3em}
\noindent
\textbf{Challenge 3: Post-processing to restore globally DF may destroy positivity.}
	A natural question arises:
	\begin{quote}
		\emph{Can one first apply a PP limiter and then adjust the magnetic field to restore the globally DF condition?}
	\end{quote}
While this approach may seem promising, it introduces a subtle yet critical issue. Adjusting the magnetic field to restore the DF property may increase the magnetic energy, which---along with kinetic energy---is subtracted from the total energy to compute internal energy. This can result in negative internal energy (pressure), thereby violating the pointwise positivity that was just enforced. This exposes a structural incompatibility between standard PP limiting and the globally DF condition.

\vspace{0.3em}
\noindent
\textbf{Challenge 4: Difficulty in ensuring positivity of reconstructed cell averages.}
Unlike the standard CDG method, the globally DF CDG schemes described in \Cref{subsec:GDF-Q} reconstruct the magnetic field cell averages from the normal magnetic components on cell edges. These reconstructed values are then combined with the cell averages of the other conservative variables $\bd R$ to form the full cell average of $\bd U$. However, due to the distinct evolution mechanisms of $\bd Q$ and $\bd R$, it is highly nontrivial to verify whether the resulting cell average of $\bd U$ lies within $\mathcal{G}$. This adds another layer of difficulty in establishing the PP property within the globally DF CDG framework.

\subsection{Outline of the PosDiv-CDG Method}\label{subsec:outline}

This subsection outlines the proposed PosDiv-CDG method, which addresses the challenges discussed above.
To ensure compatibility with the globally DF constraint, a modification to the PP limiting procedure is introduced. However, this modification disrupts the direct correspondence between PP-limited point values and a single polynomial representation, thereby invalidating the cell average decomposition (see \Cref{subsec:CAD})---an essential component in establishing the PP property of the updated cell averages.
To resolve this issue, we carefully redesign the dissipation term in the CDG scheme, guided by the theoretical PP analysis presented in \Cref{subsec:PP-property}.
To address Challenge 4, we further introduce an auxiliary cell average for the magnetic field on each cell, denoted by $\overline{\bf Q}_{i,j}^{\star,C,n}$ and $\overline{\bf Q}_{i+\frac12,j+\frac12}^{\star,D,n}$, which are evolved using the standard CDG formulation for cell averages.

Let the indices $(\hat{\imath}, \hat{\jmath})$ represent $(i,j)$ for the primal cell $C_{i,j}$ or $(i+\frac{1}{2}, j+\frac{1}{2})$ for the dual cell $D_{i+\frac{1}{2},j+\frac{1}{2}}$.
We initialize the normal component of the magnetic field on cell edges and reconstruct the globally DF magnetic field $\bd Q_{\hat{\imath}, \hat{\jmath}}^{\sharp,0}(x,y)$ at $t^0 = 0$.
The CDG solution $\bd R_{\hat{\imath}, \hat{\jmath}}^{\sharp,0}(x,y)$ and the auxiliary cell average $\overline{\bf Q}_{\hat{\imath}, \hat{\jmath}}^{\star,\sharp,0}$ are initialized via standard $L^2$ projection.
Let $\overline{\bd R}_{\hat{\imath}, \hat{\jmath}}^{\star,\sharp,0}$ denote the cell average of $\bd R_{\hat{\imath}, \hat{\jmath}}^{\sharp,0}(x,y)$.
By the convexity of $\mathcal{G}$, the full cell average $\overline{\bd U}_{\hat{\imath}, \hat{\jmath}}^{\star,\sharp,0}$, formed by $\overline{\bd R}_{\hat{\imath}, \hat{\jmath}}^{\star,\sharp,0}$ and $\overline{\bf Q}_{\hat{\imath}, \hat{\jmath}}^{\star,\sharp,0}$, stays within $\mathcal{G}$.

The steps of the PosDiv-CDG method from $t^n$ to $t^{n+1}$ are as follows:

\begin{description}[leftmargin=*]
	
	\item[\textbf{Step 1.}]
	Given the globally DF CDG solution $\bd U_{h}^{\sharp,n} = \{ \bd R_{\hat{\imath}, \hat{\jmath}}^{\sharp,n}(x,y), \bd Q_{\hat{\imath}, \hat{\jmath}}^{\sharp,n}(x,y) \}$
	and the auxiliary magnetic field average $\overline{\bf Q}_{\hat{\imath}, \hat{\jmath}}^{\star,\sharp,n}$ at $t^n$,
	assume the full cell average $\overline{\bf U}_{\hat{\imath}, \hat{\jmath}}^{\star,\sharp,n} \in \mathcal{G}$.
	Evaluate the entropy Hessian matrix $\nabla^2 g( \overline{\bf U}_{\hat{\imath}, \hat{\jmath}}^{\star,\sharp,n} )$,
	where $g({\bf U}) = -\rho \log(p \rho^{-\gamma})$ is the entropy of the MHD system (see \Cref{subsec:CEO} for details).
	Note that $g(\bf U)$ is convex for ${\bf U} \in {\mathcal G}$ and its Hessian is positive-definite.

	\item[\textbf{Step 2.}]
	Apply the entropy-induced convex-oscillation-suppressing (COS) procedure to suppress spurious oscillations and correct high-order moments of $\bd R$.
	See \Cref{subsec:CEO} for details of the COS procedure.
	
	\item[\textbf{Step 3.}]
	Evaluate $\bd U_{\hat{\imath}, \hat{\jmath}}^{\sharp,n}(x,y)$ at all quadrature points used in the flux integrals of \eqref{eqn:cdg-c}--\eqref{eqn:by-ave} and in the cell average decomposition \eqref{ocad} (see \Cref{Fig:CAD} for the illustration of these points in $C_{i,j}$).
	Denote the set of point values as $\{ \bd U_{\hat{\imath}, \hat{\jmath}}^{(q),\sharp} \}_{q=1}^Q$.
	
	\item[\textbf{Step 4.}]  Given $\overline{\bf U}_{\hat{\imath}, \hat{\jmath}}^{\star,\sharp,n} \in \mathcal{G}$,
	apply a PP limiter using the admissible cell average to modify the point values $\{ \bd U^{(q),\sharp}_{\hat{\imath}, \hat{\jmath}} \}$ such that the resulting limited values $\bdhat U^{(q),\sharp}_{\hat{\imath}, \hat{\jmath}} \in \mathcal{G}$ for all $q$.
	Let $\{ \bdhat V^{(q),\sharp}_{\hat{\imath}, \hat{\jmath}} \}$ denote the corresponding primitive variables.
	Replace the magnetic component $\bdhat Q^{(q),\sharp}_{\hat{\imath}, \hat{\jmath}}$ in $\bdhat V^{(q),\sharp}_{\hat{\imath}, \hat{\jmath}}$ with the original globally DF  field $\bd Q^{(q),\sharp}_{\hat{\imath}, \hat{\jmath}}$,
	resulting in new primitive variables $\bdtilde V^{(q),\sharp}_{\hat{\imath}, \hat{\jmath}}$ that are both PP and globally DF.
	Convert these to conservative variables $\bdtilde U^{(q),\sharp}_{\hat{\imath}, \hat{\jmath}}$; see \Cref{subsec:PP} for details of the full PP limiting procedure.
	
	\item[\textbf{Step 5.}]
	Use the modified point values $ \{ \bdtilde U^{(q),\sharp}_{\hat{\imath}, \hat{\jmath}}  \}_{q=1}^Q$ to evaluate the integrals of flux functions ${\bf F}^{\bf R} ({\bf U})$
	in \eqref{eqn:cdg-c}--\eqref{eqn:cdg-d}, the electric field $G(\bd U)$ in \eqref{eqn:bx-ave}--\eqref{eqn:by-ave} and \eqref{eqn:cdg-c-bx-ave}--\eqref{eqn:cdg-c-by-ave},
	and to modify the dissipation terms in the CDG scheme for $\bf{R}$ as detailed in \Cref{subsec:CAD}.
	
	\item[\textbf{Step 6.}]
	Update the CDG solution $\bd R_{\hat{\imath}, \hat{\jmath}}^{\sharp,n+1}$ using \eqref{eqn:cdg-c}--\eqref{eqn:cdg-d} with the fluxes and modified dissipation terms from Step 5.
	Update the globally DF magnetic field $\bd Q_{\hat{\imath}, \hat{\jmath}}^{\sharp,n+1}$ using the CDG schemes in \Cref{subsec:GDF-Q}, with $G(\bd U)$ computed from the modified values.
	Update the auxiliary average $\overline{\bf Q}_{\hat{\imath}, \hat{\jmath}}^{\star,\sharp,n+1}$ via \eqref{eqn:cdg-c-bx-ave}--\eqref{eqn:cdg-c-by-ave}.
	The resulting high-order scheme ensures that the full cell average $\overline{\bd U}_{\hat{\imath}, \hat{\jmath}}^{\star,\sharp,n+1}$, formed by $\overline{\bd R}_{\hat{\imath}, \hat{\jmath}}^{\star,\sharp,n+1}$ and $\overline{\bf Q}_{\hat{\imath}, \hat{\jmath}}^{\star,\sharp,n+1}$, remains in $\mathcal{G}$, as shown in \Cref{Thm:PP}.
	This positivity of the cell averages allows the PP limiter to be applied again at $t^{n+1}$ to maintain pointwise positivity.
	
\end{description}

\subsection{Entropy-induced convex oscillation suppression} \label{subsec:CEO}

Conventional CDG schemes can produce spurious oscillations near discontinuities. To effectively suppress these oscillations, we propose a novel entropy-induced convex oscillation suppression (COS) procedure. The design of this procedure is inspired by our recent work \cite{cao2025COSDG}, which was proven to preserve entropy stability.
Unlike traditional WENO-type limiters, the proposed COS method preserves the compactness of the CDG framework, relying only on neighboring cell information.
Compared to many other limiters, the key advantages of COS include: avoiding costly characteristic decomposition (which is particularly expensive for MHD equations); acting only on the variable $\mathbf{R}$, thereby preserving the globally DF property; maintaining high-order accuracy without relying on any shock detector...

\subsection{Pointwise positivity with globally DF property}\label{subsec:PP}

 To establish a provably PP scheme, we aim to maintain the positivity of density and pressure not only in the cell averages but also at selected key points within each cell. The PP property of updated cell averages will be proven in \Cref{subsec:PP-property}. Assuming admissible cell averages $\overline{\bf U}_{\hat{\imath}, \hat{\jmath}}^{\star,\sharp,n} \in \mathcal{G}$ at time level $n$, this subsection introduces a new limiting strategy that enforces pointwise positivity while preserving the globally DF property of the magnetic field.

We employ standard Gauss quadrature with $N = k + 1$ nodes for the integrals in \eqref{eqn:cdg-c} and \eqref{eqn:cdg-d} for the $\mathbb{P}^k$-based CDG method. Let $\{ x_{\imf}^{(\mu)} \}_{\mu=1}^{N}$ and $\{ x_{\ipf}^{(\mu)} \}_{\mu=1}^{N}$ be the $N$-point Gauss nodes mapped to intervals $[x_\imh, x_i]$ and $[x_i, x_\iph]$, respectively, and define $\mathbb{Q}_i^x := \bigcup_{\sigma = \pm1} \{ x_{i+\frac{\sigma}{4}}^{(\mu)} \}_{\mu=1}^{N}$. The associated weights $\{\omega_\mu\}$ are normalized over the reference interval $[-\tfrac12, \tfrac12]$. Define $\mathbb{Q}_j^y:= \bigcup_{\sigma = \pm1} \{ y_{j+\frac{\sigma}{4}}^{(\mu)} \}_{\mu=1}^{N}$ analogously in the $y$-direction.
To guarantee positivity at these quadrature nodes, we require
\begin{equation*}\label{eq:S1}
	\bd U^{D,n}_h (x,y) \in \mathcal{G}, \qquad \forall (x,y) \in  (\mathbb{Q}_i^x \otimes \mathbb{Q}_j^y) \cup \mathbb{Q}^{v}_\ij ,
\end{equation*}
where $\mathbb{Q}^{v}_\ij = \bigcup_{\sigma = \pm 1} \{(x_{i\pm\frac14+\frac{\sigma}{4}}, y_{j\pm\frac14+\frac{\sigma}{4}})\}$ is also included to ensure that $G( {\bf U}^{D,n}_h(x,y) )$ is well-defined at the vertices of $C_{i,j}$ in \eqref{eqn:bx-ave} and \eqref{eqn:by-ave}.
In addition, to guarantee the PP property of the updated cell averages at the next time level $t^{n+1}$ (as shown in \Cref{Thm:PP} in \Cref{subsec:PP-property}), we impose
\begin{equation*}
	\bd U^{D,n}_h(x,y) \in \mathcal{G}, \qquad \forall (x,y) \in \mathbb{Q}_\ij^{\rm CAD},
\end{equation*}
where the set $\mathbb{Q}_\ij^{\rm CAD}$ will be defined in \Cref{subsec:CAD}.
Combining the above, we require
\begin{equation}\label{eq:SD}
\bd U^{D,n}_h(x,y) \in {\mathcal G}, \qquad \forall (x,y) \in 	\mathbb{Q}_\ij, \quad \forall i,j,
\end{equation}
with $\mathbb{Q}_\ij = \mathbb{Q}_\ij^{\rm CAD} \bigcup (\mathbb{Q}_i^x \otimes \mathbb{Q}_j^y) \bigcup \mathbb{Q}^{v}_\ij$.  An illustration of the point set $\mathbb{Q}_{i,j}$ is provided in \Cref{Fig:CAD}.
Define
$$
\mathbb{Q}_\ijph := \bigcup\limits_{\sigma=\pm1}\bigcup\limits_{\sigma'=\pm1} \Big\{ \mathbb{Q}_{i+\frac12+\frac{\sigma}2, j+\frac12+\frac{\sigma'}2} \bigcap D_\ijph \Big\}.
$$
Since $\bigcup_{i,j} \mathbb{Q}_\ijph = \bigcup_{i,j} \mathbb{Q}_\ij$, the requirement \eqref{eq:SD} is equivalently expressed as
\begin{equation}\label{eq:SD2}
	\bd U^{D,n}_h(x,y) \in \mathcal{G}, \qquad \forall (x,y) \in \mathbb{Q}_\ijph, \quad \forall i,j.
\end{equation}

Similarly, we require the same pointwise positivity for $\bd U^{C,n}_h$:
$$
\bd U^{C,n}_h(x,y) \in {\mathcal G}, \qquad \forall (x,y) \in 	\mathbb{Q}_\ij, \quad \forall i,j.
$$

We now introduce a PP limiting procedure that ensures pointwise positivity while preserving the globally DF property.
Without loss of generality, we only describe the procedure for $\bd U^{D,n}_h$.
For convenience, denote the original (unlimited) point values as
\begin{equation}\label{eq:333}
	 \{ \bd U_\ijph^{(q),D} \}_{q=1}^Q := \left\{ \bd U^{D,n}_h(x,y):~  (x,y) \in 	\mathbb{Q}_\ijph \right\},
\end{equation}
where $Q$ is the total number of points in $\mathbb{Q}_\ijph$. For simplicity, we omit the superscript ``$n$" for time level when no confusion arises.
Given cell averages $\overline{\bf U}_\ijph^{\star,D} \in \mathcal{G}$, the limiter modifies $\{ \bd U_\ijph^{(q),D} \}_{q=1}^Q$ so that the resulting point values lie in $\mathcal{G}$, using the following steps:
\begin{itemize}[leftmargin=*]
	\item \textbf{Enforce positive density:}
	Let $\epsilon_\rho := \min\{ 10^{-13},\, \Bar \rho_\ijph^{\star,D} \}$. Define the limited density
	\begin{align*}
		\check \rho_\ijph^{(q),D} = (1 - \theta_\rho)\Bar \rho_\ijph^{\star,D} + \theta_\rho \rho_\ijph^{(q),D}, \qquad 1 \le q \le Q,
	\end{align*}
	where $\theta_\rho := \min\left\{ \frac{\Bar\rho_\ijph^{\star,D} - \epsilon_\rho}{\Bar\rho_\ijph^{\star,D} - \min\limits_{1 \le q \le Q} \rho_\ijph^{(q),D} },\, 1 \right\}$.
	Set the modified conservative variables as
$$\check{\bd U}_\ijph^{(q),D} = \Big(\big( \check \rho,~{\bm m},~B_1,~B_2,~B_3,~E \big)^\top\Big)_\ijph^{(q),D}.$$
	
	\item \textbf{Enforce positive internal energy:}
	Let $\epsilon_{\rho e} := \min\{ 10^{-13},\, \mathcal{E}(\bdbar U_\ijph^{\star,D}) \}$. Then apply
	\begin{align*}
		\bdhat U_\ijph^{(q),D} = (1 - \theta_{\rho e})\, \bdbar U_\ijph^{\star,D} + \theta_{\rho e}\, \check{\bd U}_\ijph^{(q),D}, \qquad 1 \le q \le Q,
	\end{align*}
	with
	\[
	\theta_{\rho e} := \min\left\{ \frac{\mathcal{E}(\bdbar U_\ijph^{\star,D}) - \epsilon_{\rho e}}{\mathcal{E}(\bdbar U_\ijph^{\star,D}) - \min\limits_{1 \le q \le Q} \mathcal{E}(\check{\bd U}_\ijph^{(q),D})},\, 1 \right\}.
	\]
	
	\item \textbf{Convert to primitive variables:}
	Transform $\bdhat U_\ijph^{(q),D} = (\Hat\rho,\, \Hat{\bm m},\, \Hat B_1,\, \Hat B_2,\, \Hat B_3,\, \Hat E)^\top$ into the primitive variables
	$
	\bdhat V_\ijph^{(q),D} = \left( \Hat\rho,\, \Hat{\bm u},\, \Hat B_1,\, \Hat B_2,\, \Hat B_3,\, \Hat p \right)^\top.
	$
	
	\item \textbf{Restore globally DF condition:}
	To preserve the globally DF property, replace $(\Hat B_1, \Hat B_2)$ in the primitive variables with the original magnetic components $(B_1, B_2)$:
	\[
	\bdtilde V_\ijph^{(q),D} = \left( \Hat\rho,\, \Hat{\bm u},\, B_1,\, B_2,\, \Hat B_3,\, \Hat p \right)^\top.
	\]
	The corresponding conservative variables are denoted as $
	\{ \bdtilde U_\ijph^{(q),D} \}_{q=1}^Q,
	$
	which lie in $\mathcal{G}$ and satisfy the globally DF condition.
\end{itemize}

\begin{figure}[!thb]
	\vspace*{-0.2cm}
	\centering
	\begin{tikzpicture}[xscale=1, yscale=1]
		% ¶¨Òå×ø±ê
		\def\one{-1}
		\def\XonePH{1}
		\def\Xtwo{3}
		\def\XtwoPH{5}
		\def\Xthr{7}
		\def\XthrPH{9}
		\def\Yone{-1}
		\def\YonePH{1}
		\def\Ytwo{3}
		\def\YtwoPH{5}
		\def\Ythr{7}
		\def\YthrPH{9}
		
		\foreach \x in {1,3,5,7,9} {
			\draw[gray!30] (\x,\YonePH-.5) -- (\x,\YthrPH+.5);
		}
		\foreach \y in {1,3,5,7,9} {
			\draw[gray!30] (\XonePH-.5,\y) -- (\XthrPH+.5,\y);
		}
		
		\draw [thick, dashed] (\XonePH, \YtwoPH) -- (\Xtwo, \YtwoPH);
		\draw [thick, dashed] (\Xthr, \YtwoPH) -- (\XthrPH, \YtwoPH);
		\draw [thick, dashed] (\XtwoPH, \YonePH) -- (\XtwoPH, \Ytwo);
		\draw [thick, dashed] (\XtwoPH, \Ythr) -- (\XtwoPH, \YthrPH);
		\draw [thick, dashed] (\XonePH, \YonePH) rectangle (\XthrPH, \YthrPH);
		
		\node at (3,8) {\small $D_{i-\frac12,j+\frac12}$};
		\node at (7,8) {\small $D_{i+\frac12,j+\frac12}$};
		\node at (3,2) {\small $D_{i-\frac12,j-\frac12}$};
		\node at (7,2) {\small $D_{i+\frac12,j-\frac12}$};

		\draw [very thick, dashed] (\Xtwo, \YtwoPH) -- (\Xthr, \YtwoPH);
		\draw [very thick, dashed] (\XtwoPH, \Ytwo) -- (\XtwoPH, \Ythr);

		\draw [very thick] (\Xtwo, \Ytwo) rectangle (\Xthr, \Ythr);

		\foreach \dx in {0.2, 1, 1.8} {
			\filldraw [red] (\XtwoPH+\dx, \Ythr) circle (1.2pt);
			\filldraw [red] (\XtwoPH+\dx, \YtwoPH) circle (1.2pt);
			\filldraw [red] (\XtwoPH+\dx, \Ytwo) circle (1.2pt);
			\filldraw [red] (\XtwoPH-\dx, \Ythr) circle (1.2pt);
			\filldraw [red] (\XtwoPH-\dx, \YtwoPH) circle (1.2pt);
			\filldraw [red] (\XtwoPH-\dx, \Ytwo) circle (1.2pt);
		}
		\foreach \dy in {0.2, 1, 1.8} {
			\filldraw [red] (\Xthr, \YtwoPH+\dy) circle (1.2pt);
			\filldraw [red] (\XtwoPH, \YtwoPH+\dy) circle (1.2pt);
			\filldraw [red] (\Xtwo, \YtwoPH+\dy) circle (1.2pt);
			\filldraw [red] (\Xthr, \YtwoPH-\dy) circle (1.2pt);
			\filldraw [red] (\XtwoPH, \YtwoPH-\dy) circle (1.2pt);
			\filldraw [red] (\Xtwo, \YtwoPH-\dy) circle (1.2pt);
		}

		\foreach \dx in {0.8, 1.2} {
			\filldraw [blue] (\XtwoPH+\dx, \YtwoPH+1) circle (1.2pt);
			\filldraw [blue] (\XtwoPH-\dx, \YtwoPH+1) circle (1.2pt);
			\filldraw [blue] (\XtwoPH+\dx, \YtwoPH-1) circle (1.2pt);
			\filldraw [blue] (\XtwoPH-\dx, \YtwoPH-1) circle (1.2pt);
		}
		\foreach \dy in {0.8, 1.2} {
			\filldraw [opacity=0.2, blue] (\XtwoPH+1, \YtwoPH+\dy) circle (1.2pt);
			\filldraw [opacity=0.2, blue] (\XtwoPH-1, \YtwoPH+\dy) circle (1.2pt);
			\filldraw [opacity=0.2, blue] (\XtwoPH+1, \YtwoPH-\dy) circle (1.2pt);
			\filldraw [opacity=0.2, blue] (\XtwoPH-1, \YtwoPH-\dy) circle (1.2pt);
		}
		
		\filldraw [cyan!40,opacity=0.3] (\XonePH,\YonePH) rectangle (\XtwoPH,\YtwoPH);
		\filldraw [green!40,opacity=0.3] (\XonePH,\YtwoPH) rectangle (\XtwoPH,\YthrPH);
		\filldraw [red!40,opacity=0.3] (\XtwoPH,\YtwoPH) rectangle (\XthrPH,\YonePH);
		\filldraw [yellow!40,opacity=0.3] (\XtwoPH,\YtwoPH) rectangle (\XthrPH,\YthrPH);

		\foreach \dx in {0.2, 1, 1.8} {
			\filldraw [darkgreen] (\XtwoPH-\dx, \YtwoPH-1.8) circle (1.2pt);
			\filldraw [darkgreen] (\XtwoPH-\dx, \YtwoPH-1) circle (1.2pt);
			\filldraw [darkgreen] (\XtwoPH-\dx, \YtwoPH-0.2) circle (1.2pt);
			\filldraw [darkgreen] (\XtwoPH-\dx, \YtwoPH+1) circle (1.2pt);
			\filldraw [darkgreen] (\XtwoPH-\dx, \YtwoPH+0.2) circle (1.2pt);
			\filldraw [darkgreen] (\XtwoPH-\dx, \YtwoPH+1.8) circle (1.2pt);
			
			\filldraw [darkgreen] (\XtwoPH+\dx, \YtwoPH-1.8) circle (1.2pt);
			\filldraw [darkgreen] (\XtwoPH+\dx, \YtwoPH-1) circle (1.2pt);
			\filldraw [darkgreen] (\XtwoPH+\dx, \YtwoPH-0.2) circle (1.2pt);
			\filldraw [darkgreen] (\XtwoPH+\dx, \YtwoPH+1) circle (1.2pt);
			\filldraw [darkgreen] (\XtwoPH+\dx, \YtwoPH+0.2) circle (1.2pt);
			\filldraw [darkgreen] (\XtwoPH+\dx, \YtwoPH+1.8) circle (1.2pt);
		}

		\foreach \dx in {-2, 0, 2} {
			\filldraw [yellow] (\XtwoPH+\dx, \Ytwo) circle (1.2pt);
			\filldraw [yellow] (\XtwoPH+\dx, \YtwoPH) circle (1.2pt);
			\filldraw [yellow] (\XtwoPH+\dx, \Ythr) circle (1.2pt);
		}
	\end{tikzpicture}
	\caption{Illustration for the $\mathbb{P}^2$ case. Solid lines: primal cell $C_\ij$; dashed lines: dual cells $D_\ijmh$, $D_{\imh,\jph}$, $D_\ijph$, and $D_{\iph,\jmh}$. Dark green dots ``\protect\tikz\protect\fill[darkgreen] (0,0) circle (1.5pt);'': tensor-product nodes $(\mathbb{Q}_i^x \otimes \mathbb{Q}_j^y)$; yellow dots ``\protect\tikz\protect\fill[yellow] (0,0) circle (1.5pt);'': vertex nodes $\mathbb{Q}^{v}_\ij$; red dots ``\protect\tikz\protect\fill[red] (0,0) circle (1.5pt);'': boundary nodes in $\mathbb{Q}_\ij^{\rm CAD}$; dark blue (or light blue) dots ``\protect\tikz\protect\fill[blue] (0,0) circle (1.5pt);'' (or ``\protect\tikz\protect\fill[opacity=0.2, blue] (0,0) circle (1.5pt);''): internal nodes in $\mathbb{Q}_\ij^{\rm CAD}$.}
	\label{Fig:CAD}
\end{figure}

\subsection{Cell average decomposition and modified dissipation terms}\label{subsec:CAD}

Decomposing the cell average of DG solution polynomials into a convex combination of certain point values plays a crucial role in establishing the PP property of updated cell averages, as demonstrated in \cite{zhang2010maximum, cui2023111882, cui2024optimal}. Such a cell average decomposition (CAD) is typically constructed using an appropriate 2D quadrature rule for DG polynomials. However, once the PP limiting procedure described above is applied, the modified point values $\{ \bdtilde U_\ijph^{(q),D} \}_{q=1}^Q$ are no longer representable by a single polynomial vector, thereby invalidating the standard CAD technique.

To overcome this challenge, we propose a key modification to the CDG dissipation terms ${\bm d}_{i,j}(\phi)$ and ${\bm d}_{i+\frac12,j+\frac12}(\phi)$ with $\phi(x,y)\equiv 1$ in \eqref{eq:dissipationC} and \eqref{eq:dissipationD}. This modification does not compromise the high-order accuracy of the CDG scheme. Its design is motivated by the rigorous PP analysis in \Cref{subsec:PP-property} and is essential for ensuring the updated cell averages remain in $\mathcal G$. We describe the modification for ${\bm d}_{i,j}(1)$; the treatment for ${\bm d}_{i+\frac12,j+\frac12}(1)$ is analogous.

According to the definition in \eqref{eqn:cdg-c}, the original dissipation with $\phi \equiv 1$ is given by
\begin{align*}
{\bm d}_{i,j}( 1 ) = \frac{1}{\DX\DY} \int_{C_\ij} \bd R_{h}^{D}(x,y) - \bd R_{h}^{C}(x,y) {\rm d}x{\rm d}y =: \overline{\bf R}_{i,j}^D - \overline{\bf R}_{i,j}^{\star,C}.
\end{align*}
Due to the overlapping mesh structure, $\bd R_{h}^{D}(x,y)$ is discontinuous on ${C_\ij}$. To evaluate $\overline{\bf R}_{i,j}^D$, we decompose the integral into four parts:
\begin{align}\label{eq:cell}
\overline{\bf R}_{i,j}^D = \frac14\sum_{\sigma',\sigma=\pm1}
	\frac{4}{\DX\DY} \int_{C_\ij\bigcap D_{i\pm\frac{\sigma'}2, j\pm\frac{\sigma}2}}
	\bd R_h^{D} (x,y) {\rm d}x{\rm d}y.
\end{align}
A suitable 2D quadrature is applied to each of the four subdomains to construct a CAD.

However, after the PP limiting, the modified point values $\{ \bdtilde U_\ijph^{(q),D} \}$ are no longer associated with a polynomial vector. To address this, we first apply a standard CAD to \eqref{eq:cell} and then replace the original values of $\bd R_h^D(x,y)$ at the CAD nodes with the corresponding modified values in $\{ \bdtilde U_\ijph^{(q),D} \}$. This yields the modification to $\overline{\bf R}_{i,j}^D$ as
\begin{align}\label{ocad}
\widetilde{\bf R}_{i,j}^D = \overline{\bf R}_\ij^{D,1} + \overline{\bf R}_\ij^{D,2} + \overline{\bf R}_\ij^{D,3}
\end{align}
with $\overline{\bf R}_\ij^{D,1} = \frac14\sum_{\sigma',\sigma=\pm1} \sum_{s=1}^{S} \Tilde\omega_s \bdtilde R_{i+\frac{\sigma'}4,j+\frac{\sigma}4}^{(s),D}$ and
\begin{align*}
	& \overline{\bf R}_\ij^{D,2} = \frac14\sum_{\sigma=\pm1}\sum_{\mu=1}^N \omega_\mu \bigg( \Tilde\omega_x\Big(
	\bdtilde R_{i,j+\frac{\sigma}4}^{-,(\mu),D}
	+ \bdtilde R_{i,j+\frac{\sigma}4}^{+,(\mu),D} \Big)
	+ \Tilde\omega_y \Big( \bdtilde R_{i+\frac{\sigma}4,j}^{(\mu),-,D}
	+ \bdtilde R_{i+\frac{\sigma}4,j}^{(\mu),+,D} \Big) \bigg), \\
	& \overline{\bf R}_\ij^{D,3} = \frac14\sum_{\sigma=\pm1}\sum_{\mu=1}^N \omega_\mu \bigg( \Tilde\omega_x\Big(
	\bdtilde R_{i+\frac12,j+\frac{\sigma}4}^{(\mu),D}
	+ \bdtilde R_{i-\frac12,j+\frac{\sigma}4}^{(\mu),D} \Big)
	+ \Tilde\omega_y\Big( \bdtilde R_{i+\frac{\sigma}4,j+\frac12}^{(\mu),D}
	+ \bdtilde R_{i+\frac{\sigma}4,j-\frac12}^{(\mu),D}  \Big) \bigg).
\end{align*}
Here, the modified point values ${\bdtilde R}_{i+\frac{\sigma'}{4},j+\frac{\sigma}{4}}^{(s),D}$, ${\bdtilde R}_{i,j+\frac{\sigma}{4}}^{\pm,(\mu),D}$, ${\bdtilde R}_{i+\frac{\sigma}{4},j}^{(\mu),\pm,D}$, ${\bdtilde R}_{i\pm\frac{1}{2},j+\frac{\sigma}{4}}^{(\mu),D}$, and ${\bdtilde R}_{i+\frac{\sigma}{4},j\pm\frac{1}{2}}^{(\mu),D}$ are evaluated at the CAD nodes: internal nodes $(\widetilde x_{i+\frac{\sigma'}{4}}^{(s)}, \widetilde y_{j+\frac{\sigma}{4}}^{(s)})$ and boundary nodes, such as $(x_i^\pm, y_{j+\frac{\sigma}{4}}^{(\mu)})$, etc. The CAD weights $\{\Tilde\omega_x, \Tilde\omega_y, \Tilde\omega_s\}$ satisfy $2\Tilde\omega_x + 2\Tilde\omega_y + \sum_{s=1}^S \Tilde\omega_s = 1$. The CAD nodes (see \Cref{Fig:CAD} for the $\mathbb{P}^2$ case) are defined as follows:
\begin{itemize}[leftmargin=2.em]
	\item Internal nodes ``\tikz\fill[opacity=0.2, blue] (0,0) circle (1.5pt);'' or ``\tikz\fill[blue] (0,0) circle (1.5pt);'':
	$\mathbb{Q}_{i+\frac{\sigma'}4,j+\frac{\sigma}4}^{\text{int}} := \{(\Tilde x_{i+\frac{\sigma'}4}^{(s)}, \Tilde y_{j+\frac{\sigma}4}^{(s)})\}_{s=1}^S$.
	Light blue nodes are used when $a_1/\DX \ge a_2/\DY$; dark blue when $a_1/\DX < a_2/\DY$, where $a_1$ and $a_2$ are the maximum wave speeds in the $x$- and $y$-directions.
	
	\item Boundary nodes ``\tikz\fill[red] (0,0) circle (1.5pt);'':
	$\mathbb{Q}_{i+\frac{\sigma'}4,j+\frac{\sigma}4}^{\text{bd}} := \big\{(x_{i+\frac{\sigma'}4}\pm\frac{\DX}4, y_{j+\frac{\sigma}4}^{(\mu)}), (x_{i+\frac{\sigma'}4}^{(\mu)}, y_{j+\frac{\sigma}4}\pm\frac{\DY}4) \big\}_{\mu=1}^N$, which include all Gauss points for approximating the 1D integrals in \eqref{eqn:cdg-c}.
\end{itemize}
The full set of CAD nodes used to modify the dissipation term $\widetilde{\bm d}_{i,j}^D$ for cell $C_{ij}$ is given by
\begin{align}\label{Q_CAD}
	\mathbb{Q}_\ij^{\rm CAD} = \bigcup_{\sigma'=\pm1} \bigcup_{\sigma=\pm1}
	\left( \mathbb{Q}_{i+\frac{\sigma'}4,j+\frac{\sigma}4}^{\text{bd}} \cup \mathbb{Q}_{i+\frac{\sigma'}4,j+\frac{\sigma}4}^{\text{int}} \right).
\end{align}
Let $\{ \phi_{i,j}^{(\ell)} \}_{\ell \ge 0}$ with $\phi_{i,j}^{(0)} \equiv 1$ denote an orthogonal basis of $\mathbb{P}^k(C_{i,j})$. The modified CDG dissipation term $\widetilde{\bm d}_{i,j}(\phi)$ is then defined as
\begin{align}\label{modified}
\widetilde {\bm d}_{i,j}(\phi_{i,j}^{(\ell)}) =
\begin{cases}
	\widetilde {\bf R}_{i,j}^D - \overline{\bf R}_{i,j}^{\star,C}, \quad & \mbox{if } \ell =0,
	\\
	{\bm d}_{i,j}(\phi_{i,j}^{(\ell)}), \quad & \mbox{if } \ell \ge 1.
\end{cases}
\end{align}
That is, the modification is applied only to the dissipation term associated with the zeroth moment, which governs the update of cell averages.
An analogous modification is applied to the dissipation term ${\bm d}_{i+\frac12,j+\frac12}(1)$ in \eqref{eq:dissipationD}.
It is worth noting that this modification may compromise the conservation of total energy.
Nevertheless, it ensures the provable preservation of the positivity property for the updated cell averages while maintaining the high-order accuracy of the CDG scheme.

We next introduce two representative CADs, either of which can be employed in the current framework.

\begin{rem}[Zhang--Shu CAD \cite{zhang2010maximum}]\label{rem:cad}\rm
	A classical CAD was proposed in \cite{zhang2010maximum}, constructed using tensor products of the 1D Gauss and Gauss--Lobatto quadrature rules.
	The CAD nodes are taken as the tensor product of these quadrature points in the $x$- and $y$-directions; see \cite{zhang2010maximum,cui2023111882} for details.
	The corresponding CAD boundary weights, which directly influence the BP CFL number, are given by
	\begin{align*}
		\Tilde\omega_x = \frac{\Hat\omega_1\, \Hat\alpha_1/\Delta x}{\Hat\alpha_1/\Delta x + \Hat\alpha_2/\Delta y}, \quad
		\Tilde\omega_y = \frac{\Hat\omega_1\, \Hat\alpha_2/\Delta y}{\Hat\alpha_1/\Delta x + \Hat\alpha_2/\Delta y},
	\end{align*}
	where $\Hat\omega_1 = \frac{1}{L(L-1)}$ with $L = \big\lceil \frac{k+3}{2} \big\rceil$, and $\Hat\alpha_\ell$ is the maximum wave speed.
\end{rem}

\begin{rem}[Cui--Ding--Wu CAD \cite{cui2023111882,cui2024optimal}]\label{rem:ocad}\rm
	An improved CAD, recently proposed in \cite{cui2023111882,cui2024optimal}, offers higher efficiency by requiring fewer nodes and providing larger boundary weights. This results in lower computational cost and a more relaxed BP CFL condition. The internal CAD nodes can be found in \cite{cui2023111882,cui2024optimal}. The boundary weights are given by
	\begin{align*}
		\Tilde\omega_x = \frac{\Bar\omega_{\star}\, a_1/\Delta x}{a_1/\Delta x + a_2/\Delta y}, \quad
		\Tilde\omega_y = \frac{\Bar\omega_{\star}\, a_2/\Delta y}{a_1/\Delta x + a_2/\Delta y},
	\end{align*}
	where $\Bar\omega_{\star}$ depends on $\zeta := \frac{a_1/\Delta x - a_2/\Delta y}{a_1/\Delta x + a_2/\Delta y}$ and the polynomial degree $k$.
 For $k = 1$, $\Bar\omega_{\star} = \frac{1}{2}$.
 For $k = 2$ or $3$, $\Bar\omega_{\star} = \frac{1}{4 + 2|\zeta|}$.
 For $k = 4$ or $5$,
		$$
			\Bar\omega_{\star} = \left[\frac{14}{3} + \frac{2}{3}\sqrt{78\zeta^2 + 46} \cos
			\left(\frac{1}{3} \arccos\left( \frac{1476\zeta^2 - 244}{(78\zeta^2 + 46)^{3/2}} \right)\right)\right]^{-1}.
		$$
	For $k \geq 6$, see \cite{cui2024optimal}.
\end{rem}

\subsection{Evolution for the auxiliary magnetic field averages}\label{subsec:Aux-Q}

Due to the distinct evolution mechanisms of $\bd R$ and $\bd Q$ described in \Cref{sec:GDF-CDG}, it is challenging to rigorously ensure that the cell average of $\bd U$, composed of $\bd R$ and $\bd Q$, remains within the admissible set $\mathcal{G}$.
To address this, we introduce auxiliary cell averages for the magnetic field, denoted by $\overline{\bf Q}_\ij^{\star,C}$ and $\overline{\bf Q}_\ijph^{\star,D}$ on the primal cell $C_\ij$ and dual cell $D_\ijph$, respectively. These are evolved using the standard CDG formulation for the zeroth moment (equivalent to a finite volume update), with modified dissipation terms. For instance, on the primal cell $C_\ij$, the evolution equations are given by
\begin{align}\label{eqn:cdg-c-bx-ave}
	(\Bar B_1)_\ij^{\star,C,n+1} &=
	(\Bar B_1)_\ij^{\star,C,n} + \theta\, \widetilde d_\ij^{B_1}
	- \frac{\Delta t}{\Delta y} \sum_{\sigma=\pm1} \sum_{\mu = 1}^N \frac{\omega_{\mu}}{2}
	\left(  G(\bdtilde U_{i+\frac{\sigma}4,\jph}^{(\mu),D})
	- G(\bdtilde U_{i+\frac{\sigma}4,\jmh}^{(\mu),D}) \right), \\
	\label{eqn:cdg-c-by-ave}
	(\Bar B_2)_\ij^{\star,C,n+1} &=
	(\Bar B_2)_\ij^{\star,C,n} + \theta\, \widetilde d_\ij^{B_2}
	+ \frac{\Delta t}{\Delta x} \sum_{\sigma=\pm1} \sum_{\mu = 1}^N \frac{\omega_{\mu}}{2}
	\left(  G(\bdtilde U_{\iph,j+\frac{\sigma}4}^{(\mu),D})
	- G(\bdtilde U_{\imh,j+\frac{\sigma}4}^{(\mu),D}) \right),
\end{align}
where $\widetilde d_\ij^{B_\ell} := (\widetilde B_\ell)_\ij^{D,n} - (\Bar B_\ell)_\ij^{\star,C,n}$ for $\ell = 1,2$ are the modified dissipation terms. Here, $(\widetilde B_\ell)_\ij^{D,n}$ represents the modified version of $(\Bar B_\ell)_\ij^{D,n}$ obtained using the same CAD and procedure in \eqref{ocad}.
The evolution of the auxiliary magnetic field averages $\overline{\bf Q}_{i+\frac{1}{2},j+\frac{1}{2}}^{\star,D}$ on the dual cells $D_{i+\frac{1}{2},j+\frac{1}{2}}$ follows a similar structure and is therefore omitted.

\subsection{Provable PP property for the updated cell averages}\label{subsec:PP-property}

This subsection presents a theoretical proof that the updated cell average $\overline{\bd U}_{\hat{\imath}, \hat{\jmath}}^{\star,\sharp,n+1}$, composed of $\overline{\bd R}_{\hat{\imath}, \hat{\jmath}}^{\star,\sharp,n+1}$ and $\overline{\bf Q}_{\hat{\imath}, \hat{\jmath}}^{\star,\sharp,n+1}$, remains within the admissible state set $\mathcal{G}$.
The cell average $\overline{\bd R}_{\hat{\imath}, \hat{\jmath}}^{\star,\sharp,n+1}$ is computed using \eqref{eqn:cdg-c}--\eqref{eqn:cdg-d} with $\phi = 1$, where the integrals are approximated via Gauss quadrature with nodal values replaced by their modified counterparts $\{ \bdtilde U_{\hat{\imath}, \hat{\jmath}}^{(q),\sharp} \}$, and the dissipation terms are substituted with the modified versions $\widetilde{\bm d}_{\hat{\imath}, \hat{\jmath}}$. The auxiliary magnetic field average $\overline{\bf Q}_{\hat{\imath}, \hat{\jmath}}^{\star,\sharp,n+1}$ is updated using \eqref{eqn:cdg-c-bx-ave}--\eqref{eqn:cdg-c-by-ave}.

The proof is nontrivial due to the strong nonlinearity of the MHD system and the nonlinear constraint $\mathcal{E}(\bd U) > 0$ in \eqref{ASS}. To address this challenge, we adopt the geometric quasi-linearization (GQL) approach \cite{WuSINUM2018,wu2023geometric}, which skillfully transforms the nonlinear constraint into equivalent linear forms.

\begin{lem}[GQL representation \cite{WuSINUM2018,wu2023geometric}]\label{Lem:GQL}
	 The admissible state set $\mathcal{G}$ defined in \eqref{ASS} is equivalent to
	\begin{align}\label{ASS*}
		\mathcal{G}_{*}=\left\{\bd U=(\rho,{\bm m},{\bm B},E)^\top:\bd U\cdot {\bm n}_1>0,\ \bd U\cdot {\bm n}^{*}+\frac{\abs{{\bm B}^{*}}^2}2>0\quad \forall {\bm u}^{*},{\bm B}^{*}\in\mathbb{R}^3\right\},
	\end{align}
	where ${\bm n}_1 = (1,0,0,0,0,0,0,0)^\top$, ${\bm n}^{*} = \big(\frac{\abs{{\bm u}^{*}}^2}2,-{\bm u}^{*},-{\bm B}^{*},1\big)^\top$, and $\{{\bm u}^{*},{\bm B}^{*}\}$ are free auxiliary variables independent of $\bd U$.
\end{lem}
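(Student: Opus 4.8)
The plan is to prove the set equality $\mathcal{G}=\mathcal{G}_*$ by showing that, after optimizing over the free auxiliary variables ${\bm u}^*,{\bm B}^*$, the family of linear constraints defining $\mathcal{G}_*$ collapses exactly onto the two nonlinear constraints defining $\mathcal{G}$. First I would compute the two inner products explicitly. Since ${\bm n}_1=(1,0,\dots,0)^\top$, one has $\bd U\cdot{\bm n}_1=\rho$, so the first constraint in $\mathcal{G}_*$ is literally $\rho>0$. Expanding the second expression gives the quadratic
\begin{equation*}
\Phi({\bm u}^*,{\bm B}^*):=\bd U\cdot{\bm n}^*+\frac{\abs{{\bm B}^*}^2}2
=\frac{\rho\abs{{\bm u}^*}^2}2-{\bm m}\cdot{\bm u}^*+E-{\bm B}\cdot{\bm B}^*+\frac{\abs{{\bm B}^*}^2}2,
\end{equation*}
so the second constraint in $\mathcal{G}_*$ reads $\Phi({\bm u}^*,{\bm B}^*)>0$ for all ${\bm u}^*,{\bm B}^*\in\mathbb{R}^3$.

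The key step is a minimization argument. Assuming $\rho>0$, the function $\Phi$ is a strictly convex quadratic in $({\bm u}^*,{\bm B}^*)$, since its Hessian $\mathrm{diag}(\rho I_3,I_3)$ is positive definite. Setting the gradients $\nabla_{{\bm u}^*}\Phi={\bm 0}$ and $\nabla_{{\bm B}^*}\Phi={\bm 0}$ to zero yields the unique global minimizer ${\bm u}^*={\bm m}/\rho$ and ${\bm B}^*={\bm B}$. Substituting these and using ${\bm m}=\rho{\bm u}$ (so that ${\bm m}\cdot({\bm m}/\rho)=\abs{{\bm m}}^2/\rho$), the minimum value simplifies to
\begin{equation*}
\min_{{\bm u}^*,{\bm B}^*}\Phi=E-\frac12\bigg(\frac{\abs{{\bm m}}^2}{\rho}+\abs{{\bm B}}^2\bigg)=\mathcal{E}(\bd U).
\end{equation*}
Hence, whenever $\rho>0$, the statement ``$\Phi({\bm u}^*,{\bm B}^*)>0$ for all ${\bm u}^*,{\bm B}^*$'' is equivalent to $\mathcal{E}(\bd U)>0$.

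Combining these observations yields both inclusions. For $\mathcal{G}\subseteq\mathcal{G}_*$: if $\rho>0$ and $\mathcal{E}(\bd U)>0$, then $\bd U\cdot{\bm n}_1=\rho>0$ and $\Phi\ge\min\Phi=\mathcal{E}(\bd U)>0$ everywhere. For $\mathcal{G}_*\subseteq\mathcal{G}$: the first constraint supplies $\rho>0$, which is precisely what licenses the minimization above, and the second constraint evaluated at the minimizer $({\bm m}/\rho,{\bm B})$ forces $\mathcal{E}(\bd U)>0$. The one point requiring care---and the only place a subtlety hides---is that the strict convexity and the existence of a finite, attained minimum of $\Phi$ hinge on $\rho>0$; without this the quadratic in ${\bm u}^*$ would be unbounded below and the equivalence would fail. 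Consequently, in the direction $\mathcal{G}_*\subseteq\mathcal{G}$ the positivity of $\rho$ must be extracted first, before the second constraint can be used to recover $\mathcal{E}(\bd U)>0$. Beyond this ordering, I expect no real difficulty, since every computation reduces to elementary completing-the-square manipulations.
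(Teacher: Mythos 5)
Your proposal is correct, and it is essentially the same argument as the original proof in \cite{WuSINUM2018,wu2023geometric} (the paper itself only quotes this lemma without proof): your minimization of the strictly convex quadratic in $({\bm u}^{*},{\bm B}^{*})$ is equivalent to the completing-the-square identity $\bd U\cdot {\bm n}^{*}+\frac{\abs{{\bm B}^{*}}^2}{2}=\mathcal{E}(\bd U)+\frac{\rho}{2}\abs{{\bm u}-{\bm u}^{*}}^2+\frac12\abs{{\bm B}-{\bm B}^{*}}^2$, valid for $\rho>0$, with attained minimizer $({\bm m}/\rho,{\bm B})$ and minimum value $\mathcal{E}(\bd U)$. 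Your observation that $\rho>0$ must be extracted from the first constraint before the minimization step is licensed is precisely the one subtlety of the argument, and you handle it correctly.
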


\Cref{Lem:GQL}, first established in \cite{WuSINUM2018} and later interpreted geometrically in \cite{wu2023geometric}, provides the foundation for our analysis.

We also recall a key technical lemma from \cite{WuSINUM2018}, which will be invoked in the proof.

\begin{lem}[Flux inequalities for GQL]\label{Lem:GLF}
	For any $\bd U,\bdtilde U \in \mathcal{G}$, ${\bm u}^{*}, {\bm B}^{*} \in \mathbb{R}^3$, and $\ell \in \{1,2,3\}$, the following hold:
	\begin{align*}
		-(\bd F_{\ell}(\bd U) - \bd F_{\ell}(\bdtilde U))\cdot{\bm n}_{1}
		&> -\alpha_{\ell}(\bd U,\bdtilde U)(\bd U + \bdtilde U)\cdot {\bm n}_1,
	\\
		-(\bd F_{\ell}(\bd U) - \bd F_{\ell}(\bdtilde U))\cdot{\bm n}^{*}
		&\geq -\alpha_{\ell}(\bd U,\bdtilde U)\big((\bd U + \bdtilde U)\cdot {\bm n}^{*} +\abs{{\bm B}^{*}}^2 \big)
		-(B_{\ell}-\Tilde B_{\ell})({\bm u}^{*}\cdot{\bm B}^{*}),
	\end{align*}
	where
	\begin{align*}
		\alpha_{\ell}(\bd U,\bdtilde U)
		&= \max\left\{ \abs{u_\ell}+\mathcal{C}_\ell,\abs{\Tilde u_\ell}+\mathcal{\Tilde C}_\ell,
		\frac{\sqrt{\rho}u_\ell+\sqrt{\Tilde \rho}\Tilde u_\ell}{\sqrt{\rho}+\sqrt{\Tilde \rho}}+\max\{\mathcal{C}_\ell,\mathcal{\Tilde C}_\ell\}\right\} + \frac{|{\bm B} - \Tilde {\bm B}|}{\sqrt{\rho}+\sqrt{\Tilde \rho}},
\\
		\mathcal{C}_\ell &= \frac1{\sqrt{2}}\left[\mathcal{C}_s^2+\frac{\abs{{\bm B}}^2}{\rho}+\sqrt{\left(\frac{\mathcal{C}_s^2 + \abs{{\bm B}}^2}{\rho}\right)^2-\frac{4\mathcal{C}_s^2B_\ell^2}{\rho}}\right]^{\frac12},\quad
		\mathcal{C}_s = \frac{p}{\rho\sqrt{2e}}.
	\end{align*}
\end{lem}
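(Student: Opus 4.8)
The plan is to prove the two inequalities separately, since they project the flux difference onto two different directions ${\bm n}_1$ and ${\bm n}^{*}$. Throughout, I would introduce the shifted variables $\bm v := \bm u - \bm u^{*}$, $\bm b := \bm B - \bm B^{*}$ (and $\Tilde{\bm v} := \Tilde{\bm u} - \bm u^{*}$, $\Tilde{\bm b} := \Tilde{\bm B} - \bm B^{*}$ for $\bdtilde U$), motivated by the algebraic identity
\begin{equation*}
\bd U \cdot {\bm n}^{*} + \frac{\abs{{\bm B}^{*}}^2}{2} = \rho e + \frac{\rho}{2}\abs{\bm v}^2 + \frac12\abs{\bm b}^2 =: \Phi(\bd U),
\end{equation*}
which one verifies by inserting $E = \rho e + \tfrac12\rho\abs{\bm u}^2 + \tfrac12\abs{\bm B}^2$ and completing squares in $\bm u^{*}$ and $\bm B^{*}$. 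By \Cref{Lem:GQL}, $\bd U \in \mathcal{G}$ forces $\rho>0$ and $\Phi(\bd U)>0$, so $\Phi$ is the natural nonnegative functional in which every estimate should be phrased.

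The first inequality is elementary. Since ${\bm n}_1$ extracts the density, we have $\bd U \cdot {\bm n}_1 = \rho$ and $\bd F_\ell(\bd U)\cdot{\bm n}_1 = m_\ell = \rho u_\ell$, so the claim reduces to $\rho u_\ell - \Tilde\rho\,\Tilde u_\ell < \alpha_\ell(\rho + \Tilde\rho)$. This follows at once from $\rho u_\ell - \Tilde\rho\,\Tilde u_\ell \le \rho\abs{u_\ell} + \Tilde\rho\abs{\Tilde u_\ell}$ together with $\alpha_\ell > \abs{u_\ell}$ and $\alpha_\ell > \abs{\Tilde u_\ell}$, the latter being strict because $\mathcal{C}_\ell > 0$ for admissible states (as $\mathcal{C}_s = p/(\rho\sqrt{2e}) > 0$) and $\rho,\Tilde\rho > 0$.

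The second inequality is the crux. I would first rearrange it into the single combined statement
\begin{equation*}
\big[\alpha_\ell \bd U \cdot {\bm n}^{*} - \bd F_\ell(\bd U)\cdot {\bm n}^{*}\big] + \big[\alpha_\ell \bdtilde U \cdot {\bm n}^{*} + \bd F_\ell(\bdtilde U)\cdot {\bm n}^{*}\big] + \alpha_\ell\abs{{\bm B}^{*}}^2 + (B_\ell - \Tilde B_\ell)({\bm u}^{*}\cdot{\bm B}^{*}) \ge 0,
\end{equation*}
and then expand $\bd F_\ell(\bd U)\cdot {\bm n}^{*}$ in the shifted variables. After substitution, the advective part of the flux organizes into $u_\ell\,\Phi(\bd U)$, while the pressure, magnetic-stress, and Poynting contributions collect into a remainder that is a quadratic form in $(\bm v,\bm b)$ with coefficients depending on $\rho, e, B_\ell$. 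The key single-state step is to bound the modulus of this remainder by $\mathcal{C}_\ell\,\Phi(\bd U)$; diagonalizing this quadratic form (taking its largest eigenvalue) is precisely what produces the fast-magnetosonic-type speed $\mathcal{C}_\ell$, with the generalized sound speed $\mathcal{C}_s = p/(\rho\sqrt{2e})$ emerging as the coefficient tied to the $\rho e$ reservoir. This gives the per-state estimate $\abs{\bd F_\ell(\bd U)\cdot{\bm n}^{*} - u_\ell\Phi(\bd U)} \le \mathcal{C}_\ell\,\Phi(\bd U)$, modulo the magnetic cross terms treated next.

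The main obstacle is the genuinely two-state magnetic coupling. The terms that link $\bm b$ of one state to $({\bm u}^{*}\cdot{\bm B}^{*})$ and to the velocity of the other state cannot be absorbed by any single-state functional $\Phi$: they are responsible for the explicit residual $-(B_\ell - \Tilde B_\ell)({\bm u}^{*}\cdot{\bm B}^{*})$ on the right-hand side (the very term reflecting that MHD positivity requires a discrete divergence-free condition), and they force the extra contribution $\frac{\abs{{\bm B}-\Tilde{\bm B}}}{\sqrt{\rho}+\sqrt{\Tilde\rho}}$ in $\alpha_\ell$. I would control them by Cauchy--Schwarz on the magnetic differences followed by a density-weighted Young's inequality that distributes the coupling between the $\frac{\rho}{2}\abs{\bm v}^2$ reservoirs of the two states; this balancing is exactly what produces both the density-weighted (Roe-type) average velocity $\frac{\sqrt{\rho}\,u_\ell+\sqrt{\Tilde\rho}\,\Tilde u_\ell}{\sqrt{\rho}+\sqrt{\Tilde\rho}}$ and the $\max\{\mathcal{C}_\ell,\mathcal{\Tilde C}_\ell\}$ appearing in $\alpha_\ell$. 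Summing the two per-state estimates and invoking $\alpha_\ell$ as the maximum of these three candidate speeds plus the cross correction then closes the inequality.
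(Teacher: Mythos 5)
Your proposal has the right scaffolding but a genuine gap at its core. (Note first that the paper never proves \Cref{Lem:GLF} at all --- it is recalled verbatim from \cite{WuSINUM2018}, so that reference's proof is the only benchmark.) Your treatment of the first inequality is correct, and your identity $\bd U\cdot{\bm n}^{*}+\frac12\abs{{\bm B}^{*}}^2=\rho e+\frac{\rho}{2}\abs{{\bm v}}^2+\frac12\abs{{\bm b}}^2=:\Phi(\bd U)$ is indeed the backbone of the real proof. The gap is your ``key single-state step.'' A direct computation gives
\begin{equation*}
\bd F_{\ell}(\bd U)\cdot{\bm n}^{*}=u_\ell\,(\bd U\cdot{\bm n}^{*})+p_{\text{tot}}\,v_\ell-B_\ell({\bm v}\cdot{\bm b})+B_\ell({\bm u}^{*}\cdot{\bm B}^{*}),
\end{equation*}
where $v_\ell$, $u^{*}_\ell$ denote components of ${\bm v}$, ${\bm u}^{*}$, so that
\begin{equation*}
\bd F_{\ell}(\bd U)\cdot{\bm n}^{*}-u_\ell\Phi(\bd U)-B_\ell({\bm u}^{*}\cdot{\bm B}^{*})=-\tfrac{u_\ell}{2}\abs{{\bm B}^{*}}^2+p_{\text{tot}}\,v_\ell-B_\ell({\bm v}\cdot{\bm b}).
\end{equation*}
The right-hand side is \emph{not} a quadratic form in $({\bm v},{\bm b})$: substituting ${\bm B}^{*}={\bm B}-{\bm b}$ produces the cubic term $-\frac{v_\ell}{2}\abs{{\bm b}}^2$ and the term $-\frac{u^{*}_\ell}{2}\abs{{\bm b}}^2$ whose coefficient is a free, unbounded parameter. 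Consequently the claimed per-state bound $\abs{\bd F_{\ell}(\bd U)\cdot{\bm n}^{*}-u_\ell\Phi(\bd U)}\le\mathcal{C}_\ell\,\Phi(\bd U)$ cannot hold: letting $\abs{{\bm B}^{*}}\to\infty$ with the state and ${\bm u}^{*}$ fixed, the ratio of the two sides tends to $\abs{u_\ell}/\mathcal{C}_\ell$, which exceeds $1$ for every state with $\abs{u_\ell}>\mathcal{C}_\ell$ --- precisely the supersonic, strongly magnetized regimes the lemma is needed for. Nor is this failing term covered by your ``magnetic cross terms'': it couples $\abs{{\bm B}^{*}}^2$ to the velocity of the \emph{same} state, not to $({\bm u}^{*}\cdot{\bm B}^{*})$ or to the other state.

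The missing idea is the mechanism that makes the inequality genuinely two-state. Writing $u_\ell=v_\ell+u^{*}_\ell$ and $\Tilde u_\ell=\Tilde v_\ell+u^{*}_\ell$, the dangerous pieces $-\frac{u^{*}_\ell}{2}\abs{{\bm B}^{*}}^2$ are identical for the two states and cancel exactly in the flux difference, while the $B_\ell({\bm u}^{*}\cdot{\bm B}^{*})$-type terms cancel identically against the explicit residual $-(B_\ell-\Tilde B_\ell)({\bm u}^{*}\cdot{\bm B}^{*})$; contrary to your description, those couplings therefore force nothing. What survives is the cross term $u^{*}_\ell\,\frac{\abs{{\bm b}}^2-\abs{\Tilde{\bm b}}^2}{2}$, still carrying the unbounded coefficient $u^{*}_\ell$. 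The proof in \cite{WuSINUM2018} neutralizes it via
\begin{equation*}
u^{*}_\ell=\frac{\sqrt{\rho}\,u_\ell+\sqrt{\Tilde\rho}\,\Tilde u_\ell}{\sqrt{\rho}+\sqrt{\Tilde\rho}}-\frac{\sqrt{\rho}\,v_\ell+\sqrt{\Tilde\rho}\,\Tilde v_\ell}{\sqrt{\rho}+\sqrt{\Tilde\rho}},
\end{equation*}
whose first (Roe-average) part has a state-controlled coefficient, and whose second part, multiplied against $\abs{{\bm b}}^2-\abs{\Tilde{\bm b}}^2=({\bm B}-\Tilde{\bm B})\cdot({\bm b}+\Tilde{\bm b})$, is absorbed by density-weighted Young inequalities into $\frac{\abs{{\bm B}-\Tilde{\bm B}}}{\sqrt{\rho}+\sqrt{\Tilde\rho}}\big(\Phi(\bd U)+\Phi(\bdtilde U)\big)$. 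This --- not the $({\bm u}^{*}\cdot{\bm B}^{*})$ couplings --- is the true origin of both the Roe-type velocity and the $\abs{{\bm B}-\Tilde{\bm B}}$ correction in $\alpha_\ell$. Only after this cancellation does one face a legitimate single-state estimate, namely $\abs{p\,v_\ell+(v_\ell{\bm B}-B_\ell{\bm v})\cdot{\bm b}}\le\mathcal{C}_\ell\big(\rho e+\frac{\rho}{2}\abs{{\bm v}}^2+\frac12\abs{{\bm b}}^2\big)$, and it is to this reduced quantity that your quadratic-form/largest-eigenvalue argument (producing $\mathcal{C}_\ell$ and $\mathcal{C}_s$) correctly applies. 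As written, with the per-state bound imposed on $\bd F_{\ell}\cdot{\bm n}^{*}-u_\ell\Phi$ itself, your argument cannot be completed.
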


With the GQL representation in \Cref{Lem:GQL} and the flux estimates in \cref{Lem:GLF}, we are now in a position to rigorously establish the PP property of the updated cell averages $\overline{\bd U}_{\hat{\imath}, \hat{\jmath}}^{\star,\sharp,n+1}$ for the proposed PosDiv-CDG schemes.

\begin{thm}\label{Thm:PP}
Assume that the modified dissipation terms are constructed using the improved CAD described in \Cref{rem:ocad}.
Then, for the $\mathbb{P}^k$-based PosDiv-CDG method, the updated cell averages $\overline{\bd U}_{i,j}^{\star,C,n+1} \in \mathcal{G}_{*}$ and $\overline{\bd U}_{i+\frac12,j+\frac12}^{\star,D,n+1} \in \mathcal{G}_{*}$
are provably admissible under the following CFL-type condition:
\begin{align}\label{CFL}
	\Delta t \left( \frac{\Hat\alpha_1}{\Delta x} + \frac{\Hat\alpha_2}{\Delta y} \right)
	\leq \frac{\theta\, \Bar\omega_{\star}}{2}, \qquad
	\theta := \frac{\Delta t}{\tau_{\max}} \in (0,1],
\end{align}
where $\Bar\omega_{\star}$ is given in \Cref{rem:ocad}, and the maximal wave speed estimates are
	\begin{align*}
		& \Hat\alpha_1 = \max_{\sigma=\pm 1}\bigg\{ \max_{i,j,\mu} \alpha_1
		\Big(\bdtilde U_{\iph,{j+\frac{\sigma}4}}^{(\mu),D},
		\bdtilde U_{\imh,{j+\frac{\sigma}4}}^{(\mu),D}\Big),~
		\max_{i,j,\mu} \alpha_1 \Big(\bdtilde U_{i+1,{j+\frac12+\frac{\sigma}4}}^{(\mu),C},
		\bdtilde U_{i,{j+\frac12+\frac{\sigma}4}}^{(\mu),C}\Big) \bigg\}, \\
		& \Hat\alpha_2 = \max_{\sigma=\pm 1}\bigg\{ \max_{i,j,\mu} \alpha_2
		\Big(\bdtilde U_{{i+\frac{\sigma}4},\jph}^{(\mu),D},
		\bdtilde U_{{i+\frac{\sigma}4},\jmh}^{(\mu),D}\Big),~
		\max_{i,j,\mu} \alpha_2 \Big(\bdtilde U_{{i+\frac12+\frac{\sigma}4},j+1}^{(\mu),C},
		\bdtilde U_{{i+\frac12+\frac{\sigma}4},j}^{(\mu),C}\Big) \bigg\}.
	\end{align*}
\end{thm}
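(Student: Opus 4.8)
The plan is to invoke the GQL representation of \Cref{Lem:GQL}, which converts the nonlinear requirement $\overline{\bd U}_{i,j}^{\star,C,n+1}\in\mathcal G$ into the two \emph{linear} conditions $\overline{\bd U}_{i,j}^{\star,C,n+1}\cdot{\bm n}_1>0$ and $\overline{\bd U}_{i,j}^{\star,C,n+1}\cdot{\bm n}^{*}+\tfrac12\abs{{\bm B}^{*}}^2>0$, to be verified for arbitrary $({\bm u}^{*},{\bm B}^{*})$. First I would assemble the full updated average from its two evolutions. Because the auxiliary magnetic averages \eqref{eqn:cdg-c-bx-ave}--\eqref{eqn:cdg-c-by-ave} use the \emph{same} CAD as the zeroth-moment $\bd R$-update \eqref{ocad} and the same electric-field values $G(\bdtilde U)$ at the shared nodes, the combined dual quantity formed from $\widetilde{\bf R}_{i,j}^{D}$ and $\widetilde{\bf Q}_{i,j}^{D}$ is precisely the CAD convex combination $\sum_q c_q\,\bdtilde U_{i,j}^{(q),D}$ with $c_q>0$, $\sum_q c_q=1$. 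Since every limited value $\bdtilde U_{i,j}^{(q),D}\in\mathcal G$ by Step~4, and using $\phi\equiv1$ (so the volume flux integral vanishes and only the outer-edge fluxes of $C_{i,j}$ survive), the update can be written as
\[
\overline{\bd U}_{i,j}^{\star,C,n+1}=(1-\theta)\,\overline{\bd U}_{i,j}^{\star,C,n}+\theta\Big(\sum_q c_q\,\bdtilde U_{i,j}^{(q),D}-\frac{\Delta t}{\theta}\,\mathcal F_{i,j}\Big),
\]
where $\mathcal F_{i,j}$ collects the boundary flux differences of $C_{i,j}$ in the $x$- and $y$-directions.

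Next I would regroup the bracketed expression into admissible building blocks. The CAD nodes in $\mathbb Q_{i,j}^{\rm CAD}$ split into interior nodes and the outer-boundary Gauss nodes coinciding with the flux quadrature points of $\mathcal F_{i,j}$. The interior nodes enter directly and, lying in $\mathcal G_{*}$, contribute strictly positively to both GQL functionals. Each boundary node is then paired \emph{across the cell} --- the right-edge value $\bdtilde U_{\iph,j+\frac\sigma4}^{(\mu),D}$ with the left-edge value $\bdtilde U_{\imh,j+\frac\sigma4}^{(\mu),D}$, and analogously the top/bottom pairs in $y$ --- to form a Lax--Friedrichs-type state; these are exactly the pairs from which the wave-speed bounds $\Hat\alpha_1,\Hat\alpha_2$ are built. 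The balanced boundary weights $\Tilde\omega_x,\Tilde\omega_y$ of \Cref{rem:ocad}, together with the CFL condition \eqref{CFL}, guarantee that each block carries a nonnegative weight and a Lax--Friedrichs coefficient $\lambda$ satisfying $\lambda\Hat\alpha_\ell\le1$, so that \Cref{Lem:GLF} applies and the whole update is a genuine convex combination of admissible building blocks.

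The decisive step concerns the energy functional. For the density functional, the first inequality of \Cref{Lem:GLF} makes each directional block strictly positive with no leftover term, so $\overline{\bd U}_{i,j}^{\star,C,n+1}\cdot{\bm n}_1>0$ follows at once. For the energy functional, applying the second inequality of \Cref{Lem:GLF} to each block yields a lower bound of the form $(1-\lambda\Hat\alpha_\ell)\big[\tfrac12\big((\bd U_a+\bd U_b)\cdot{\bm n}^{*}+\abs{{\bm B}^{*}}^2\big)\big]\ge0$ \emph{plus} a residual proportional to $-(B_\ell^a-B_\ell^b)\,({\bm u}^{*}\!\cdot{\bm B}^{*})$. \emph{The main obstacle is to show that these residuals cancel exactly after summing over all boundary nodes and both directions.} Since the $N=k+1$-point Gauss rule integrates the edgewise polynomial traces of $\bd Q_h^{D}$ exactly, the weighted residual sum equals $-({\bm u}^{*}\!\cdot{\bm B}^{*})\,\tfrac{\Delta t}{\Delta x\,\Delta y}\oint_{\partial C_{i,j}}\bd Q_h^{D}\cdot{\bm n}\,{\rm d}s$, the discrete normal magnetic flux through $\partial C_{i,j}$. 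Splitting $C_{i,j}$ into its four dual-cell quadrants, local divergence-freeness makes the flux through each quadrant boundary vanish, while continuity of the normal component of $\bd Q_h^{D}$ across the interior lines $x=x_i$ and $y=y_j$ --- a property of the globally DF reconstruction that is retained at the quadrature nodes precisely because the divergence-restoration step of the PP limiter reinstates the original $(B_1,B_2)$ --- cancels the interior-edge contributions. Hence the boundary flux, and with it the total residual, is exactly zero. Combining the strictly positive interior and $(1-\theta)$ contributions with the nonnegative Lax--Friedrichs parts then gives the energy inequality, establishing $\overline{\bd U}_{i,j}^{\star,C,n+1}\in\mathcal G_{*}$. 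The dual-cell average $\overline{\bd U}_{i+\frac12,j+\frac12}^{\star,D,n+1}$ is handled identically with the roles of the primal and dual solutions interchanged, which is the source of the primal-solution pairs appearing in $\Hat\alpha_1,\Hat\alpha_2$.
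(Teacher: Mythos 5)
Your proposal is correct and follows essentially the same route as the paper's proof: the GQL reduction of \Cref{Lem:GQL} to two linear functionals, the rewriting of the update as $(1-\theta)\bdbar U_{i,j}^{\star,C}+\theta\,{\bf \Pi}_{i,j}^{D}+\Delta t\,{\bf \Pi}_{i,j}^{F}$ with the CAD convex combination of limited states, the Lax--Friedrichs-type grouping of boundary CAD nodes with the flux quadrature points via \Cref{Lem:GLF} under the CFL condition \eqref{CFL}, and the exact cancellation of the $({\bm u}^{*}\!\cdot{\bm B}^{*})$ residuals through the globally DF property combined with the exactness of the Gauss quadrature on the edge traces. Your quadrant-by-quadrant justification of why $\oint_{\partial C_{i,j}}\bd Q_h^{D}\cdot{\bm n}\,{\rm d}s=0$ (local divergence-freeness plus normal continuity across $x=x_i$, $y=y_j$) is in fact a more careful rendering of the step the paper compresses into a single appeal to the divergence theorem.
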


\begin{proof}
We present the proof for $\overline{\bd U}_{i,j}^{\star,C,n+1} \in \mathcal{G}_{*}$; the argument for $\overline{\bd U}_{i+\frac12,j+\frac12}^{\star,D,n+1} \in \mathcal{G}_{*}$ follows analogously.

The update of the full cell average $\overline{\bd U}_{i,j}^{\star,C,n+1}$, composed of $\overline{\bd R}_{i,j}^{\star,C,n+1}$ and $\overline{\bf Q}_{i,j}^{\star,C,n+1}$, can be written as
	\begin{align}\label{eqn:evolve-U}
		\bdbar U_\ij^{\star,C,n+1}
		= & (1-\theta) \bdbar U_\ij^{\star,C} + \theta {\bf \Pi}_\ij^{D} + \DT {\bf \Pi}_\ij^{F},
	\end{align}
where ${\bf \Pi}_{i,j}^{D}$ consists of $\widetilde{\bd R}_{i,j}^D$ and $(\widetilde{B}_\ell)_{i,j}^{D,n}$ in the modified dissipation terms, and can be decomposed as
	\begin{align}\label{eq:PiD}
		{\bf \Pi}_\ij^{D} = {\bf \Pi}_\ij^{D,1} + {\bf \Pi}_\ij^{D,2} + {\bf \Pi}_\ij^{D,3}
	\end{align}
	with ${\bf \Pi}_\ij^{D,1} = \frac14\sum_{\sigma',\sigma=\pm1} \sum_{s=1}^{S} \Tilde\omega_s \bdtilde U_{i+\frac{\sigma'}4,j+\frac{\sigma}4}^{(s),D}$ and
	\begin{align*}
		& {\bf \Pi}_\ij^{D,2} = \frac14\sum_{\sigma=\pm1}\sum_{\mu=1}^N \omega_\mu \bigg( \Tilde\omega_x\Big(
		\bdtilde U_{i,j+\frac{\sigma}4}^{-,(\mu),D}
		+ \bdtilde U_{i,j+\frac{\sigma}4}^{+,(\mu),D} \Big)
		+ \Tilde\omega_y\Big( \bdtilde U_{i+\frac{\sigma}4,j}^{(\mu),-,D}
		+ \bdtilde U_{i+\frac{\sigma}4,j}^{(\mu),+,D} \Big) \bigg), \\
		& {\bf \Pi}_\ij^{D,3} = \frac14\sum_{\sigma=\pm1}\sum_{\mu=1}^N \omega_\mu \bigg( \Tilde\omega_x\Big(
		\bdtilde U_{i+\frac12,j+\frac{\sigma}4}^{(\mu),D}
		+ \bdtilde U_{i-\frac12,j+\frac{\sigma}4}^{(\mu),D} \Big)
		+ \Tilde\omega_y\Big( \bdtilde U_{i+\frac{\sigma}4,j+\frac12}^{(\mu),D}
		+ \bdtilde U_{i+\frac{\sigma}4,j-\frac12}^{(\mu),D}  \Big) \bigg).
	\end{align*}
The term ${\bf \Pi}_{i,j}^{F}$ in \eqref{eqn:evolve-U}  arises from the discrete flux integrals in \eqref{eqn:cdg-c-bx-ave}-\eqref{eqn:cdg-c-by-ave} and \eqref{eqn:cdg-c}, evaluated via Gauss quadrature with the modified point values:
	\begin{align}
		{\bf \Pi}_\ij^{F} = & -\frac{1}{\DX}\sum_{\sigma=\pm1}\sum_{\mu=1}^{N}\frac{\omega_\mu}2
		\left( \bd F_1\big(\bdtilde U_{\iph,j+\frac{\sigma}4}^{(\mu),D}\big)
		- \bd F_1\big(\bdtilde U_{\imh,j+\frac{\sigma}4}^{(\mu),D}\big) \right) \notag \\
		& - \frac{1}{\DY}\sum_{\sigma=\pm1}\sum_{\mu=1}^{N}\frac{\omega_\mu}2
		\left( \bd F_2\big(\bdtilde U_{i+\frac{\sigma}4,\jph}^{(\mu),D}\big)
		- \bd F_2\big(\bdtilde U_{i+\frac{\sigma}4,\jmh}^{(\mu),D}\big) \right).
	\end{align}
    Applying \cref{Lem:GLF}, we derive the following lower bounds:
	\begin{align}
		{\bf \Pi}_\ij^{F} \cdot {\bm n}_1
		\geq & -\frac{1}{\DX}\sum_{\sigma=\pm1}\sum_{\mu=1}^{N}\frac{\omega_\mu}2
		\Hat\alpha_1 \Big(\bdtilde U_{\iph,{j+\frac{\sigma}4}}^{(\mu),D} +
		\bdtilde U_{\imh,{j+\frac{\sigma}4}}^{(\mu),D}\Big) \cdot {\bm n}_1 \notag \\
		& - \frac{1}{\DY}\sum_{\sigma=\pm1}\sum_{\mu=1}^{N}\frac{\omega_\mu}2
		\Hat\alpha_2 \Big(\bdtilde U_{{i+\frac{\sigma}4},\jph}^{(\mu),D} +
		\bdtilde U_{{i+\frac{\sigma}4},\jmh}^{(\mu),D}\Big) \cdot {\bm n}_1, \label{eq:PiFn1}
	\end{align}
	\begin{align}\label{eq:flux_nstar}
		{\bf \Pi}_\ij^{F} \cdot {\bm n}^*
		\geq & - \frac{1}{\DX}\sum_{\sigma=\pm1}\sum_{\mu=1}^{N}\frac{\omega_\mu}2
		\Hat\alpha_1 \left( \Big(\bdtilde U_{\iph,j+\frac{\sigma}4}^{(\mu),D} +
		\bdtilde U_{\imh,j+\frac{\sigma}4}^{(\mu),D}\Big) \cdot {\bm n}^* + \abs{{\bm B}^{*}}^2 \right) \notag \\
		& - \frac{1}{\DY}\sum_{\sigma=\pm1}\sum_{\mu=1}^{N}\frac{\omega_\mu}2
		\Hat\alpha_2 \left( \Big(\bdtilde U_{i+\frac{\sigma}4,\jph}^{(\mu),D} +
		\bdtilde U_{i+\frac{\sigma}4,\jmh}^{(\mu),D}\Big) \cdot {\bm n}^* + \abs{{\bm B}^{*}}^2 \right) \notag \\
		& - \frac{1}{\DX}\sum_{\sigma=\pm1}\sum_{\mu=1}^{N}\frac{\omega_\mu}2
		\Big((B_1)_{\iph,j+\frac{\sigma}4}^{(\mu),D} -
		(B_1)_{\imh,j+\frac{\sigma}4}^{(\mu),D}\Big)({\bm u}^{*}\cdot{\bm B}^{*}) \notag \\
		& - \frac{1}{\DY}\sum_{\sigma=\pm1}\sum_{\mu=1}^{N}\frac{\omega_\mu}2
		\Big( (B_2)_{i+\frac{\sigma}4,\jph}^{(\mu),D} -
		(B_2)_{i+\frac{\sigma}4,\jmh}^{(\mu),D} \Big)({\bm u}^{*}\cdot{\bm B}^{*}).
	\end{align}
	Thanks to the globally DF property, we have $0=\int_{ C_{i,j} } \nabla \cdot {\bf Q}_{h}^{D,n} {\rm d}x {\rm d}y = \int_{\partial C_{i,j}} {\bf Q}_{h}^{D,n} \cdot {\bf n} ds$,
	where the second equality follows from the divergence theorem. Owing to the exactness of the Gauss quadrature for integrals of ${\bf Q}_{h}^{D,n} \cdot {\bf n}$ along $\partial C_{i,j}$, the discrete magnetic field satisfies
	\begin{align}\notag
		\sum_{\sigma=\pm1}\sum_{\mu=1}^{N}\frac{\omega_\mu}2
		\left( \frac{(B_1)_{\iph,j+\frac{\sigma}4}^{(\mu),D} -
			(B_1)_{\imh,j+\frac{\sigma}4}^{(\mu),D}}{\DX}
		+ \frac{(B_2)_{i+\frac{\sigma}4,\jph}^{(\mu),D} -
			(B_2)_{i+\frac{\sigma}4,\jmh}^{(\mu),D}}{\DY} \right) =0,
	\end{align}
which ensures that all terms involving $({\bm u}^* \cdot {\bm B}^*)$ in \eqref{eq:flux_nstar} cancel exactly. Consequently, inequality \eqref{eq:flux_nstar} simplifies to
	\begin{align}\label{eq:PiF}
		{\bf \Pi}_\ij^{F} \cdot {\bm n}^*
		\geq & - \frac{1}{\DX}\sum_{\sigma=\pm1}\sum_{\mu=1}^{N}\frac{\omega_\mu}2
		\Hat\alpha_1 \left( \Big(\bdtilde U_{\iph,j+\frac{\sigma}4}^{(\mu),D} +
		\bdtilde U_{\imh,j+\frac{\sigma}4}^{(\mu),D}\Big) \cdot {\bm n}^* + \abs{{\bm B}^{*}}^2 \right) \notag \\
		& - \frac{1}{\DY}\sum_{\sigma=\pm1}\sum_{\mu=1}^{N}\frac{\omega_\mu}2
		\Hat\alpha_2 \left( \Big(\bdtilde U_{i+\frac{\sigma}4,\jph}^{(\mu),D} +
		\bdtilde U_{i+\frac{\sigma}4,\jmh}^{(\mu),D}\Big) \cdot {\bm n}^* + \abs{{\bm B}^{*}}^2 \right).
	\end{align}
Substituting \eqref{eq:PiFn1} and \eqref{eq:PiF} into \eqref{eqn:evolve-U} and combining with the convex decomposition of the dissipation terms \eqref{eq:PiD}, we obtain the following estimates:
	\begin{align*}
		& \bdbar U_\ij^{\star,C,n+1} \cdot {\bm n}_1  = \Big( (1-\theta) \bdbar U_\ij^{\star,C} + \theta \big( {\bf \Pi}_\ij^{D,1} + {\bf \Pi}_\ij^{D,2} + {\bf \Pi}_\ij^{D,3} \big) + \DT {\bf \Pi}_\ij^{F} \Big) \cdot {\bm n}_1 \notag \\
		& \quad \geq (1-\theta) \bdbar U_\ij^{\star,C} \cdot {\bm n}_1 + \theta \Big( {\bf \Pi}_\ij^{D,1} + {\bf \Pi}_\ij^{D,2} \Big) \cdot {\bm n}_1 \notag \\
		& \quad\quad + \frac{\theta}2\sum_{\sigma=\pm1}\sum_{\mu=1}^N \frac{\omega_\mu}2
		\bigg( \Tilde\omega_x \Big(\bdtilde U_{\iph,{j+\frac{\sigma}4}}^{(\mu),D} +
		\bdtilde U_{\imh,{j+\frac{\sigma}4}}^{(\mu),D}\Big)
		+ \Tilde\omega_y \Big(\bdtilde U_{{i+\frac{\sigma}4},\jph}^{(\mu),D} +
		\bdtilde U_{{i+\frac{\sigma}4},\jmh}^{(\mu),D}\Big) \bigg) \cdot {\bm n}_1 \notag \\
		& \quad\quad -\frac{\DT}{\DX}\sum_{\sigma=\pm1}\sum_{\mu=1}^{N}\frac{\omega_\mu}2
		\Hat\alpha_1  \Big(\bdtilde U_{\iph,{j+\frac{\sigma}4}}^{(\mu),D} +
		\bdtilde U_{\imh,{j+\frac{\sigma}4}}^{(\mu),D}\Big) \cdot {\bm n}_1 \notag \\
		& \quad\quad - \frac{\DT}{\DY}\sum_{\sigma=\pm1}\sum_{\mu=1}^{N}\frac{\omega_\mu}2
		\Hat\alpha_2  \Big(\bdtilde U_{{i+\frac{\sigma}4},\jph}^{(\mu),D} +
		\bdtilde U_{{i+\frac{\sigma}4},\jmh}^{(\mu),D}\Big) \cdot {\bm n}_1 \notag \\
		& \quad = (1-\theta) \bdbar U_\ij^{\star,C} \cdot {\bm n}_1 + \theta \Big( {\bf \Pi}_\ij^{D,1} + {\bf \Pi}_\ij^{D,2} \Big) \cdot {\bm n}_1 \notag \\
		& \quad\quad + \bigg(\frac{\theta}2 \Tilde\omega_x -\frac{\DT}{\DX} \Hat \alpha_1 \bigg) \sum_{\sigma=\pm1}\sum_{\mu=1}^{N}\frac{\omega_\mu}2
		\Big(\bdtilde U_{\iph,{j+\frac{\sigma}4}}^{(\mu),D} +
		\bdtilde U_{\imh,{j+\frac{\sigma}4}}^{(\mu),D}\Big) \cdot {\bm n}_1 \notag \\
		& \quad\quad + \bigg(\frac{\theta}2 \Tilde\omega_y -\frac{\DT}{\DY} \Hat \alpha_2 \bigg)
		\sum_{\sigma=\pm1}\sum_{\mu=1}^{N}\frac{\omega_\mu}2
		\Big(\bdtilde U_{{i+\frac{\sigma}4},\jph}^{(\mu),D} +
		\bdtilde U_{{i+\frac{\sigma}4},\jmh}^{(\mu),D}\Big) \cdot {\bm n}_1,
\\
		& \bdbar U_\ij^{\star,C,n+1} \cdot {\bm n}^* + \abs{{\bm B}^{*}}^2  = \Big( (1-\theta) \bdbar U_\ij^{\star,C} + \theta \big( {\bf \Pi}_\ij^{D,1} + {\bf \Pi}_\ij^{D,2} + {\bf \Pi}_\ij^{D,3} \big) + \DT {\bf \Pi}_\ij^{F} \Big) \cdot {\bm n}^* + \abs{{\bm B}^{*}}^2 \notag \\
		& \qquad \geq (1-\theta) \Big( \bdbar U_\ij^{\star,C} + \abs{{\bm B}^{*}}^2 \Big) + \theta \Big(\big( {\bf \Pi}_\ij^{D,1} + {\bf \Pi}_\ij^{D,2} \big)  + \abs{{\bm B}^{*}}^2 \Big) \notag \\
		& \qquad \quad + \frac{\theta}2\sum_{\sigma=\pm1}\sum_{\mu=1}^N \frac{\omega_\mu}2
		\Bigg( \Tilde\omega_x \bigg( \Big( \bdtilde U_{\iph,{j+\frac{\sigma}4}}^{(\mu),D} +
		\bdtilde U_{\imh,{j+\frac{\sigma}4}}^{(\mu),D}\Big) \cdot {\bm n}^* + \abs{{\bm B}^{*}}^2\bigg) \Bigg) \notag \\
		& \qquad \quad + \frac{\theta}2\sum_{\sigma=\pm1}\sum_{\mu=1}^N \frac{\omega_\mu}2
		\Bigg( \Tilde\omega_y \bigg( \Big(\bdtilde U_{{i+\frac{\sigma}4},\jph}^{(\mu),D} +
		\bdtilde U_{{i+\frac{\sigma}4},\jmh}^{(\mu),D}\Big) \cdot {\bm n}^* + \abs{{\bm B}^{*}}^2\bigg) \Bigg) \notag \\
		& \qquad \quad - \frac{\DT}{\DX}\sum_{\sigma=\pm1}\sum_{\mu=1}^{N}\frac{\omega_\mu}2
		\Hat\alpha_1  \left( \Big(\bdtilde U_{\iph,{j+\frac{\sigma}4}}^{(\mu),D} +
		\bdtilde U_{\imh,{j+\frac{\sigma}4}}^{(\mu),D}\Big) \cdot {\bm n}^* + \abs{{\bm B}^{*}}^2 \right) \notag \\
		& \qquad \quad - \frac{\DT}{\DY}\sum_{\sigma=\pm1}\sum_{\mu=1}^{N}\frac{\omega_\mu}2
		\Hat\alpha_2 \left( \Big(\bdtilde U_{{i+\frac{\sigma}4},\jph}^{(\mu),D} +
		\bdtilde U_{{i+\frac{\sigma}4},\jmh}^{(\mu),D}\Big) \cdot {\bm n}^* + \abs{{\bm B}^{*}}^2 \right) \notag \\
		& \qquad = (1-\theta) \Big( \bdbar U_\ij^{\star,C} + \abs{{\bm B}^{*}}^2 \Big) + \theta \Big(\big( {\bf \Pi}_\ij^{D,1} + {\bf \Pi}_\ij^{D,2} \big)  + \abs{{\bm B}^{*}}^2 \Big) \notag \\
		& \qquad \quad + \bigg(\frac{\theta}2 \Tilde\omega_x -\frac{\DT}{\DX} \Hat \alpha_1 \bigg) \sum_{\sigma=\pm1}\sum_{\mu=1}^{N}\frac{\omega_\mu}2
		\left( \Big(\bdtilde U_{\iph,{j+\frac{\sigma}4}}^{(\mu),D} +
		\bdtilde U_{\imh,{j+\frac{\sigma}4}}^{(\mu),D}\Big) \cdot {\bm n}^* + \abs{{\bm B}^{*}}^2 \right) \notag \\
		& \qquad \quad + \bigg(\frac{\theta}2 \Tilde\omega_y -\frac{\DT}{\DY} \Hat \alpha_2 \bigg)
		\sum_{\sigma=\pm1}\sum_{\mu=1}^{N}\frac{\omega_\mu}2
		\left( \Big(\bdtilde U_{{i+\frac{\sigma}4},\jph}^{(\mu),D} +
		\bdtilde U_{{i+\frac{\sigma}4},\jmh}^{(\mu),D}\Big) \cdot {\bm n}^* + \abs{{\bm B}^{*}}^2 \right).
	\end{align*}
Under the CFL-type condition \eqref{CFL}, it follows that $\bdbar U_\ij^{\star,C,n+1} \cdot {\bm n}_1 > 0$ and $\bdbar U_\ij^{\star,C,n+1} \cdot {\bm n}^* + \abs{{\bm B}^{*}}^2 > 0$ for any ${\bm u}^{*},{\bm B}^{*} \in \mathbb{R}^3$.
By the GQL representation in \cref{Lem:GQL}, we conclude that $\bdbar U_\ij^{\star,C,n+1} \in \mathcal{G}_{*} = \mathcal{G}$.
Using analogous arguments, one can similarly show that $\bdbar U_\ijph^{\star,D,n+1} \in \mathcal{G}_{*} = \mathcal{G}$. The proof is completed.
\end{proof}

\subsection{High-order time discretization}\label{subsec:RK}
For clarity, the preceding discussions focused on the first-order forward Euler time discretization.
To attain higher-order temporal accuracy, one may instead employ SSP RK or multi-step methods~\cite{gottlieb2001strong}.
Due to their essence as convex combinations of forward Euler steps, SSP methods preserve the PP and globally DF properties.
\section{Numerical experiments}\label{sec:exp}

This section presents a series of benchmark and challenging tests to demonstrate the accuracy and robustness of the proposed PosDiv-CDG methods.
We consider the third-order PosDiv-CDG scheme with polynomial degree $k=2$, coupled with the explicit third-order SSP RK time discretization~\cite{gottlieb2001strong}.
Unless otherwise specified, the ideal EOS $p=(\gamma - 1)\rho e$ with $\gamma = 5/3$ is used.
The CFL number is set to $0.2$, and the parameter $\theta = \Delta t/\tau_{\max}$ is set to $1$.

\begin{Exp}[Smooth Alfv\'en Wave]\label{Exp:alfven}
\rm	We first consider the smooth Alfv\'en wave problem~\cite{toth2000b,li2008high}, involving a circularly polarized wave propagating at an angle $\alpha = \pi/4$ to the $x$-axis over the domain $[0,1/\cos\alpha] \times [0,1/\sin\alpha]$. The initial conditions are
	\begin{align*}
		(\rho,~u_{\|},~u_{\bot},~u_3,~B_{\|},~B_{\bot},~B_3,~p) = (1,~0,~0.1\sin(2\pi\beta),~0.1\cos(2\pi\beta),~1,~u_{\bot},~u_3,~0.1),
	\end{align*}
	where $\beta = x\cos\alpha + y\sin\alpha$, and the subscripts $\|$ and $\bot$ denote directions parallel and perpendicular to the wave propagation. Periodic boundary conditions are applied. The wave travels toward the origin $(0,0)$ at a constant Alfv\'en speed $B_{\|}/\sqrt{\rho} = 1$, returning to its initial state at $t = 1$. The simulation is run up to this time.
	Table~\ref{table:order-alfven} reports the $l^1$, $l^2$, and $l^\infty$ errors in $B_1$ along with the convergence rates, demonstrating that the third-order PosDiv-CDG scheme achieves the expected accuracy. This implies the COE procedure and modified dissipation terms maintain the high-order accuracy.
\end{Exp}

\begin{table}[!thb]
	\vspace*{0.2cm}
	\small
	\caption{Example~\ref{Exp:alfven}: Numerical errors in $B_1$ at $t=1$ and corresponding convergence orders of the third-order PosDiv-CDG scheme.}
	\vspace*{-0.2cm}
	\label{table:order-alfven}
	\medskip
	\centering
	\begin{tabular}{ccccccc}
		\toprule
		$N_x\times N_y$ \quad
		&  $l^1$ error      \quad &   order \quad
		&  $l^2$ error \quad &   order \quad
		&  $l^\infty$ error      \quad &   order \quad \\
		\midrule
		$20 \times 20$   & 6.331E-04 & {---}   & 4.962E-04 & {---}   & 5.047E-04 & {---}   \\
		$40 \times 40$   & 7.501E-05 & 3.077   & 5.883E-05 & 3.076   & 6.000E-05 & 3.072   \\
		$80 \times 80$   & 9.216E-06 & 3.025   & 7.231E-06 & 3.024   & 7.371E-06 & 3.025   \\
		$160 \times 160$ & 1.146E-06 & 3.007   & 8.994E-07 & 3.007   & 9.163E-07 & 3.008   \\
		$320 \times 320$ & 1.430E-07 & 3.002   & 1.122E-07 & 3.002   & 1.143E-07 & 3.003   \\
		$640 \times 640$ & 1.787E-08 & 3.001   & 1.402E-08 & 3.001   & 1.427E-08 & 3.001   \\
		\bottomrule
	\end{tabular}
\end{table}

\begin{Exp}[Smooth Vortex Problem]\label{Exp:vortex1}
	\rm
This example further assesses the accuracy of the proposed method using a smooth isentropic vortex with low pressure, following the setup in~\cite{christlieb2015positivity,wu2018provably}. The initial conditions are
\begin{align*}
	(\rho,~\bm u,~\bm B,~p) = (1,~1+\delta u_1,~1+\delta u_2,~0,~\delta B_1,~\delta B_2,~0,~1+\delta p),
\end{align*}
with perturbations
$(\delta u_1,\delta u_2) = \frac{\mu}{\sqrt{2\pi}}e^{0.5(1-r^2)}(-y,x)$,
$(\delta B_1,\delta B_2) = \frac{\mu}{2\pi}e^{0.5(1-r^2)}(-y,x)$,
$\delta p = -\frac{\mu^2(1+r^2)}{8\pi^2}e^{1-r^2}$,
where $r = \sqrt{x^2 + y^2}$ and $\mu = 5.389489439$ denotes the vortex strength.
The simulation runs up to $t = 0.05$ in the domain $[-10, 10] \times [-10,10]$ with periodic boundaries. Table~\ref{table:order-vortex} lists the $l^1$ errors and convergence orders for  $m_1$, $B_1$, and $E$ using the third-order PosDiv-CDG scheme.
The results confirm that the scheme achieves the optimal convergence order, demonstrating that our PP technique, modified dissipation terms, and COS procedure preserve the high-order accuracy.

\end{Exp}

\begin{table}[!thb]
    \vspace*{0.2cm}
    \small
    \caption{Example~\ref{Exp:vortex1}: $l^1$ errors in ${m}_1$, ${B}_1$, and ${E}$ at $t = 0.05$, along with the corresponding convergence orders for the third-order PosDiv-CDG scheme.}
    \vspace*{-0.2cm}
    \label{table:order-vortex}
    \medskip
    \centering
    \begin{tabular}{ccccccc}
    \toprule
    \multicolumn{1}{c}{\multirow{2}*{$N_x\times N_y$}} & \multicolumn{2}{c}{$m_1$} & \multicolumn{2}{c}{$B_1$} & \multicolumn{2}{c}{$E$}\\
    \cmidrule(lr){2-3} \cmidrule(lr){4-5} \cmidrule(lr){6-7}
                  &$l^1$ error   &order    &$l^1$ error    &order      &$l^1$ error    &order\\
    \midrule
    $20 \times 20$   & 3.570E-01 & {---}   & 2.829E-01 & {---}   & 7.426E-01 & {---}   \\
    $40 \times 40$   & 4.854E-02 & 2.879   & 3.193E-02 & 3.147   & 1.040E-01 & 2.836   \\
    $80 \times 80$   & 5.790E-03 & 3.068   & 3.768E-03 & 3.083   & 1.163E-02 & 3.161   \\
    $160 \times 160$ & 5.557E-04 & 3.381   & 4.499E-04 & 3.066   & 1.090E-03 & 3.416   \\
    $320 \times 320$ & 5.765E-05 & 3.269   & 5.478E-05 & 3.038   & 1.204E-04 & 3.178   \\
    $640 \times 640$ & 6.643E-06 & 3.118   & 6.766E-06 & 3.017   & 1.432E-05 & 3.072   \\
    \bottomrule
    \end{tabular}
\end{table}

\begin{Exp}[Orszag--Tang Problem]\label{Exp:OrszagTang}
\rm
This benchmark is widely used to assess the ability of numerical schemes to resolve complex MHD dynamics. Although the initial state is smooth, nonlinear interactions among MHD waves quickly generate a range of discontinuities and fine-scale structures.
The initial conditions are
\begin{align*}
	(\rho,~\bm u,~\bm B,~p) = (\gamma^2,\ -\sin y,\ \sin x,\ 0,\ -\sin y,\ \sin 2x,\ 0,\ \gamma)
\end{align*}
on the domain $[0, 2\pi] \times [0, 2\pi]$, discretized with $200 \times 200$ uniform cells and periodic boundary conditions.
\Cref{Fig:Orszag-Tang} shows the density contours at $t = 0.5$, $2$, and $3$ computed with the third-order PosDiv-CDG method. As the solution evolves, complex structures emerge, including interacting shocks and turbulence-like patterns. The results agree well with those from  \cite{li2011central, li2012arbitrary, wu2023provably}, demonstrating the robustness and effectiveness of the proposed PosDiv-CDG method.
\end{Exp}

\begin{figure}[!thb]
    \vspace*{-0.2cm}
	\centering
    {	
        \includegraphics[width=0.31\textwidth]{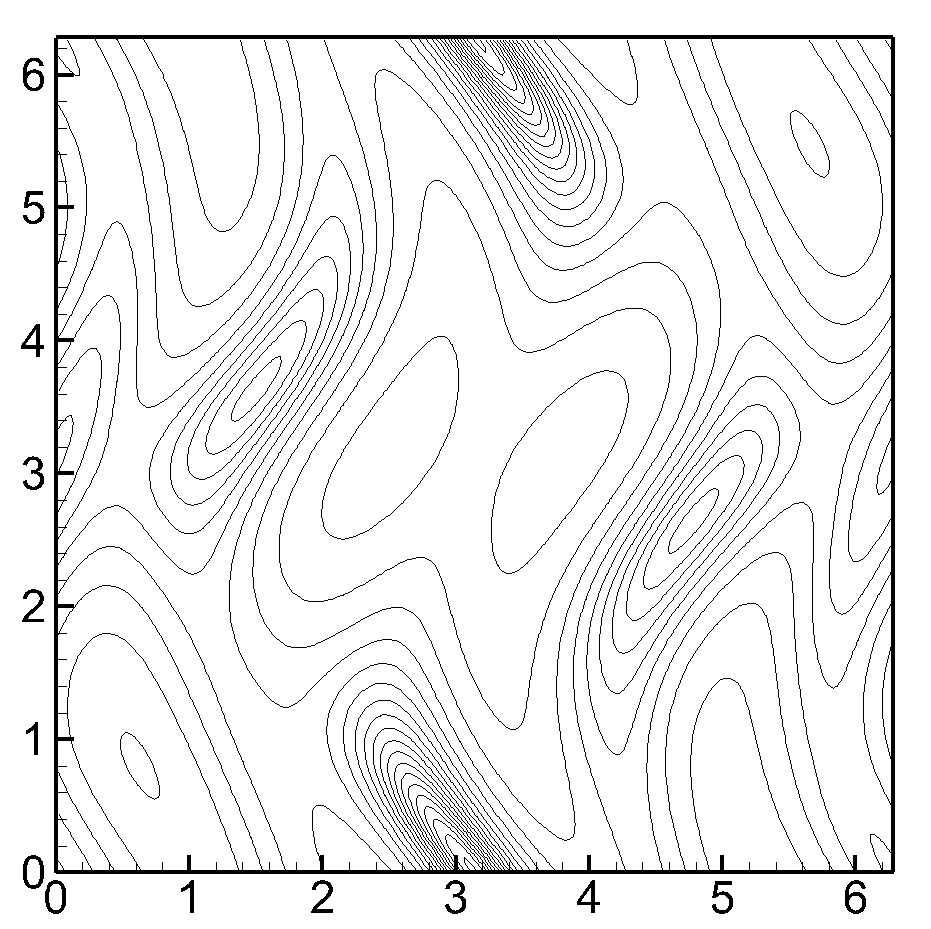}
    }
    {	
        \includegraphics[width=0.31\textwidth]{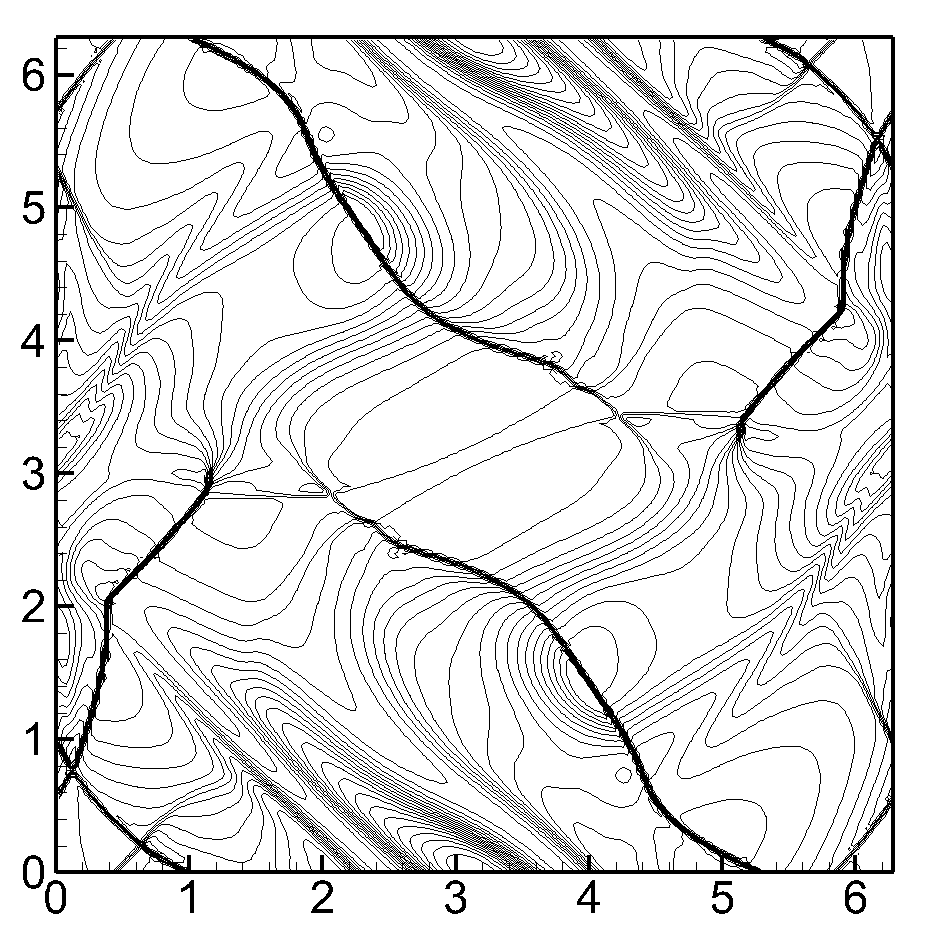}
    }
    {	
        \includegraphics[width=0.31\textwidth]{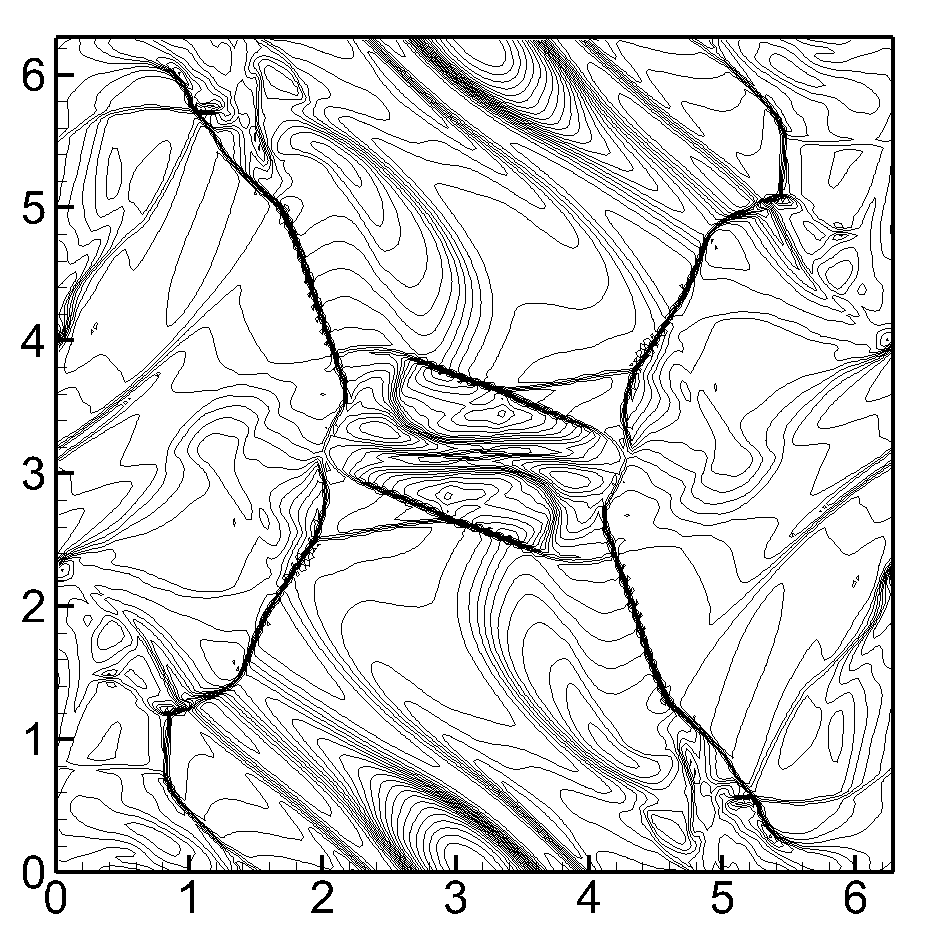}
    }
	\caption{\small Example \ref{Exp:OrszagTang}: Contour plots of  $\rho$ at $t=0.5$, $t=2$, and $t=3$ (from left to right).}
    \label{Fig:Orszag-Tang}
\end{figure}

\begin{Exp}[Rotor Problem]\label{Exp:Rotor}
\rm
This widely used benchmark \cite{balsara1999staggered} involves a dense, rapidly rotating disk. The initial conditions are
\begin{align*}
\big(\rho,~\bm u,~\bm B,~p\big) =
\begin{cases}
\big(10,~-\frac{y-0.5}{r_1},~\frac{x-0.5}{r_1},~0,~\frac{2.5}{\sqrt{4\pi}},~0,~0,~0.5\big),& r\leq r_1,\\
\big(1+9\phi,~-\frac{\phi(y-0.5)}{r},~\frac{\phi(x-0.5)}{r},~0,~\frac{2.5}{\sqrt{4\pi}},~0,~0,~0.5\big),& r_1<r\leq r_2,\\
\big(1,~0,~0,~0,~\frac{2.5}{\sqrt{4\pi}},~0,~0,~0.5\big),& r_2<r,
\end{cases}
\end{align*}
where $r = \sqrt{(x-0.5)^2 + (y-0.5)^2}$, $r_1 = 0.1$, $r_2 = 0.115$, and $\phi = (r_2 - r)/(r_2 - r_1)$.
The simulation is run on the domain $[0,1] \times [0,1]$ using $400 \times 400$ uniform cells and periodic boundary conditions. \Cref{Fig:Rotor problem} shows the computed contours of density $\rho$, pressure $p$, and Mach number $\abs{\bm u} / c$ at $t = 0.295$, where $c = \sqrt{\gamma p / \rho}$ is the local sound speed. The results exhibit sharp, well-resolved features and align well with those in~\cite{balsara1999staggered,li2011central, li2012arbitrary, wu2023provably}, confirming the high resolution of the proposed PosDiv-CDG scheme.
Thanks to the proposed COS procedure, the numerical solution is free of spurious oscillations.
\end{Exp}

\begin{figure}[!thb]
    \vspace*{-0.2cm}
	\centering
	{
		\includegraphics[width=0.31\linewidth]{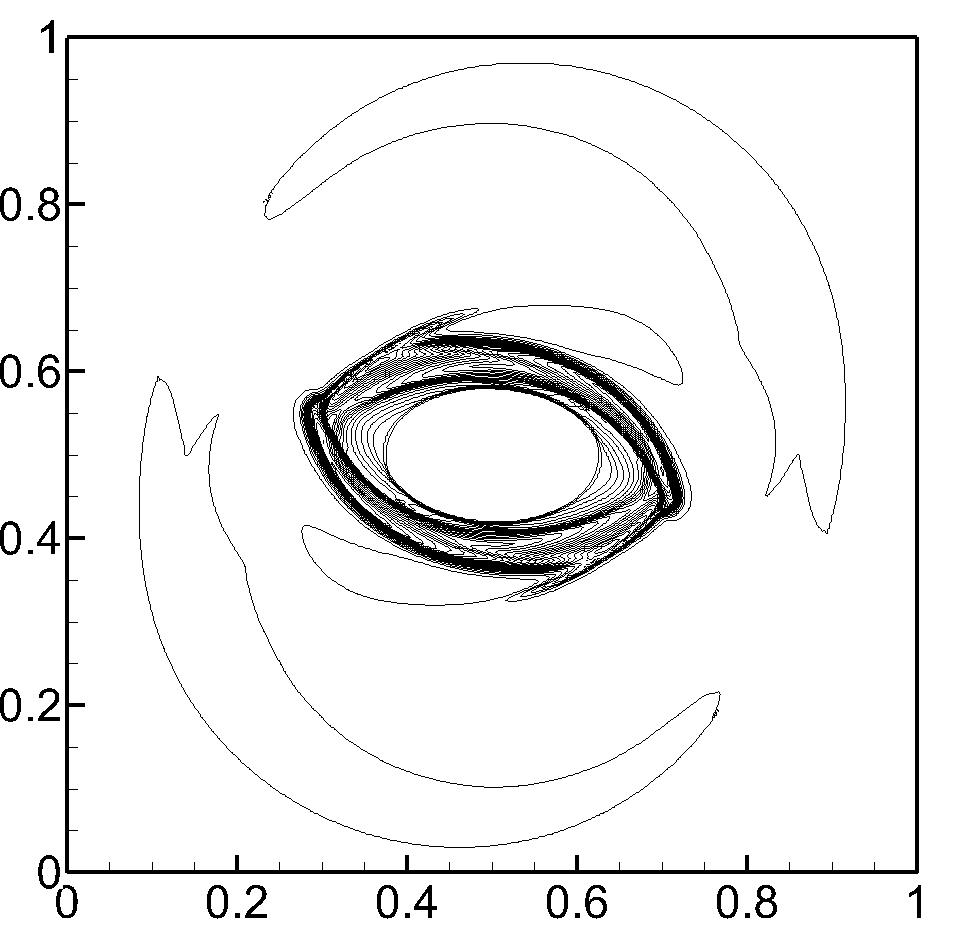}
    }
	{
		\includegraphics[width=0.31\linewidth]{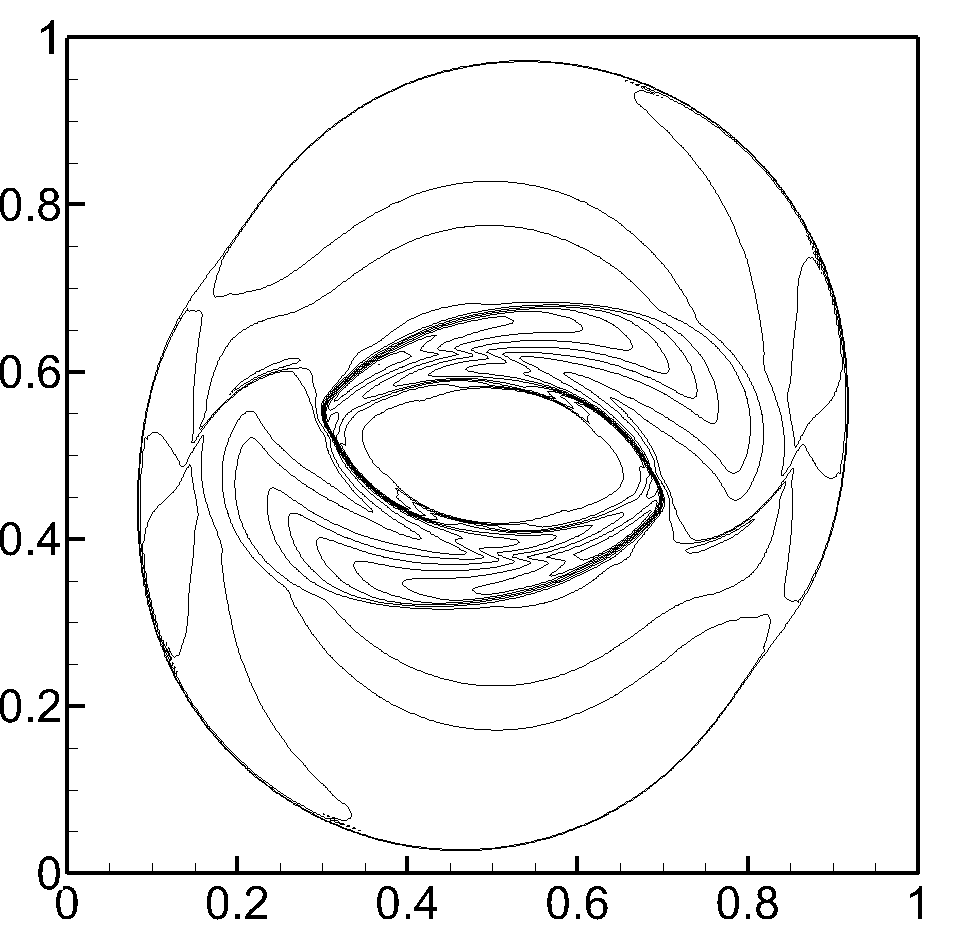}
	}
	{
		\includegraphics[width=0.31\linewidth]{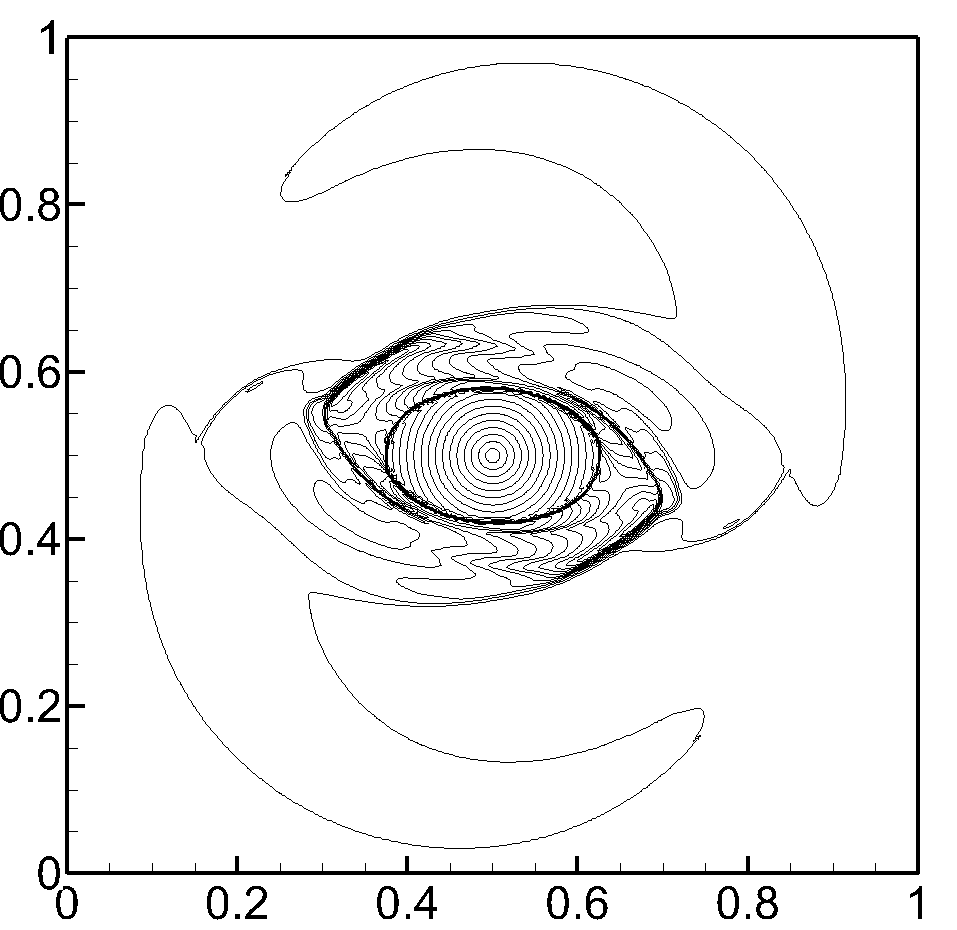}
	}
 \caption{\small Example~\ref{Exp:Rotor}: Contour plots of $\rho$ (left), $p$ (center), and $\abs{\bm u} / c$ (right) at $t = 0.295$.}
	\label{Fig:Rotor problem}
\end{figure}

\begin{Exp}[Blast Problems]\label{Exp:Blast}
\rm
This example verifies the PP property of the PosDiv-CDG scheme and demonstrates the effectiveness of our entropy-induced COS procedure in suppressing nonphysical oscillations. We consider three challenging blast problems~\cite{balsara1999staggered, li2011central, wu2018provably}, characterized by extremely low gas pressure and strong magnetosonic shocks. The initial conditions are
\begin{align*}
\big(\rho,~\bm u,~\bm B,~p\big) =
\begin{cases}
\big( 1,~0,~0,~0,~B_x,~B_y,~0,~p_0 \big), & \mbox{if }\sqrt{x^2+y^2} \leq 0.1,\\
\big( 1,~0,~0,~0,~B_x,~B_y,~0,~0.1 \big), & \text{otherwise},
\end{cases}
\end{align*}
with $\gamma = 1.4$. The computational domain $[-0.5, 0.5] \times [-0.5, 0.5]$ is discretized using $320 \times 320$ uniform cells, and outflow boundary conditions are applied on all sides. We test the following three configurations:
\begin{enumerate}[label=(\roman*)]
  \item $B_x = 100/\sqrt{4\pi}$, $B_y = 0$ (plasma-beta $\beta \approx 2.51\times10^{-4}$), and $p_0 = 10^3$;
  \item $B_x = 1000/\sqrt{4\pi}$, $B_y = 0$ (plasma-beta $\beta \approx 2.51\times10^{-6}$), and $p_0 = 10^4$;
  \item $B_x = B_y = 100/\sqrt{8\pi}$ (plasma-beta $\beta \approx 2.51\times10^{-4}$), and $p_0 = 10^3$.
\end{enumerate}
\Cref{Fig:Blast problem} shows contour plots of the density $\rho$, thermal pressure $p$, and magnetic pressure $p_m$ at $t=0.01$, $t=0.001$, and $t=0.01$ for the three cases, respectively. This test is highly sensitive to the oscillation control strategy. While the oscillation-eliminating technique developed in~\cite{liu2025structure,PSW} performs well in locally DF DG frameworks, it does not fully suppress small nonphysical oscillations when applied directly in globally DF CDG methods.
In contrast, our entropy-induced COS procedure significantly enhances robustness by effectively eliminating nonphysical oscillations and preserving the physical realism of the solution. The numerical results show good agreement with those in~\cite{li2011central, christlieb2015positivity, wu2018provably}, demonstrating the reliability and effectiveness of our PosDiv-CDG method.
\end{Exp}

\begin{figure}[!thb]
    \vspace*{-0.2cm}
	\centering  
    {
		\includegraphics[width=0.31\linewidth]{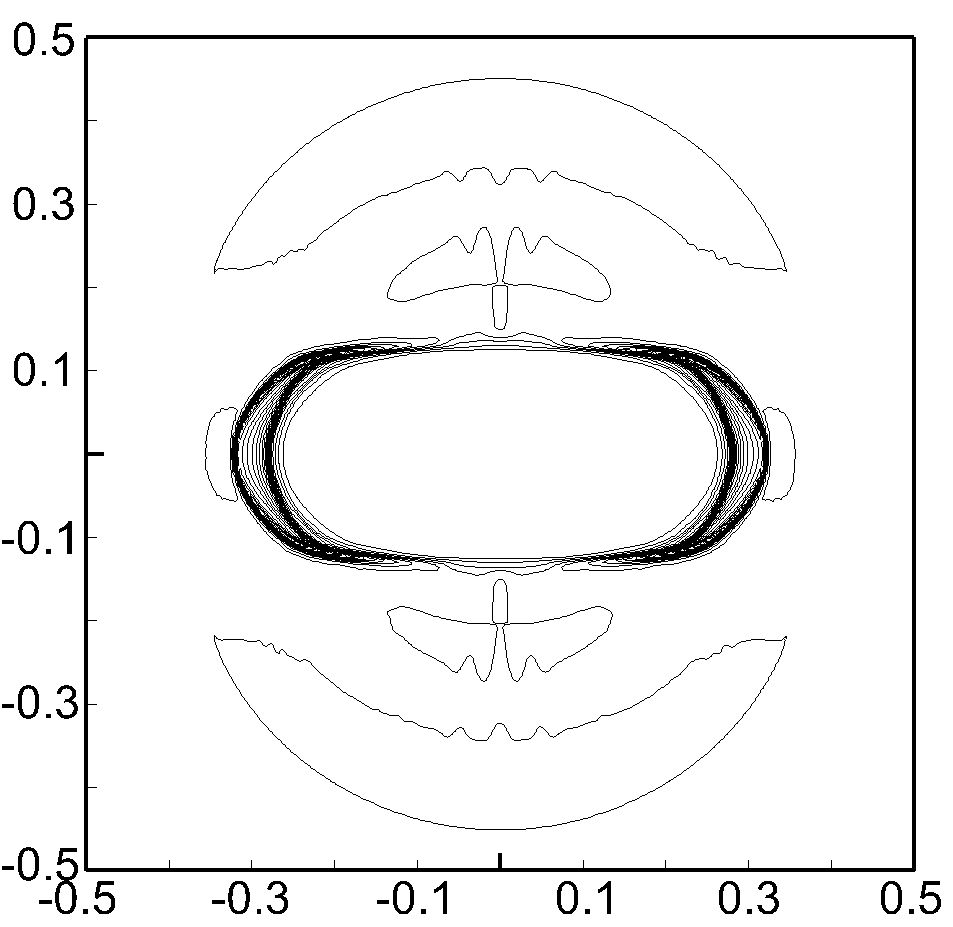}
    }
    {
		\includegraphics[width=0.31\linewidth]{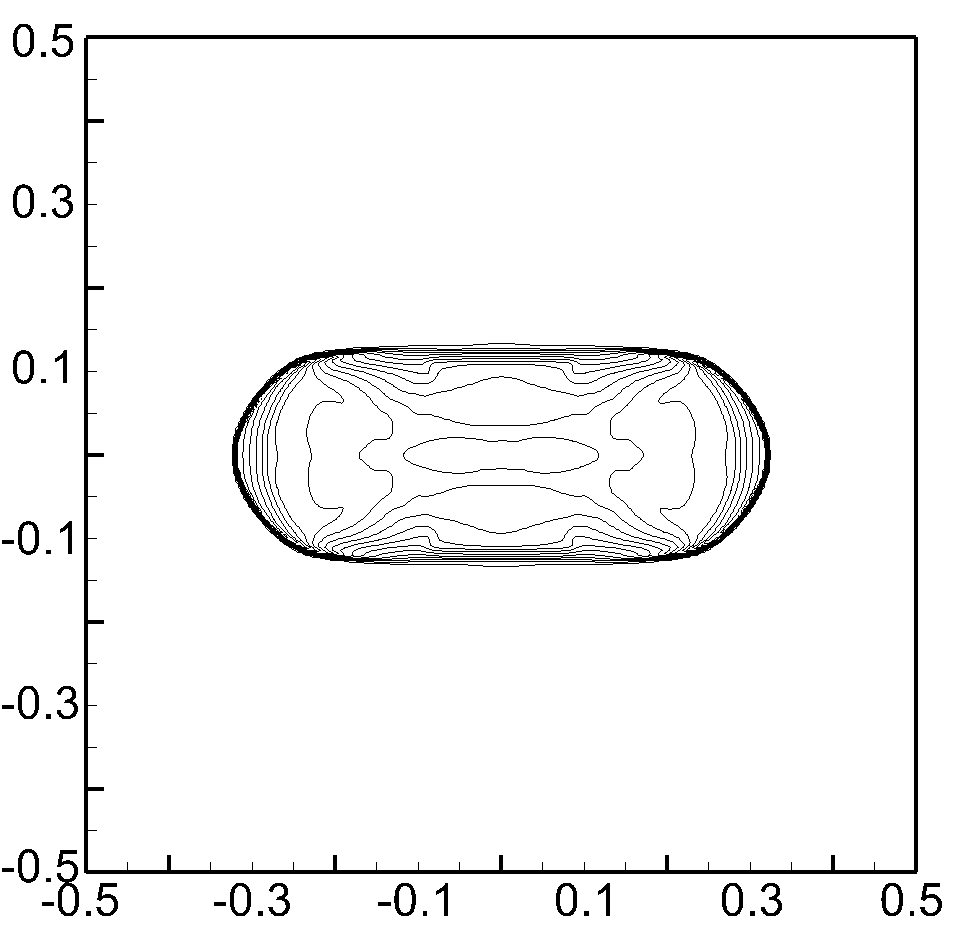}
    }
    {
		\includegraphics[width=0.31\linewidth]{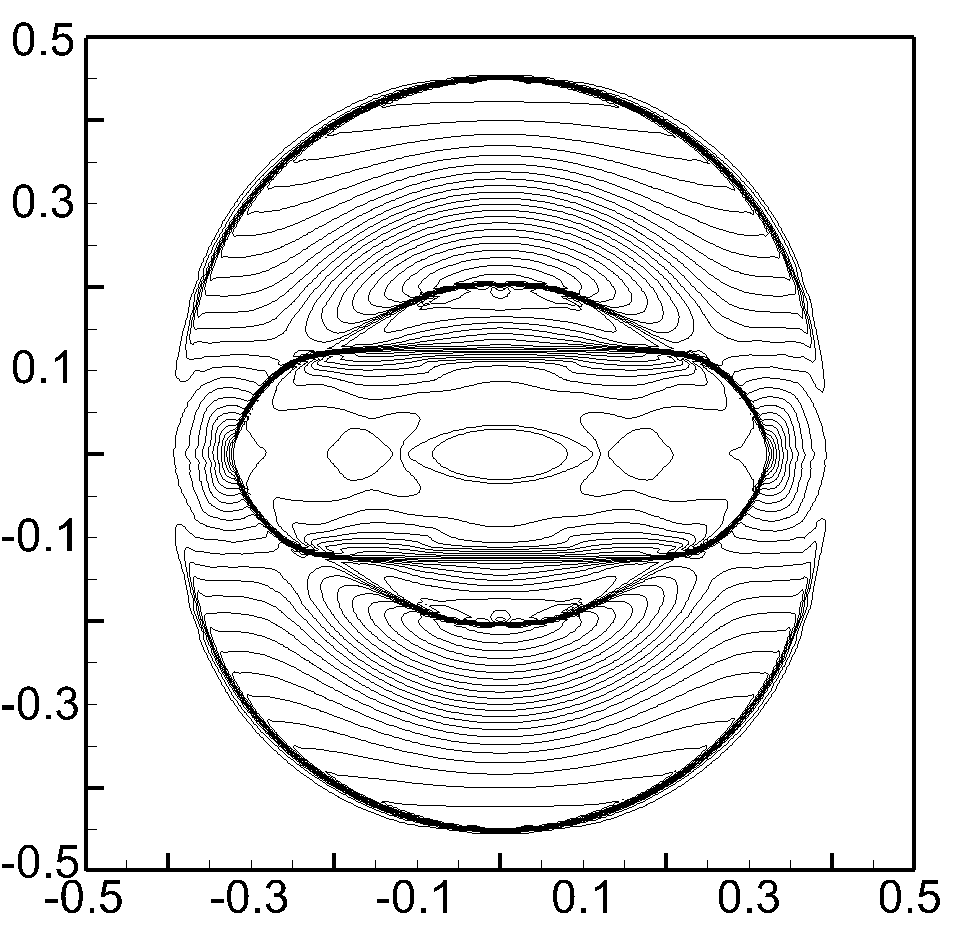}
    }

    {
		\includegraphics[width=0.31\linewidth]{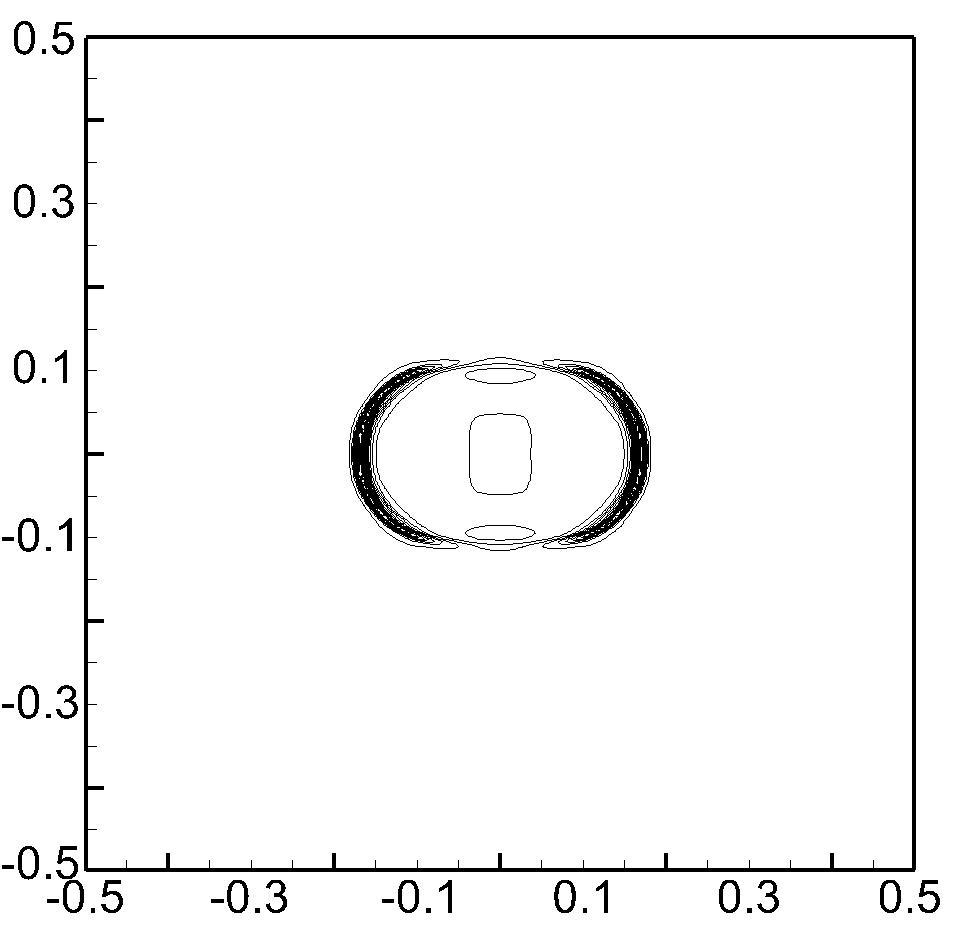}
    }
    {
		\includegraphics[width=0.31\linewidth]{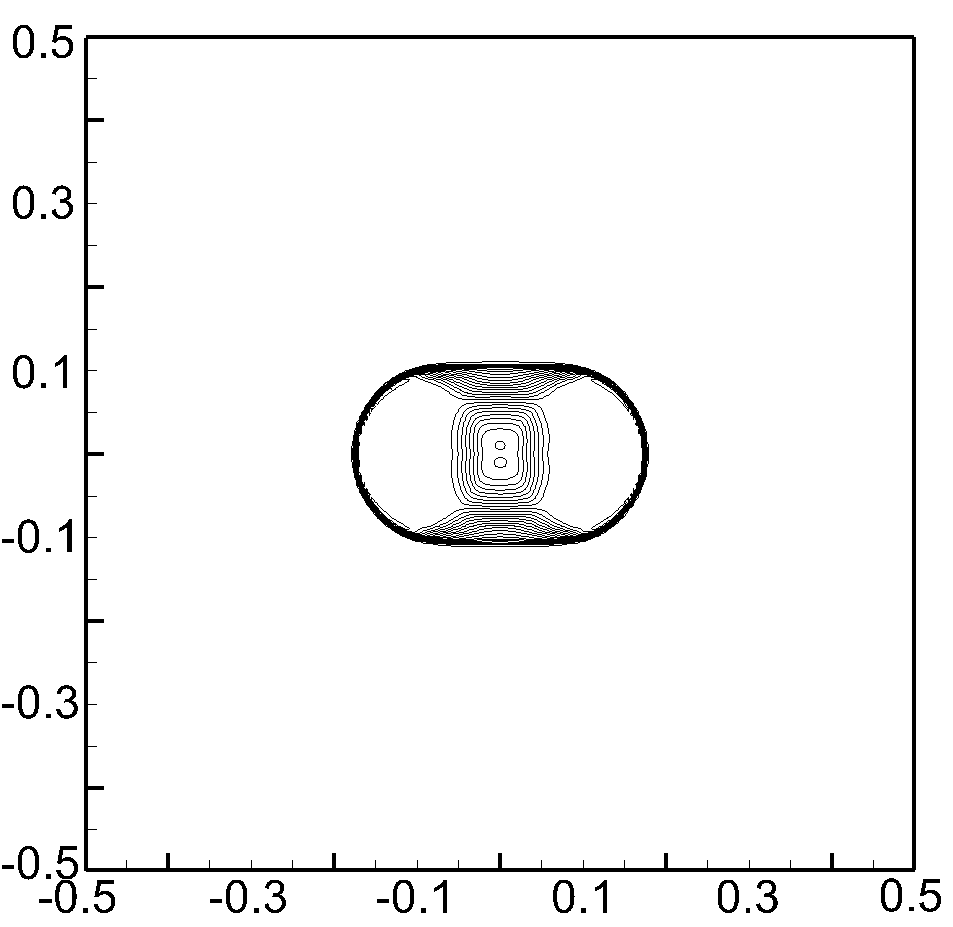}
    }
    {
		\includegraphics[width=0.31\linewidth]{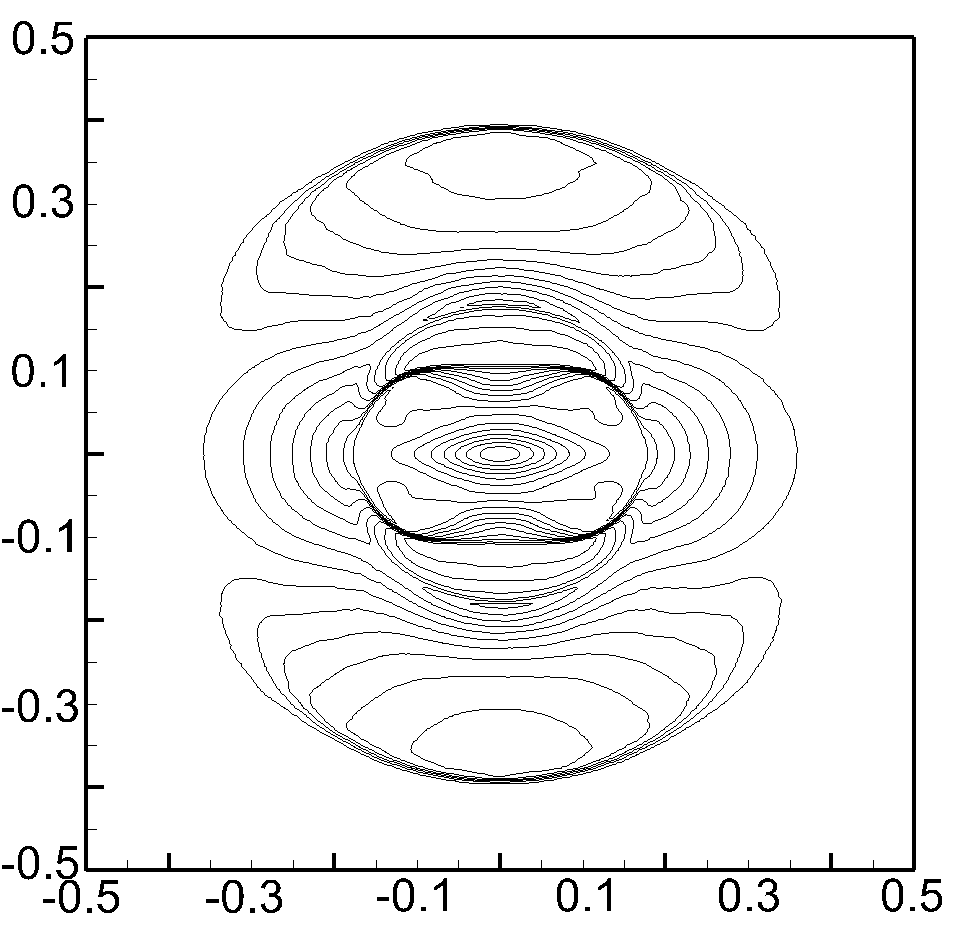}
    }

    {
		\includegraphics[width=0.31\linewidth]{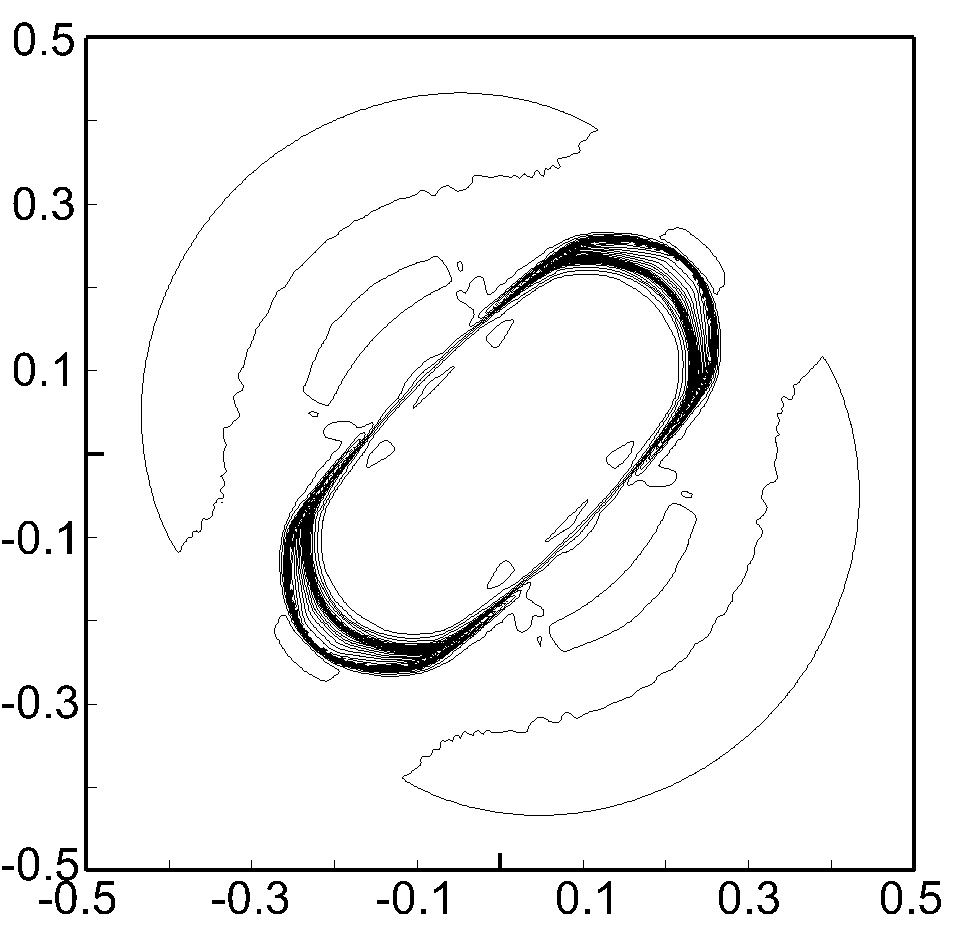}
    }
    {
		\includegraphics[width=0.31\linewidth]{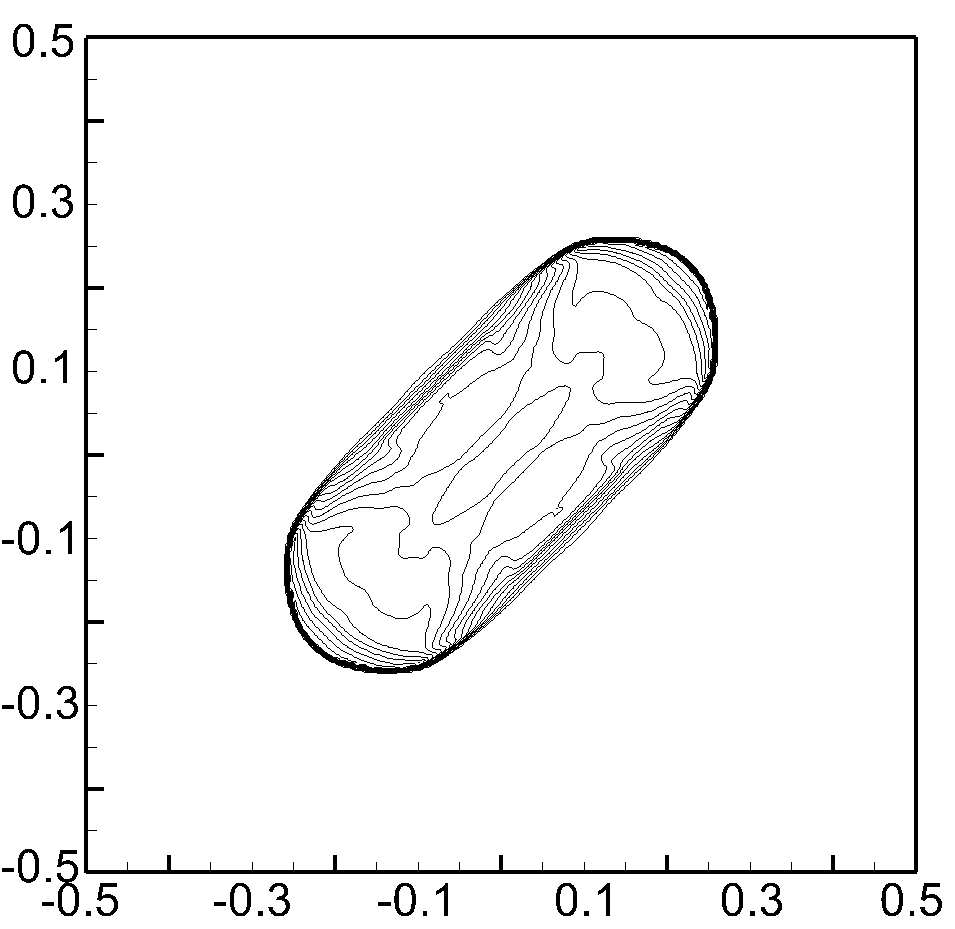}
    }
    {
		\includegraphics[width=0.31\linewidth]{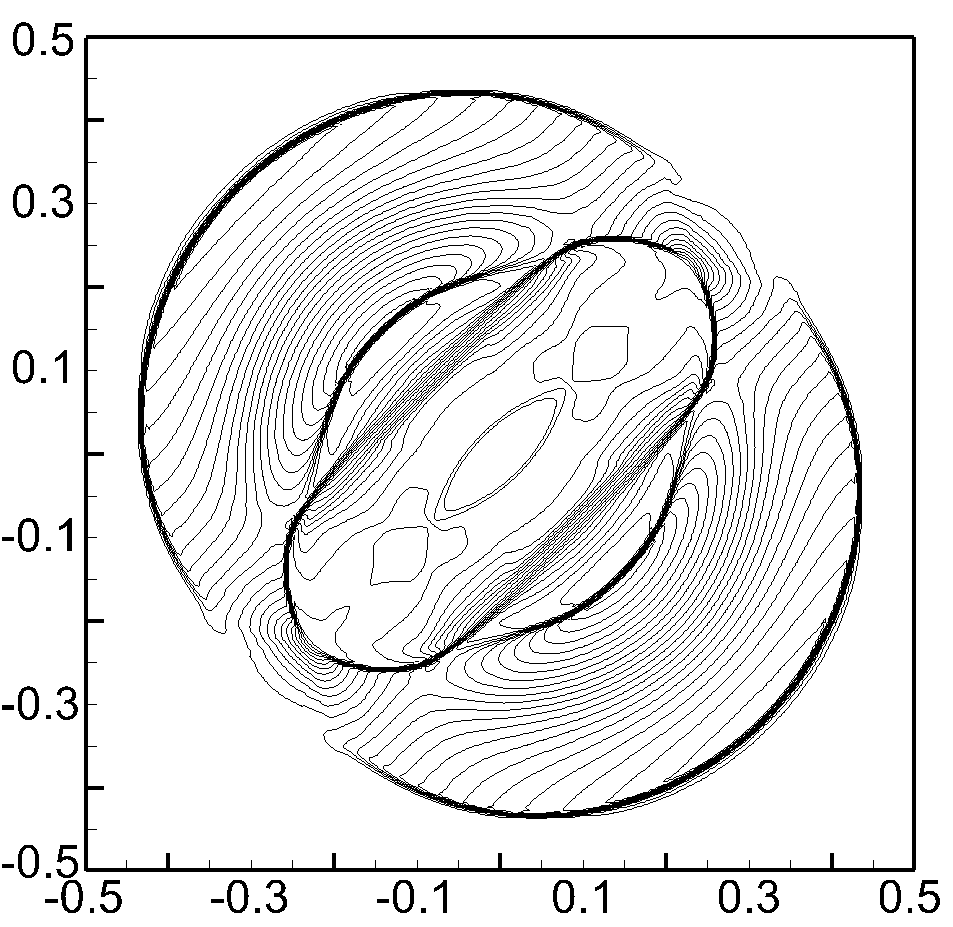}
    }
	\caption{\small Example \ref{Exp:Blast}: Contour plots of $\rho$ (left), $p$ (center),
and $p_m$ (right) for the three configurations (from top to bottom) at $t=0.01$, $t=0.001$, and $t=0.01$, respectively.}
	\label{Fig:Blast problem}
\end{figure}

\begin{Exp}[Shock-Cloud Interaction]\label{Exp:ShockCloud}
\rm
This test examines the robustness of our PosDiv-CDG scheme under the interaction of a strong shock with a dense cloud, following the setup in~\cite{toth2000b,balbas2006nonoscillatory}. The initial conditions are
\begin{align*}
\big(\rho,~\bm u,~\bm B,~p\big) =
\begin{cases}
    \big(3.86859,~0,~0,~0,~0,~2.1826182,~-2.1826182,~167.345\big), & x<0.6, \\
    \big(1,~-11.2536,~0,~0,~0,~0.56418958,~0.56418958,~1\big), & x>0.6.
\end{cases}
\end{align*}
The computational domain $[0,1] \times [0,1]$ is discretized using $300 \times 300$ uniform cells. An inflow condition is imposed at the right boundary, while outflow conditions are applied elsewhere. A high-density circular cloud with $\rho = 10$ is placed in the post-shock region, centered at $(0.8,\, 0.5)$ with radius $0.15$. The simulation runs until $t = 0.06$.
\Cref{Fig:ShockCloud problem} shows the contour plots of the density $\rho$, thermal pressure $p$, and magnetic pressure $p_m$ obtained with the third-order PosDiv-CDG scheme. This test highlights the importance of carefully handling boundary conditions to maintain the global DF property. Improper treatment can introduce divergence errors that propagate and accumulate, potentially leading to instability or failure of the simulation. The proposed PosDiv-CDG method demonstrates strong robustness in this test.
\end{Exp}

\begin{figure}[!thb]
    \vspace*{-0.2cm}
	\centering  
	{
		\includegraphics[width=0.31\linewidth]{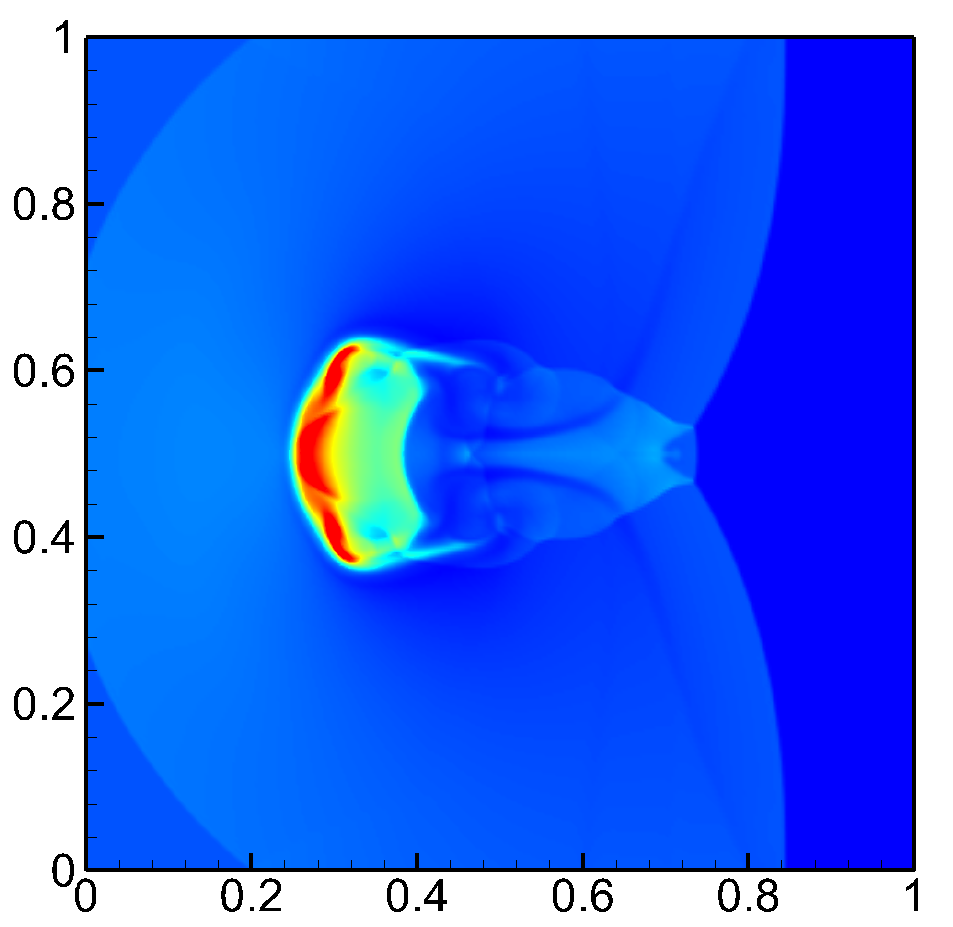}
    }
	{
		\includegraphics[width=0.31\linewidth]{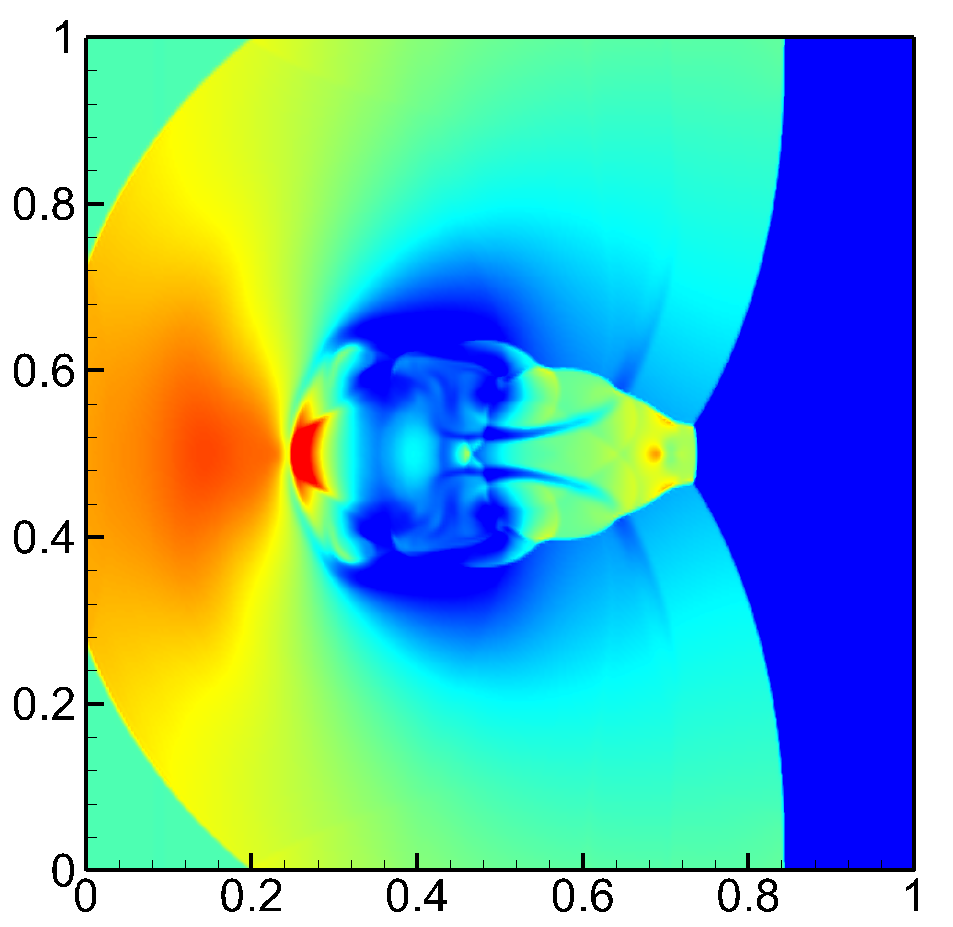}
    }
	{
		\includegraphics[width=0.31\linewidth]{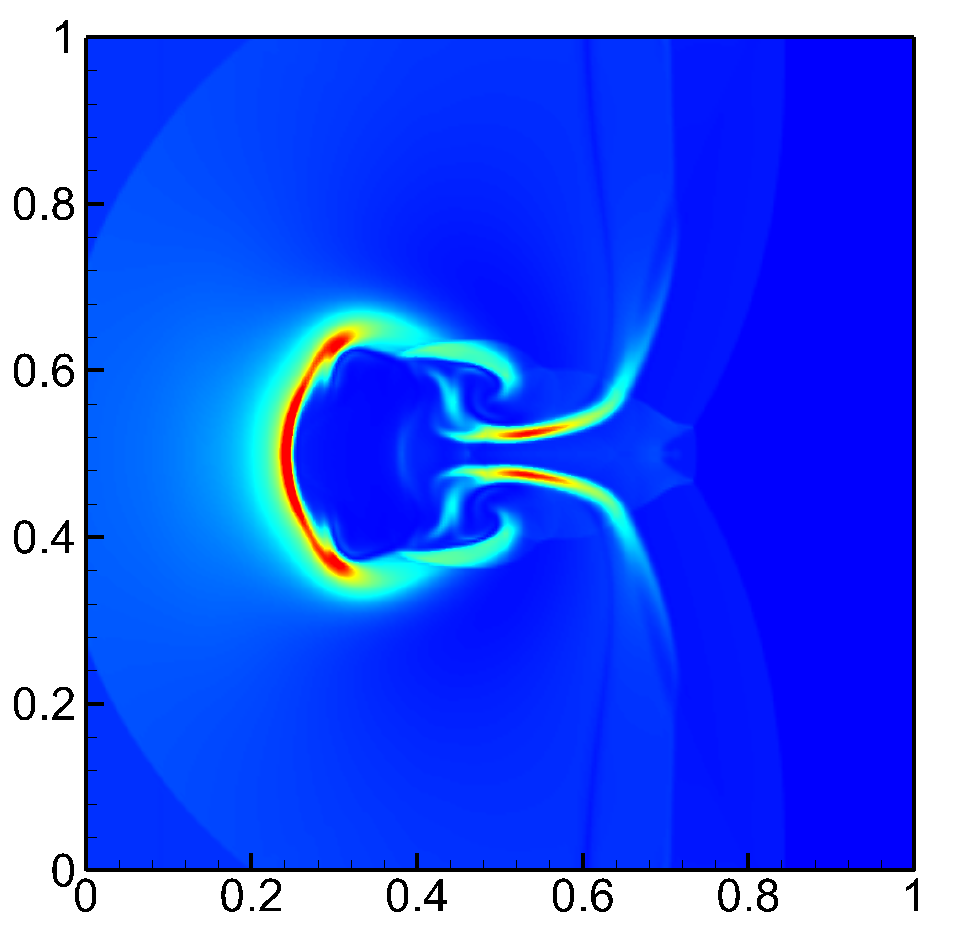}
    }
	\caption{\small Example \ref{Exp:ShockCloud}: Contour plots of density $\rho$ (left), pressure $p$ (center), and magnetic pressure $p_m$ (right) at $t=0.06$.}
	\label{Fig:ShockCloud problem}
\end{figure}

\begin{figure}[!thb] 
	\vspace*{-0.2cm}
	\centering  
	{
		\includegraphics[width=0.31\linewidth]{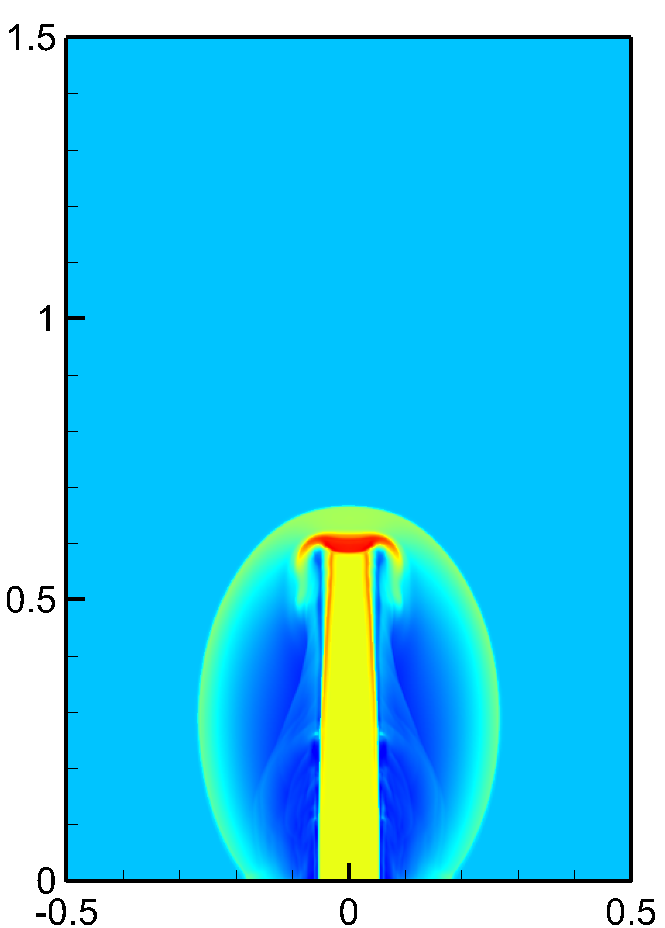}
	}
	{
		\includegraphics[width=0.31\linewidth]{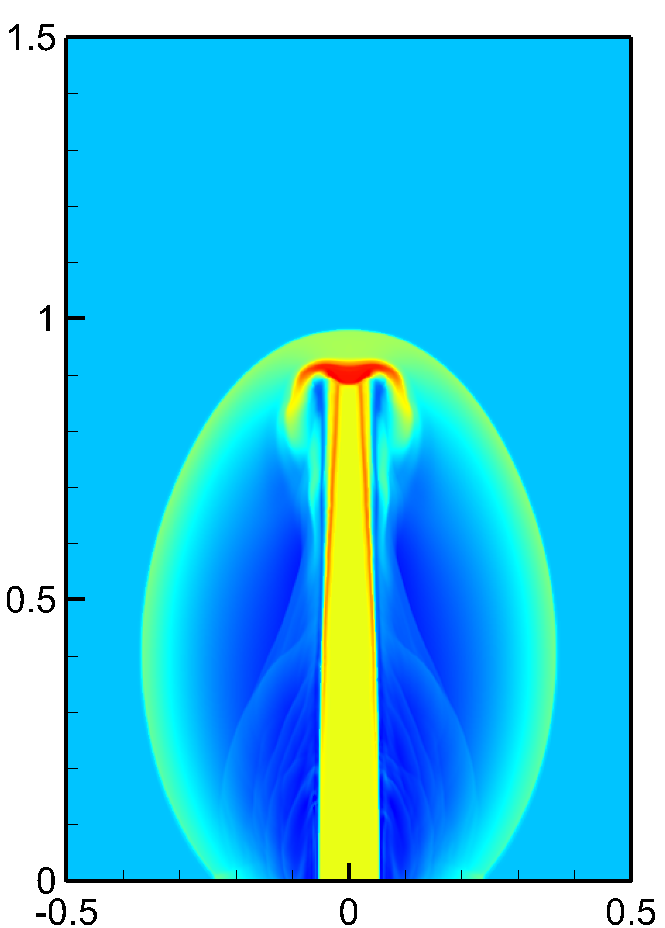}
	}
	{
		\label{fig:Jet1_rho_t3}
		\includegraphics[width=0.31\linewidth]{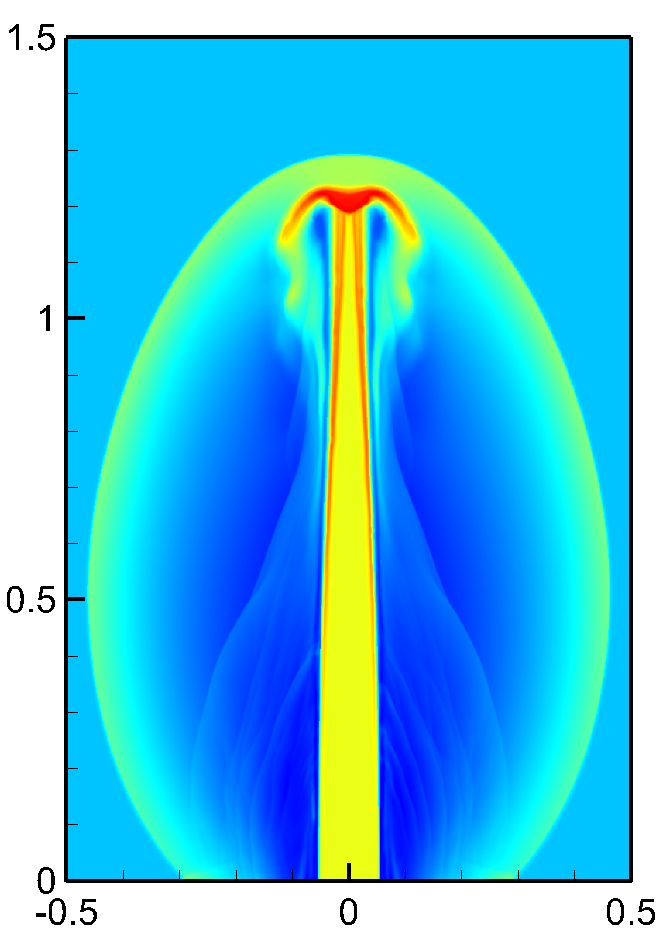}
	}
	\caption{\small Example \ref{Exp:Jets}: Density logarithm for the Mach 800
		jet problem with $B_0 = \sqrt{200}$ at $t = 0.001$, 0.0015 and 0.002 (from left to right).}
	\label{Fig:Jet1}
\end{figure}

\begin{Exp}[Astrophysical Jets]\label{Exp:Jets}
\rm
This example tests several MHD jet problems to further assess the robustness and PP property of the proposed PosDiv-CDG method. These simulations are particularly challenging due to the presence of extremely high Mach numbers and strong magnetic fields, which can easily result in negative pressure if the PP technique is not employed.
We begin with the Mach 800 jet problem \cite{WuSINUM2018,wu2018provably}, using the initial conditions
\begin{align*}
    \big(\rho,~\bm u,~\bm B,~p\big) = \big(0.1\gamma,~0,~0,~0,~0,~B_0,~0,~1\big),  \qquad \gamma = 1.4.
\end{align*}
Outflow boundary conditions are applied on all sides, except for an inflow segment along the bottom boundary where $\abs{x} \leq 0.05$:
\begin{align*}
    \big(\rho,~\bm u,~\bm B,~p\big)_{y=0,~\abs{x}\leq0.05} = \big(\gamma,~0,~800,~0,~0,~B_0,~0,~1\big).
\end{align*}
Improper treatment of boundary conditions can introduce non-zero divergence errors, potentially compromising the globally DF property.
As the magnetic field strength $B_0$ increases, the problem becomes more challenging due to increased magnetic pressure and reduced plasma-beta. We test three magnetized cases with $\beta \in \{ {10^{-2}, 10^{-3}, 10^{-4}} \}$, corresponding to $B_0 = \sqrt{200}, \sqrt{2000}, \sqrt{20,000}$, respectively.
The domain $[-0.5, 0.5]\times [0, 1.5]$ is discretized using $400 \times 600$ uniform cells. \Cref{Fig:Jet1,Fig:Jet2,Fig:Jet3} present the numerical results at $t=0.001$, $t=0.0015$, and $t=0.002$, respectively. The solution structures differ significantly with increasing $B_0$, illustrating our method's robustness and its ability to resolve strongly magnetized MHD flows. In addition, the PosDiv-CDG method maintains the symmetry of the jet profile.

To further highlight the method's capability, we simulate even more extreme jets with $B_0 = \sqrt{20,000}$ and Mach numbers of 2000 and 1,000,000. Schlieren images of the logarithmic density are shown in \Cref{Fig:Jet4,Fig:Jet7} at $t = 0.00075$ and $t = 0.0000015$, respectively. As expected, higher Mach numbers yield narrower jets, consistent with observations in \cite{wu2019provably}. To our knowledge, the {\bf Mach 1,000,000 jet} with $B_0 = \sqrt{20,000}$ represents the first successful simulation of such an extreme MHD case, which was not
addressed by any existing globally DF CDG or DG methods \cite{li2011central, li2012arbitrary, fu2018globally}.
\end{Exp}

\begin{figure}[!thb] 
    \vspace*{-0.2cm}
	\centering  
    {
		\includegraphics[width=0.31\linewidth]{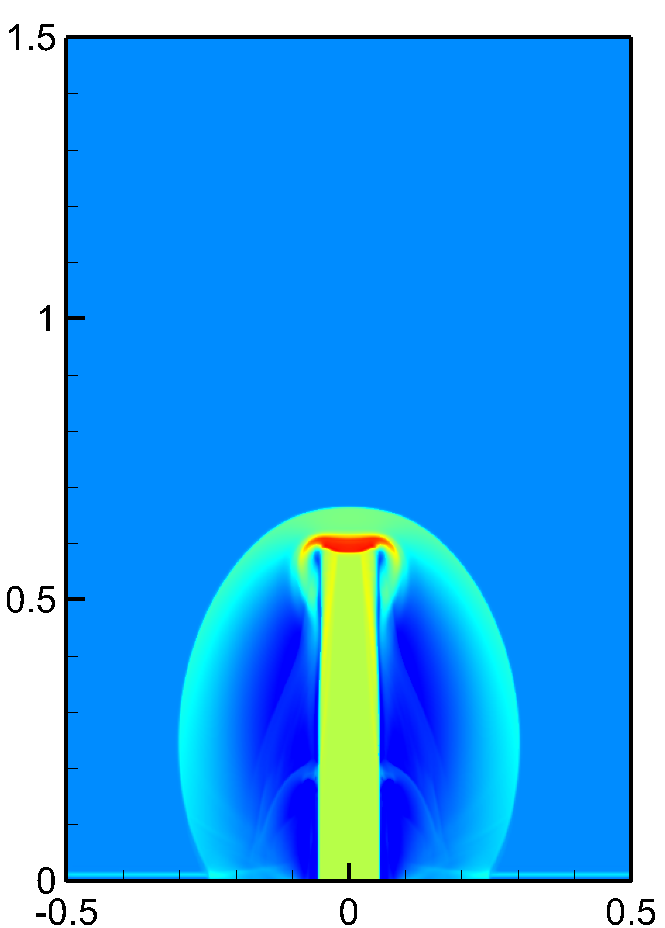}
    }
    {
		\includegraphics[width=0.31\linewidth]{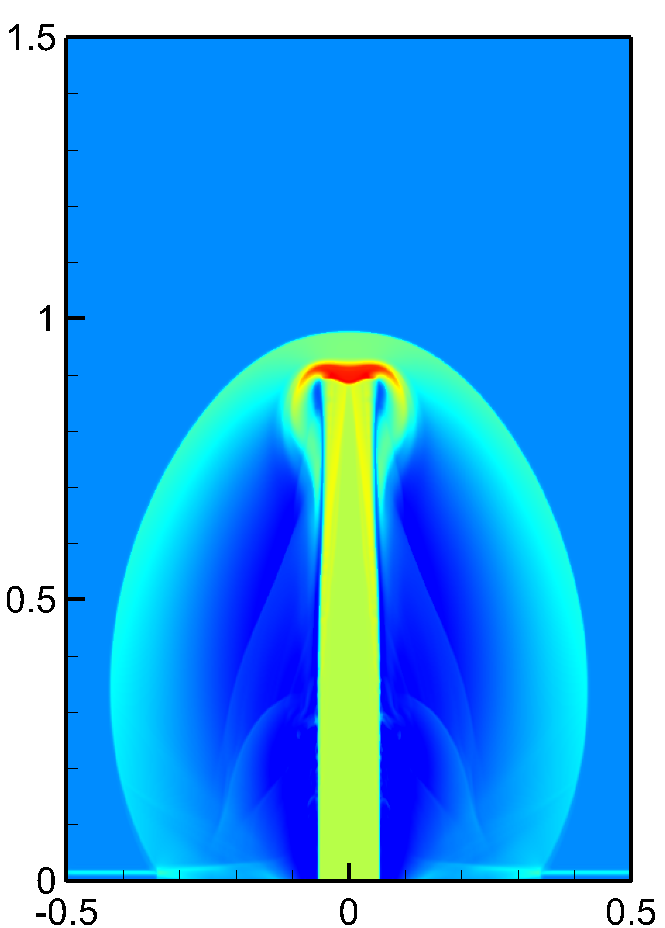}
    }
    {
		\includegraphics[width=0.31\linewidth]{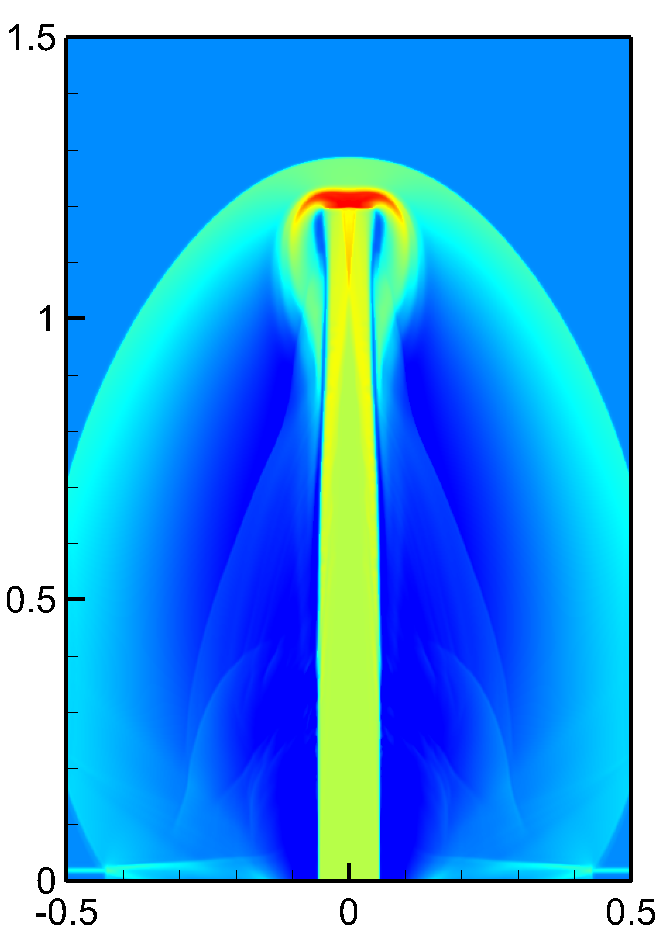}
    }

    {
    	\includegraphics[width=0.31\linewidth]{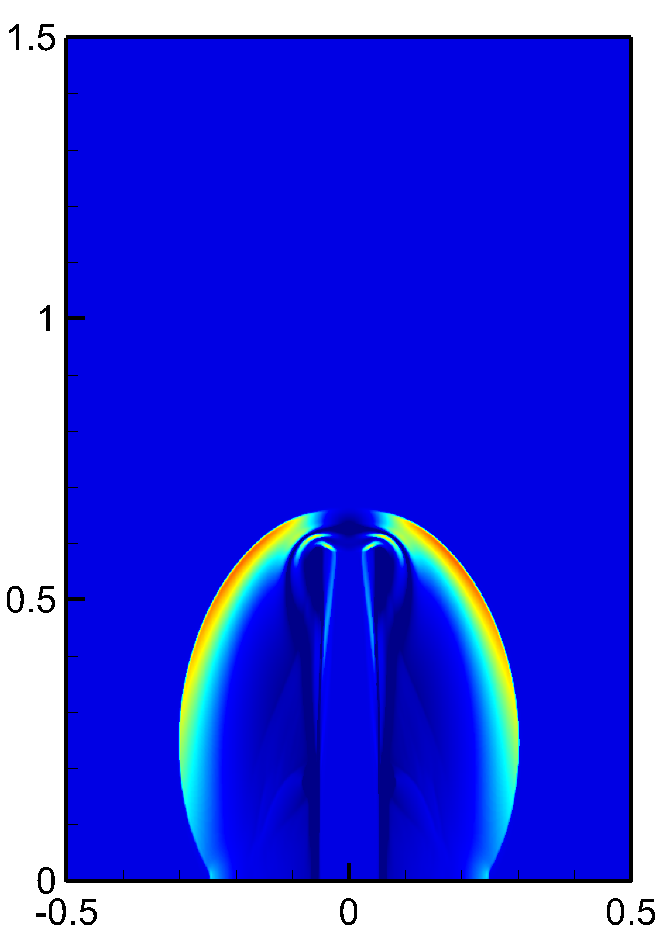}
    }
    {
    	\includegraphics[width=0.31\linewidth]{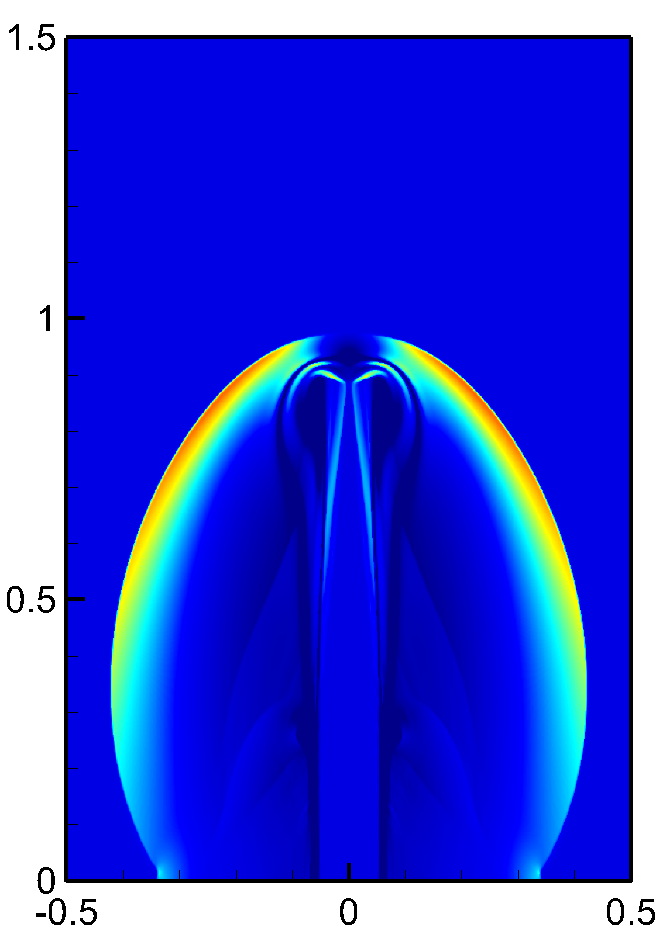}
    }
    {
    	\includegraphics[width=0.31\linewidth]{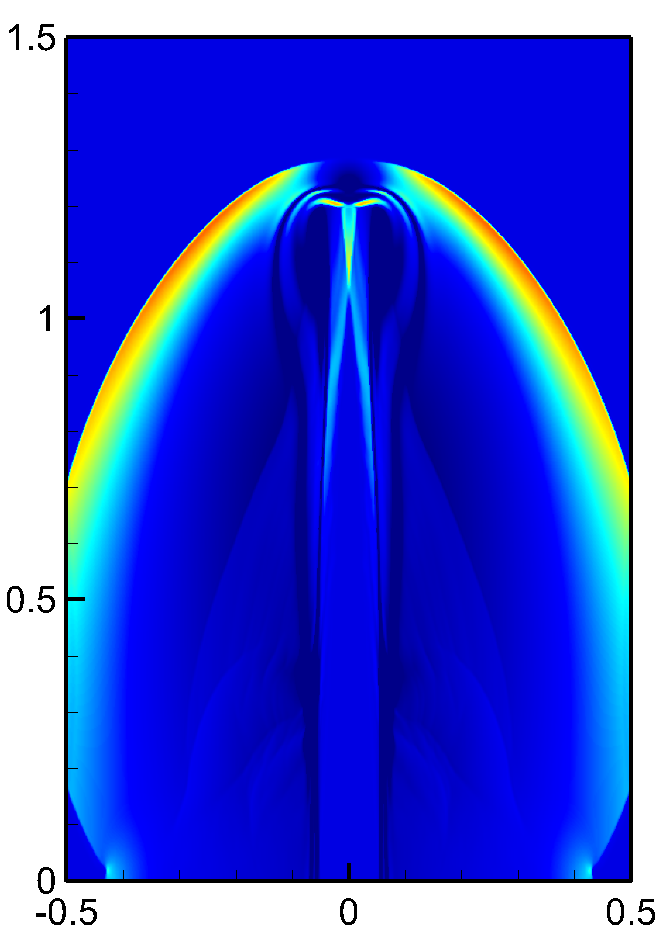}
    }
	\caption{\small Example \ref{Exp:Jets}: Density logarithm (top) and magnetic pressure  (bottom) for the Mach 800 jet with $B_0 = \sqrt{2000}$ at $t = 0.001$, 0.0015, and 0.002 (from left to right).}
	\label{Fig:Jet2}
\end{figure}

\begin{figure}[!thb]
    \vspace*{-0.2cm}
	\centering 
    {
		\includegraphics[width=0.31\linewidth]{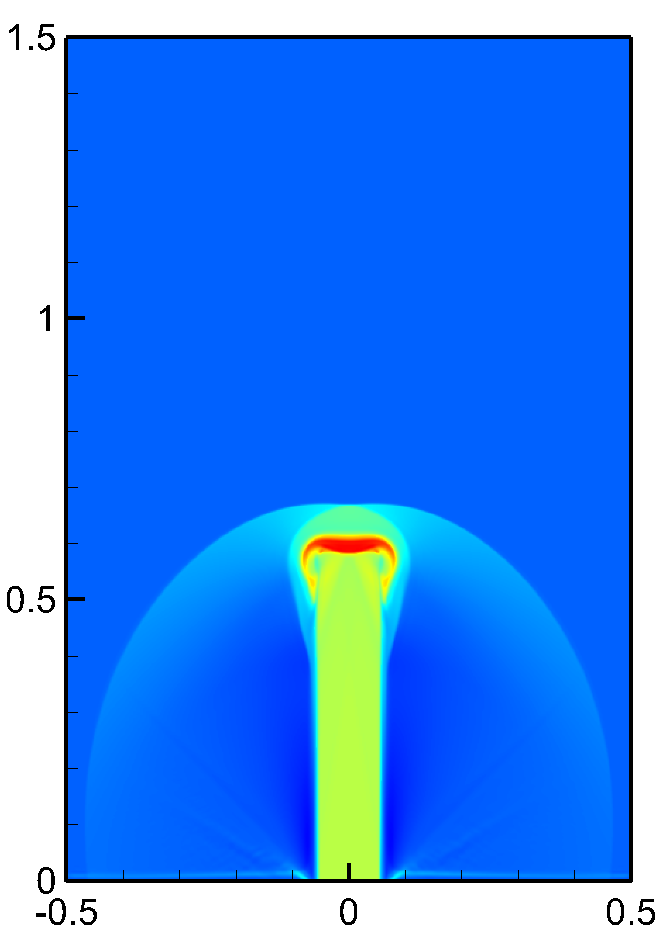}
    }
    {
		\includegraphics[width=0.31\linewidth]{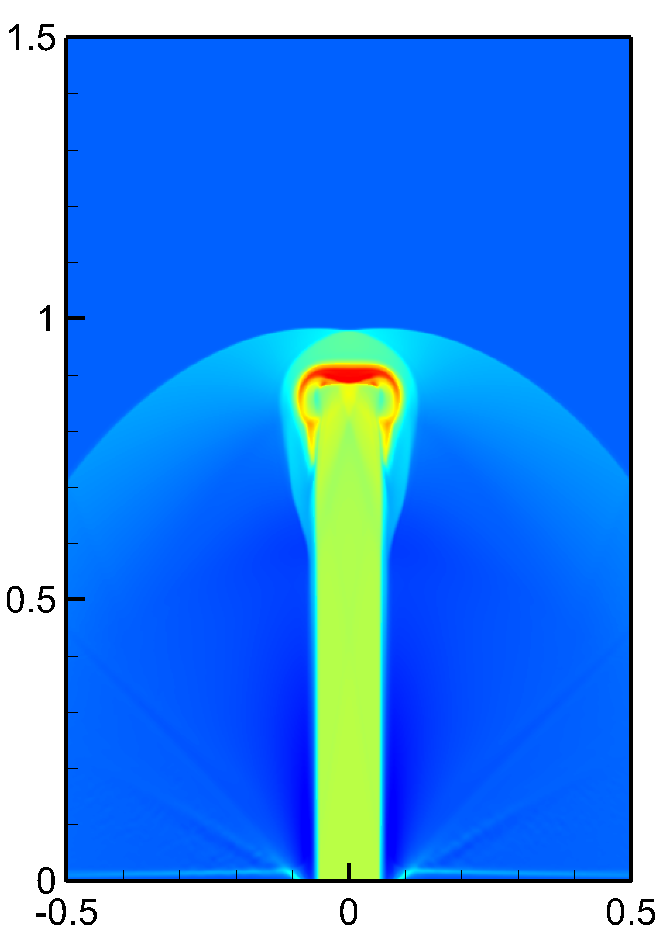}
    }
    {
		\includegraphics[width=0.31\linewidth]{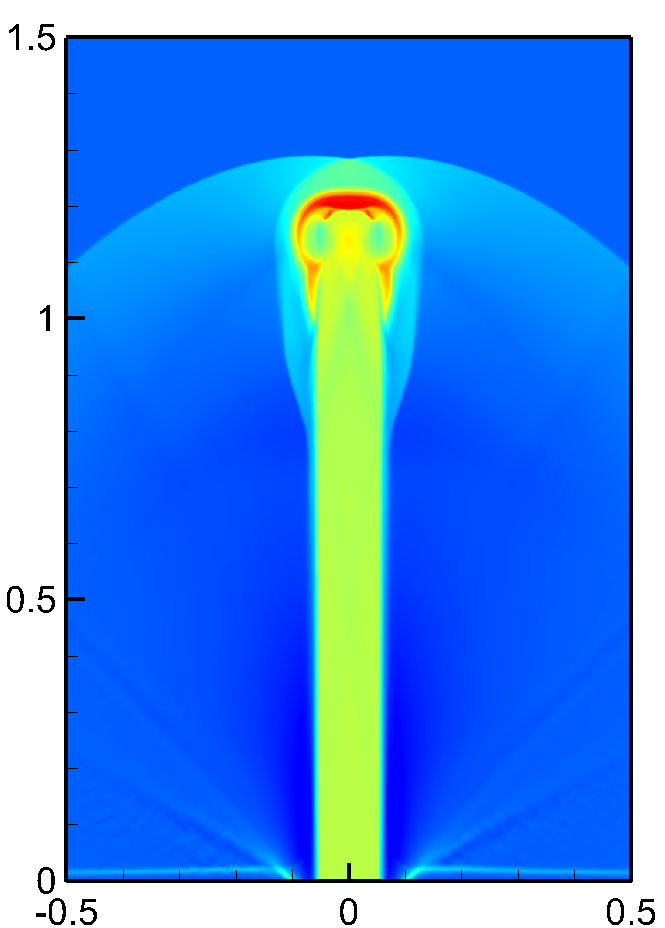}
    }

    {
		\includegraphics[width=0.31\linewidth]{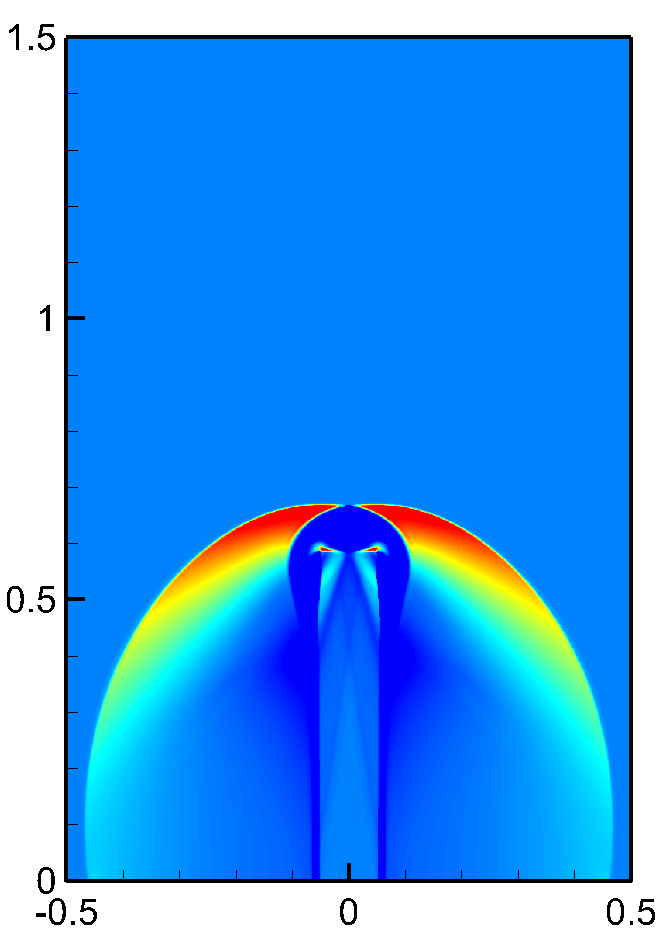}
    }
    {
		\includegraphics[width=0.31\linewidth]{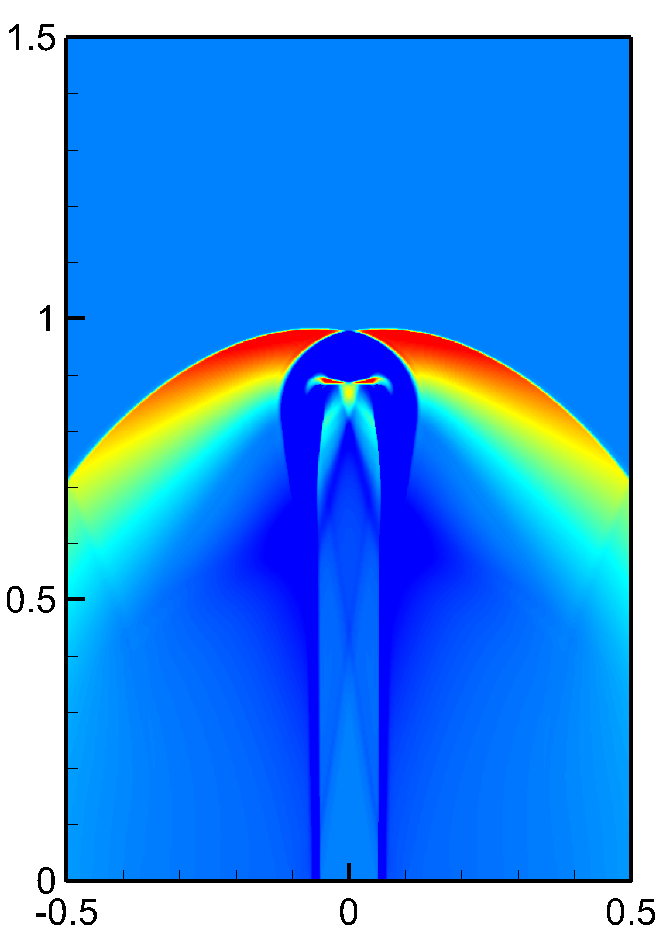}
    }
    {
		\includegraphics[width=0.31\linewidth]{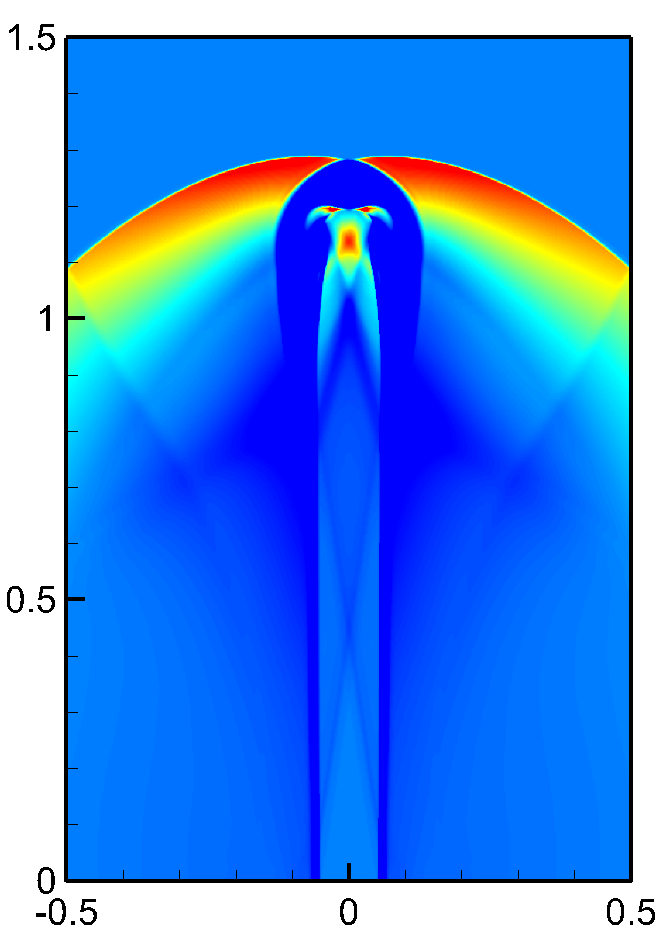}
    }
	\caption{\small Example \ref{Exp:Jets}: Density logarithm (top) and magnetic pressure (bottom) for the Mach 800 jet with $B_0 = \sqrt{20,000}$ at $t = 0.001$, 0.0015, and 0.002 (from left to right).}
	\label{Fig:Jet3}
\end{figure}

\begin{figure}[!thb] 
    \vspace*{-0.2cm}
	\centering 
    {
		\includegraphics[width=0.31\linewidth]{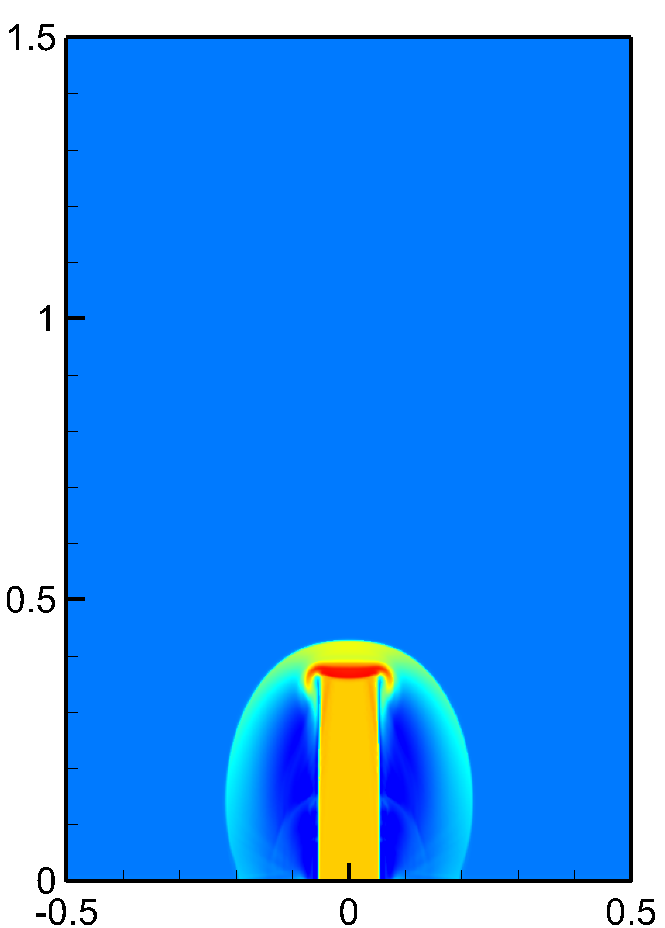}
    }
    {
		\includegraphics[width=0.31\linewidth]{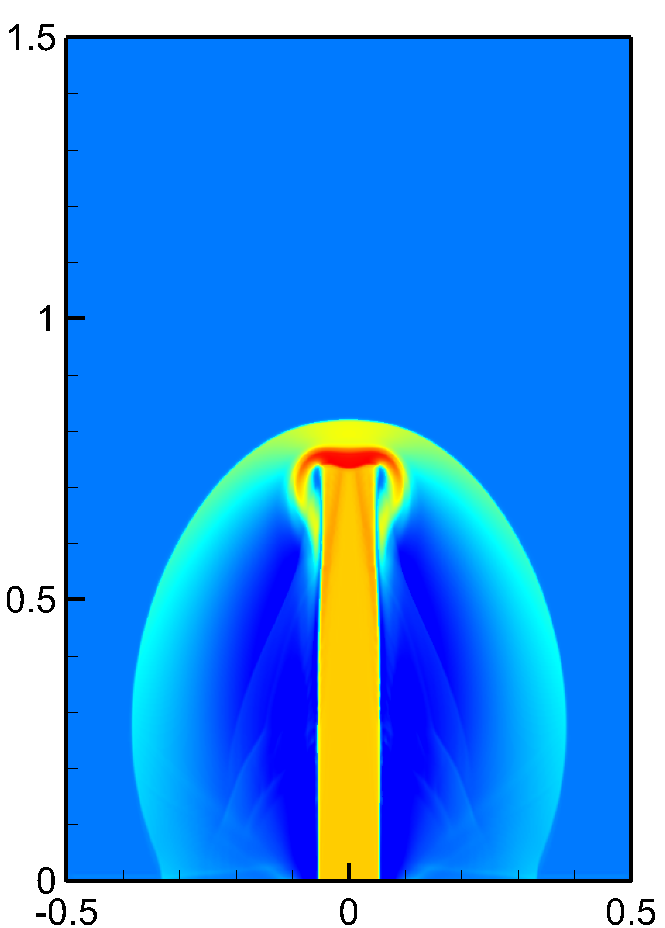}
    }
    {
		\includegraphics[width=0.31\linewidth]{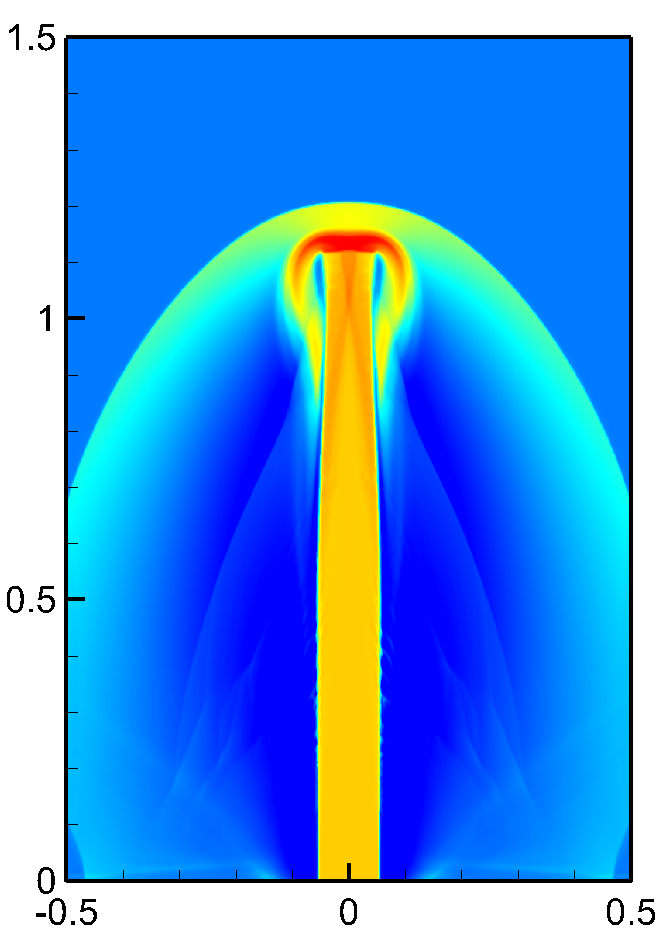}
    }
	\caption{\small Example \ref{Exp:Jets}: Density logarithm for the Mach
2000 jet with $B_0 = \sqrt{20,000}$ at $t = 0.00025$, 0.0005, and 0.00075 (from left to right).}
	\label{Fig:Jet4}
\end{figure}

\begin{figure}[!thb] 
    \vspace*{-0.2cm}
	\centering  
    {
		\includegraphics[width=0.31\linewidth]{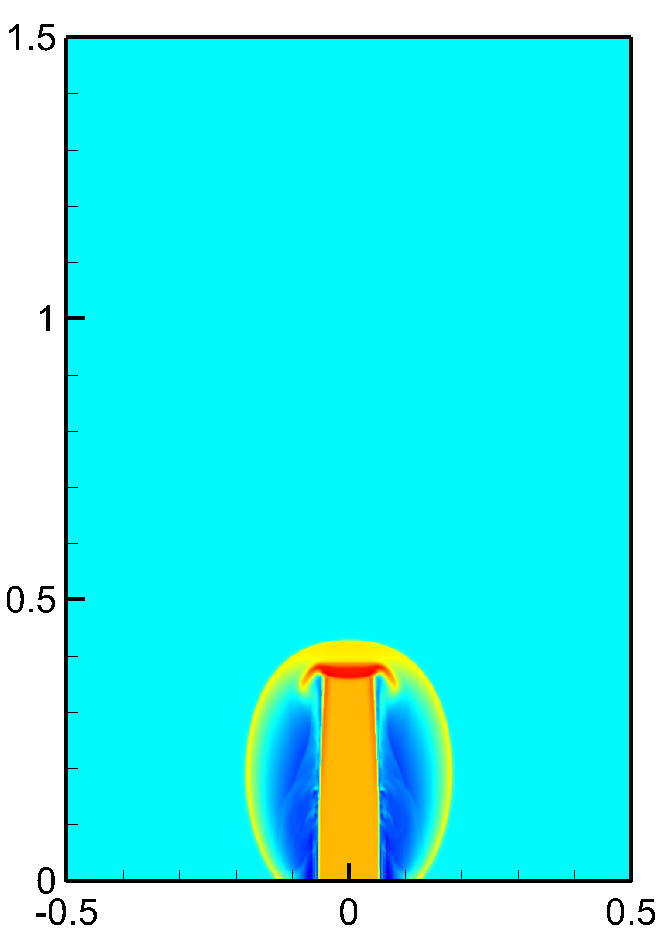}
    }
    {
		\includegraphics[width=0.31\linewidth]{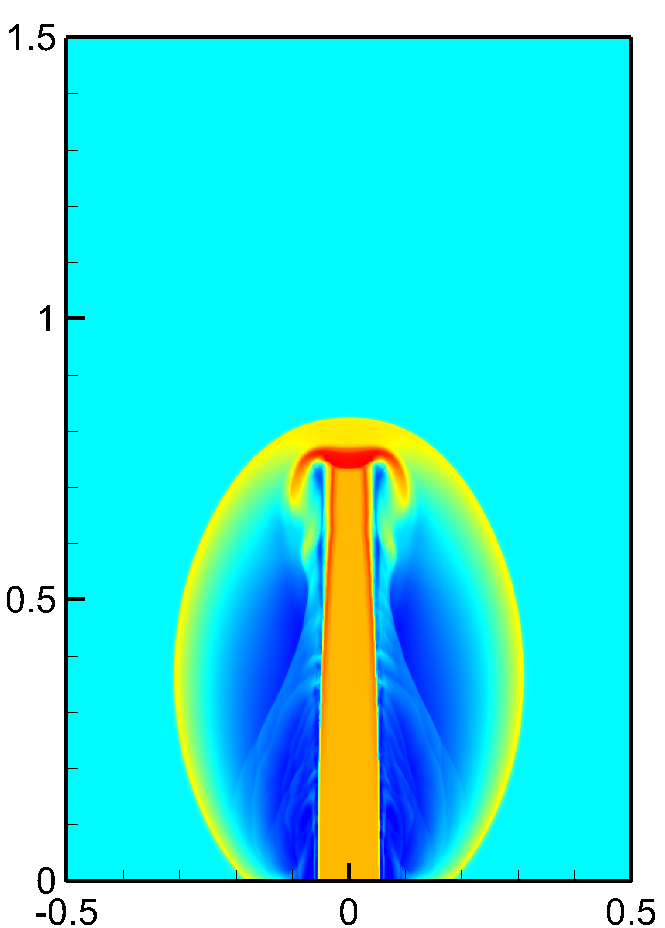}
    }
    {
		\includegraphics[width=0.31\linewidth]{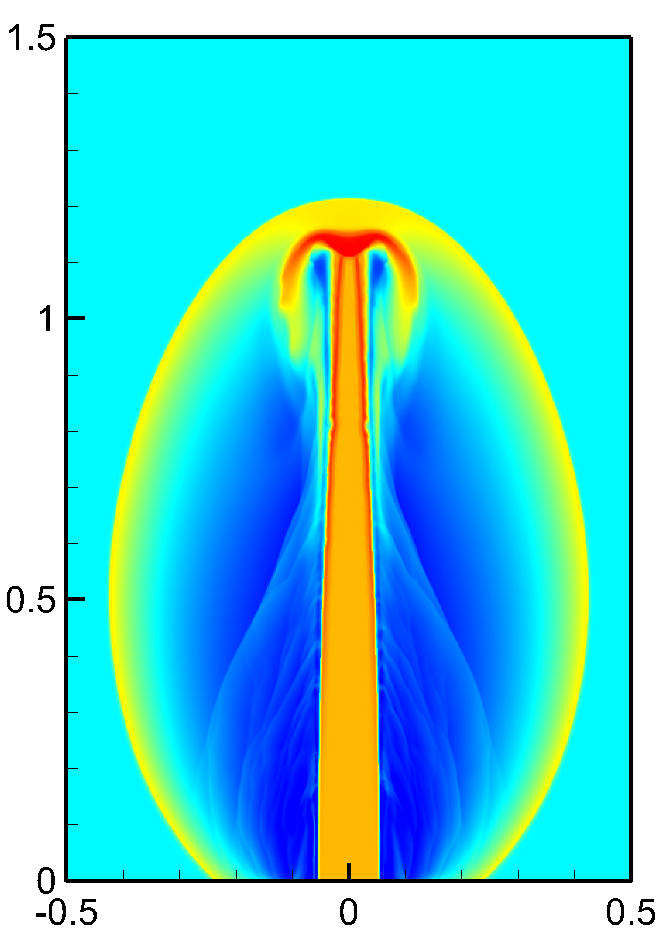}
    }
	\caption{\small Example \ref{Exp:Jets}: Density logarithm for the {\bf Mach
1,000,000 jet} with $B_0 = \sqrt{20,000}$ at $t = 0.0000005$, 0.000001, and 0.0000015 (from left to right).}
	\label{Fig:Jet7}
\end{figure}

\section{Conclusions}\label{sec:con}

This paper has presented a key advancement in the numerical simulation of compressible ideal MHD by developing the PosDiv-CDG method, which provably preserves both the positivity of density and pressure and the global divergence-free (DF) condition at arbitrarily high order in multiple dimensions. This method resolves the intrinsic incompatibility between standard positivity-preserving (PP) limiters and global DF enforcement through a combination of innovations: a new limiting strategy, a dissipation mechanism guided by convex decomposition, and an auxiliary magnetic field evolution. A theoretical  proof has been provided for preserving the positivity of updated cell averages, established within the geometric quasi-linearization (GQL) framework. In addition, a compact, non-intrusive, entropy-induced convex-oscillation-suppressing (COS) method is developed to effectively eliminate spurious oscillations near shocks, while preserving high-order accuracy and the global DF condition. Several numerical experiments, including extremely challenging MHD jet problems with low plasma-beta and Mach numbers up to $10^6$, have confirmed the robustness, accuracy, and structural fidelity of the proposed method. These results establish PosDiv-CDG as a new approach for high-fidelity, reliable simulation of complex, shock-dominated, and highly magnetized MHD flows.

\newpage 

\bibliographystyle{siamplain}
\bibliography{references_article}

\begin{thebibliography}{10}

\bibitem{arminjon2005central}
{\sc P.~Arminjon and R.~Touma}, {\em Central finite volume methods with
  constrained transport divergence treatment for ideal {MHD}}, J. Comput.
  Phys., 204 (2005), pp.~737--759.

\bibitem{balbas2006nonoscillatory}
{\sc J.~Balb{\'a}s and E.~Tadmor}, {\em Nonoscillatory central schemes for
  one-and two-dimensional magnetohydrodynamics equations. {II}: {High}-order
  semidiscrete schemes}, SIAM J. Sci. Comput., 28 (2006), pp.~533--560.

\bibitem{balsara2001divergence}
{\sc D.~S. Balsara}, {\em Divergence-free adaptive mesh refinement for
  magnetohydrodynamics}, J. Comput. Phys., 174 (2001), pp.~614--648.

\bibitem{balsara2004second}
{\sc D.~S. Balsara}, {\em Second-order-accurate schemes for
  magnetohydrodynamics with divergence-free reconstruction}, Astrophys. J.
  Suppl. Ser., 151 (2004), p.~149.

\bibitem{balsara2009divergence}
{\sc D.~S. Balsara}, {\em Divergence-free reconstruction of magnetic fields and
  {WENO} schemes for magnetohydrodynamics}, J. Comput. Phys., 228 (2009),
  pp.~5040--5056.

\bibitem{balsara2012self}
{\sc D.~S. Balsara}, {\em Self-adjusting, positivity preserving high order
  schemes for hydrodynamics and magnetohydrodynamics}, J. Comput. Phys., 231
  (2012), pp.~7504--7517.

\bibitem{balsara2021globally}
{\sc D.~S. Balsara, R.~Kumar, and P.~Chandrashekar}, {\em Globally
  divergence-free {DG} scheme for ideal compressible {MHD}}, Commun. Appl.
  Math. Comput. Sci., 16 (2021), pp.~59--98.

\bibitem{balsara1999staggered}
{\sc D.~S. Balsara and D.~S. Spicer}, {\em A staggered mesh algorithm using
  high order {Godunov} fluxes to ensure solenoidal magnetic fields in
  magnetohydrodynamic simulations}, J. Comput. Phys., 149 (1999), pp.~270--292.

\bibitem{bouchut2007multiwave}
{\sc F.~Bouchut, C.~Klingenberg, and K.~Waagan}, {\em A multiwave approximate
  {Riemann} solver for ideal {MHD} based on relaxation. {I}: theoretical
  framework}, Numer. Math., 108 (2007), pp.~7--42.

\bibitem{bouchut2010multiwave}
{\sc F.~Bouchut, C.~Klingenberg, and K.~Waagan}, {\em A multiwave approximate
  {Riemann} solver for ideal {MHD} based on relaxation {II}: numerical
  implementation with 3 and 5 waves}, Numer. Math., 115 (2010), pp.~647--679.

\bibitem{brackbill1980effect}
{\sc J.~U. Brackbill and D.~C. Barnes}, {\em The effect of nonzero $\nabla\cdot
  \mathbf{B}$ on the numerical solution of the magnetohydrodynamic equations},
  J. Comput. Phys., 35 (1980), pp.~426--430.

\bibitem{cao2025COSDG}
{\sc H.~Cao, Y.~Huang, Z.~Li, and K.~Wu}, {\em {COS(DG)}: Convex
  oscillation-suppressing discontinuous {Galerkin} method with provable
  high-order accuracy}, submitted (under revision),  (2025).

\bibitem{cheng2013positivity}
{\sc Y.~Cheng, F.~Li, J.~Qiu, and L.~Xu}, {\em Positivity-preserving {DG} and
  central {DG} methods for ideal {MHD} equations}, J. Comput. Phys., 238
  (2013), pp.~255--280.

\bibitem{christlieb2016high}
{\sc A.~J. Christlieb, X.~Feng, D.~C. Seal, and Q.~Tang}, {\em A high-order
  positivity-preserving single-stage single-step method for the ideal
  magnetohydrodynamic equations}, J. Comput. Phys., 316 (2016), pp.~218--242.

\bibitem{christlieb2015positivity}
{\sc A.~J. Christlieb, Y.~Liu, Q.~Tang, and Z.~Xu}, {\em Positivity-preserving
  finite difference weighted {ENO} schemes with constrained transport for ideal
  magnetohydrodynamic equations}, SIAM J. Sci. Comput., 37 (2015),
  pp.~A1825--A1845.

\bibitem{cockburn1989tvb2}
{\sc B.~Cockburn and C.-W. Shu}, {\em {TVB Runge--Kutta} local projection
  discontinuous {Galerkin} finite element method for conservation laws. {II}.
  {General} framework}, Math. Comp., 52 (1989), pp.~411--435.

\bibitem{cockburn1998runge}
{\sc B.~Cockburn and C.-W. Shu}, {\em The {Runge--Kutta} discontinuous
  {Galerkin} method for conservation laws {V}: multidimensional systems}, J.
  Comput. Phys., 141 (1998), pp.~199--224.

\bibitem{cui2023111882}
{\sc S.~Cui, S.~Ding, and K.~Wu}, {\em Is the classic convex decomposition
  optimal for bound-preserving schemes in multiple dimensions?}, J. Comput.
  Phys., 476 (2023), p.~111882.

\bibitem{cui2024optimal}
{\sc S.~Cui, S.~Ding, and K.~Wu}, {\em On optimal cell average decomposition
  for high-order bound-preserving schemes of hyperbolic conservation laws},
  SIAM J. Numer. Anal., 62 (2024), pp.~775--810.

\bibitem{dedner2002hyperbolic}
{\sc A.~Dedner, F.~Kemm, D.~Kr{\"o}ner, C.-D. Munz, T.~Schnitzer, and
  M.~Wesenberg}, {\em Hyperbolic divergence cleaning for the {MHD} equations},
  J. Comput. Phys., 175 (2002), pp.~645--673.

\bibitem{ding2024gql}
{\sc S.~Ding and K.~Wu}, {\em {GQL}-based bound-preserving and locally
  divergence-free central discontinuous {Galerkin} schemes for relativistic
  magnetohydrodynamics}, J. Comput. Phys., 514 (2024), p.~113208.

\bibitem{ding2023new}
{\sc S.~Ding and K.~Wu}, {\em A new discretely divergence-free
  positivity-preserving high-order finite volume method for ideal {MHD}
  equations}, SIAM J. Sci. Comput., 46 (2024), pp.~A50--A79.

\bibitem{evans1988simulation}
{\sc C.~R. Evans and J.~F. Hawley}, {\em Simulation of magnetohydrodynamic
  flows-{A} constrained transport method}, Astrophys. J., Part 1 (ISSN
  0004-637X), vol. 332, Sept. 15, 1988, p. 659-677., 332 (1988), pp.~659--677.

\bibitem{fu2018globally}
{\sc P.~Fu, F.~Li, and Y.~Xu}, {\em Globally divergence-free discontinuous
  {Galerkin} methods for ideal magnetohydrodynamic equations}, J. Sci. Comput.,
  77 (2018), pp.~1621--1659.

\bibitem{gottlieb2001strong}
{\sc S.~Gottlieb, C.-W. Shu, and E.~Tadmor}, {\em Strong stability-preserving
  high-order time discretization methods}, SIAM Rev., 43 (2001), pp.~89--112.

\bibitem{janhunen2000positive}
{\sc P.~Janhunen}, {\em A positive conservative method for magnetohydrodynamics
  based on {HLL} and {Roe} methods}, J. Comput. Phys., 160 (2000),
  pp.~649--661.

\bibitem{jiang2022positivity}
{\sc H.~Jiang, H.~Tang, and K.~Wu}, {\em Positivity-preserving well-balanced
  central discontinuous {Galerkin} schemes for the {Euler} equations under
  gravitational fields}, J. Comput. Phys., 463 (2022), p.~111297.

\bibitem{klingenberg2010relaxation}
{\sc C.~Klingenberg and K.~Waagan}, {\em Relaxation solvers for ideal {MHD}
  equations-a review}, Acta Math. Sci., 30 (2010), pp.~621--632.

\bibitem{li2005locally}
{\sc F.~Li and C.-W. Shu}, {\em Locally divergence-free discontinuous
  {Galerkin} methods for {MHD} equations}, J. Sci. Comput., 22 (2005),
  pp.~413--442.

\bibitem{li2012arbitrary}
{\sc F.~Li and L.~Xu}, {\em Arbitrary order exactly divergence-free central
  discontinuous {Galerkin} methods for ideal {MHD} equations}, J. Comput.
  Phys., 231 (2012), pp.~2655--2675.

\bibitem{li2011central}
{\sc F.~Li, L.~Xu, and S.~Yakovlev}, {\em Central discontinuous {Galerkin}
  methods for ideal {MHD} equations with the exactly divergence-free magnetic
  field}, J. Comput. Phys., 230 (2011), pp.~4828--4847.

\bibitem{li2017positivity}
{\sc M.~Li, P.~Guyenne, F.~Li, and L.~Xu}, {\em A positivity-preserving
  well-balanced central discontinuous {Galerkin} method for the nonlinear
  shallow water equations}, J. Sci. Comput., 71 (2017), pp.~994--1034.

\bibitem{li2016maximum}
{\sc M.~Li, F.~Li, Z.~Li, and L.~Xu}, {\em Maximum-principle-satisfying and
  positivity-preserving high order central discontinuous {Galerkin} methods for
  hyperbolic conservation laws}, SIAM J. Sci. Comput., 38 (2016),
  pp.~A3720--A3740.

\bibitem{li2008high}
{\sc S.~Li}, {\em High order central scheme on overlapping cells for
  magneto-hydrodynamic flows with and without constrained transport method}, J.
  Comput. Phys., 227 (2008), pp.~7368--7393.

\bibitem{liu2025structure}
{\sc M.~Liu and K.~Wu}, {\em Structure-preserving oscillation-eliminating
  discontinuous {G}alerkin schemes for ideal {MHD} equations: Locally
  divergence-free and positivity-preserving}, J. Comput. Phys.,  (2025),
  p.~113795.

\bibitem{liu2025globally}
{\sc Y.~Liu, W.~Guo, Y.~Jiang, and M.~Zhang}, {\em A globally divergence-free
  entropy stable nodal {DG} method for conservative ideal {MHD} equations},
  arXiv preprint arXiv:2501.06815,  (2025).

\bibitem{liu2007central}
{\sc Y.~Liu, C.-W. Shu, E.~Tadmor, and M.~Zhang}, {\em Central discontinuous
  {G}alerkin methods on overlapping cells with a nonoscillatory hierarchical
  reconstruction}, SIAM J. Numer. Anal., 45 (2007), pp.~2442--2467.

\bibitem{liu20082}
{\sc Y.~Liu, C.-W. Shu, E.~Tadmor, and M.~Zhang}, {\em ${L}^2$ stability
  analysis of the central discontinuous {Galerkin} method and a comparison
  between the central and regular discontinuous {Galerkin} methods},
  ESAIM-Math. Model. Num., 42 (2008), pp.~593--607.

\bibitem{PSW}
{\sc M.~Peng, Z.~Sun, and K.~Wu}, {\em {OEDG: Oscillation-eliminating
  discontinuous {G}alerkin method for hyperbolic conservation laws}}, Math.
  Comp., 94 (2025), pp.~1147--1198.

\bibitem{powell1994approximate}
{\sc K.~Powell}, {\em An approximate {Riemann} solver for magnetohydrodynamics
  (that works in more than one dimension)}, Technical Report ICASE Report
  194902, 94-24, ICASE, NASA Langley (1994),  (1994).

\bibitem{powell1995upwind}
{\sc K.~Powell, P.~Roe, R.~Myong, and T.~Gombosi}, {\em An upwind scheme for
  magnetohydrodynamics}, In: 12th Computational Fluid Dynamics Conference,
  (1995), p.~p. 1704.

\bibitem{reyna2015operator}
{\sc M.~A. Reyna and F.~Li}, {\em Operator bounds and time step conditions for
  the {DG} and central {DG} methods}, J. Sci. Comput., 62 (2015), pp.~532--554.

\bibitem{toth2000b}
{\sc G.~T{\'o}th}, {\em The {$\nabla \cdot \bd B = 0$} constraint in
  shock-capturing magnetohydrodynamics codes}, J. Comput. Phys., 161 (2000),
  pp.~605--652.

\bibitem{waagan2009positive}
{\sc K.~Waagan}, {\em A positive {MUSCL-Hancock} scheme for ideal
  magnetohydrodynamics}, J. Comput. Phys., 228 (2009), pp.~8609--8626.

\bibitem{waagan2011robust}
{\sc K.~Waagan, C.~Federrath, and C.~Klingenberg}, {\em A robust numerical
  scheme for highly compressible magnetohydrodynamics: {N}onlinear stability,
  implementation and tests}, J. Comput. Phys., 230 (2011), pp.~3331--3351.

\bibitem{WuSINUM2018}
{\sc K.~Wu}, {\em Positivity-preserving analysis of numerical schemes for ideal
  magnetohydrodynamics}, SIAM J. Numer. Anal., 56 (2018), pp.~2124--2147.

\bibitem{wu2023provably}
{\sc K.~Wu, H.~Jiang, and C.-W. Shu}, {\em Provably positive central
  discontinuous {Galerkin} schemes via geometric quasilinearization for ideal
  {MHD} equations}, SIAM J. Numer. Anal., 61 (2023), pp.~250--285.

\bibitem{wu2018provably}
{\sc K.~Wu and C.-W. Shu}, {\em A provably positive discontinuous {Galerkin}
  method for multidimensional ideal magnetohydrodynamics}, SIAM J. Sci.
  Comput., 40 (2018), pp.~B1302--B1329.

\bibitem{wu2019provably}
{\sc K.~Wu and C.-W. Shu}, {\em Provably positive high-order schemes for ideal
  magnetohydrodynamics: analysis on general meshes}, Numer. Math., 142 (2019),
  pp.~995--1047.

\bibitem{wu2023geometric}
{\sc K.~Wu and C.-W. Shu}, {\em Geometric quasilinearization framework for
  analysis and design of bound-preserving schemes}, SIAM Rev., 65 (2023),
  pp.~1031--1073.

\bibitem{wu2016physical}
{\sc K.~Wu and H.~Tang}, {\em Physical-constraint-preserving central
  discontinuous {Galerkin} methods for special relativistic hydrodynamics with
  a general equation of state}, Astrophys. J. Suppl. S., 228 (2016), p.~3.

\bibitem{WuTangM3AS}
{\sc K.~Wu and H.~Tang}, {\em Admissible states and
  physical-constraints-preserving schemes for relativistic magnetohydrodynamic
  equations}, Math. Mod. Meth. Appl. S., 27 (2017), pp.~1871--1928.

\bibitem{xiong2016parametrized}
{\sc T.~Xiong, J.-M. Qiu, and Z.~Xu}, {\em Parametrized positivity preserving
  flux limiters for the high order finite difference {WENO} scheme solving
  compressible {Euler} equations}, J. Sci. Comput., 67 (2016), pp.~1066--1088.

\bibitem{xu2014parametrized}
{\sc Z.~Xu}, {\em Parametrized maximum principle preserving flux limiters for
  high order schemes solving hyperbolic conservation laws: one-dimensional
  scalar problem}, Math. Comp., 83 (2014), pp.~2213--2238.

\bibitem{xu2016new}
{\sc Z.~Xu and Y.~Liu}, {\em New central and central discontinuous {Galerkin}
  schemes on overlapping cells of unstructured grids for solving ideal
  magnetohydrodynamic equations with globally divergence-free magnetic field},
  J. Comput. Phys., 327 (2016), pp.~203--224.

\bibitem{zhang2017positivity}
{\sc X.~Zhang}, {\em On positivity-preserving high order discontinuous
  {Galerkin} schemes for compressible {Navier--Stokes} equations}, J. Comput.
  Phys., 328 (2017), pp.~301--343.

\bibitem{zhang2010maximum}
{\sc X.~Zhang and C.-W. Shu}, {\em On maximum-principle-satisfying high order
  schemes for scalar conservation laws}, J. Comput. Phys., 229 (2010),
  pp.~3091--3120.

\bibitem{zhang2010positivity}
{\sc X.~Zhang and C.-W. Shu}, {\em On positivity-preserving high order
  discontinuous {Galerkin} schemes for compressible {Euler} equations on
  rectangular meshes}, J. Comput. Phys., 229 (2010), pp.~8918--8934.

\end{thebibliography}

\end{document}